\colorlet{Mycolor1}{green!10!orange!90!}
\newcommand{\labeltext}[2]{%
	\@bsphack
	\csname phantomsection\endcsname 
	\def\@currentlabel{#1}{\label{#2}}%
	\@esphack
}
\title[Fast Approximation: Beyond the locally constant case]{Fast approximation of Lyapunov exponents: Beyond the locally constant case}
\author{Mark Piraino}
\address{Department of Mathematics \\
	Northwestern University\\
	2033 Sheridan Road Evanston, IL 60208}
\email{mark.piraino@northwestern.edu}
\thanks{M.P. was supported in part by the National Science Foundation grant ``RTG: Analysis on manifolds" at Northwestern University}
\date{\today}
\theoremstyle{definition}
\newtheorem{theorem}{Theorem}[section]
\newtheorem{lemma}[theorem]{Lemma}
\newtheorem{proposition}[theorem]{Proposition}
\newtheorem{definition}[theorem]{Definition}
\newtheorem*{notation}{Notation}
\newtheorem*{remark}{Remark}
\newtheorem*{hyp}{Hypothesis}
\newcommand{\pmat}[1]{\begin{pmatrix} #1 \end{pmatrix}}
\newcommand{\inn}[1]{\left\langle #1 \right\rangle}
\newcommand{\set}[1]{\left\{ #1 \right\}}
\newcommand{\abs}[1]{\left| #1 \right|}
\newcommand{\norm}[1]{\left \| #1 \right \|}
\newcommand{\ol}[1]{\overline{#1}}
\def\[#1\]{\begin{align*}#1\end{align*}}
\newcommand{\floor}[1]{\lfloor #1 \rfloor}
\DeclareMathOperator{\spn}{span}
\DeclareMathOperator{\var}{var}
\DeclareMathOperator{\tr}{tr}
\DeclareMathOperator{\op}{op}
\DeclareMathOperator{\ran}{Ran}
\DeclareMathOperator{\interior}{int}
\DeclareMathOperator{\lip}{Lip}
\DeclareMathOperator{\dist}{dist}
\DeclareMathOperator{\diam}{diam}
\DeclareMathOperator{\tope}{top}
\DeclareMathOperator{\per}{per}
\newcommand{\C}{\mathbb{C}}
\newcommand{\R}{\mathbb{R}}
\newcommand{\RP}{\mathbb{R}\mathbb{P}}
\newcommand{\Z}{\mathbb{Z}}
\def\B{\mathcal{B}}     
\def\A{\mathcal{A}}    
\def\H{\mathcal{H}}   
\def\K{\mathcal{K}}             
\def\T{\mathbb{T}}  
\def\L{\mathcal{L}}      
\def\S{\Sigma}
\def\T{\mathcal{T}}
\newcommand{\e}{\varepsilon}
\begin{document}
	
\maketitle

\begin{abstract}
	We study the problem of estimating the maximal Lyapunov exponent of dominated cococycles. In particular we are concerned with cocycles over Gibbs states on shifts of finite type for which both the function defining the cocycle and the potential defining the Gibbs state may depend on infinitely many coordinates but are still very regular. We show that when the $n$th variation of both the cocycle and the potential is $O(e^{-cn^{2}})$ for some $c>h_{\text{top}}$ then using periodic points of period less then $n$ the Lyapunov exponent can be approximated to an accuracy $O(n^{-kn})$ for some explicit $k>0$.
\end{abstract}

\section{Introduction}

In 1973 Kingman described the problem of computing Lyapunov exponents as having the pride of place among the unsolved problems of subadditive ergodic theory \cite{MR356192}. While in the intervening years there have been significant results making progress on estimating Lyapunov exponents for random products of positive matrices (\cite{MR2651384}, \cite{MR4014663}, \cite{wang2020pollicotts}) a complete and general theory remains elusive. One direction for which relatively little progress has been made is on computing Lyapunov exponents for cocycles generated by functions which are not locally constant. The goal of this paper is to generalize a well known algorithm for approximating Lyapunov exponents for IID random products of positive matrices due to Pollicott \cite{MR2651384} to this case. Our work generalizes that of Pollicott in two significant directions first our result applies to cocycles generated by functions which are not locally constant and second our result applies to processes which are not IID (and more generally not Markov).

Before we state our results we briefly recall some of the background material we will need. We will work exclusively on one-sided shifts of finite type. So let us recall some notation and definitions. First Given a $0,1$-matrix $T$ we define\
\[ \S_{T}^{+} = \set{(x_{i})_{i=0}^{\infty}: T_{x_{i}x_{i+1}} = 1 \text{ for all }i\geq 0} \]
The usual action on this space is the shift $\sigma((x_{i})_{i=0}^{\infty}) = (x_{i+1})_{i=0}^{\infty}$. It is well known that $\S_{T}^{+}$ can be made into a compact metric space, we will discuss different metrics which we can put on $\S_{T}^{+}$ in section \ref{sec:Bspace}.

Next let us briefly recall a small amount of background on Lyapnuov exponents. Given a shift of finite type $\S_{T}^{+}$ a shift invariant probability measure $\mu$ and a function $\A:\S_{T}^{+} \to GL_{d}(\R)$ we define the maximal Lyapunov exponent of $\A$ over $\mu$ to be the quantity
\[ \gamma_{1}(\A , \mu) = \lim_{n \to \infty}\frac{1}{n}\int_{\S_{T}^{+}} \log \norm{\A(\sigma^{n-1}x)\cdots \A(\sigma x)\A(x)}d\mu. \]
We will also use the notation 
\[ \A^{(n)}(x) = \A(\sigma^{n-1}x)\cdots \A(\sigma x)\A(x). \]
This quantity plays an important role in a number of areas of mathematics. The most prominent being smooth dynamics in which $\A$ is the derivative of smooth map and the maximal Lyapunov exponent describes the rate at which certain orbits diverge from one and other. It is also known that entropy rate of a hidden Markov processes can be expressed as a Lyapunov exponent for a suitable locally constant function $\A$.

We will work with functions $\A$ which are both very regular (more regular then H\"older) and also dominated in the sense of \ref{def:dominated}. For the purpose of the introduction the reader is free to think of $\A$ being dominated as $\A(x)$ being a positive matrix for all $x \in \S_{T}^{+}$. Furthermore our results will apply to measures which are Gibbs states for potentials which are also very regular (but not necessarily locally constant).

By empirical investigation it is known that even in the simplest case in which $\A$ is locally constant and $\mu$ is a Bernoulli measure the quantity 
\[ \frac{1}{n}\int_{\S_{T}^{+}} \log \norm{\A(\sigma^{n-1}x)\cdots \A(\sigma x)\A(x)}d\mu \]
(which can be computed explicitly in this case) converges to $\gamma_{1}(\A, \mu)$ at a very slow rate. The key to Pollicott's algorithm is to use a different characterization of $\gamma_{1}(\A , \mu)$ based on thermodynamic formalism. For a more complete background on thermodynamic formalism we refer the reader to \cite{bowen1975equilibrium} or \cite{MR1085356}. For our purpose it is enough to understand the following intuition. Suppose that we can find a function $\varphi$ for which $\mu$ is the unique shift invariant probability measure which maximizes $h_{\nu}(\sigma) + \int \varphi d \nu$ (here $h_{\nu}(\sigma)$ is the measure theoretic entropy of the shift map) over the space of $\sigma$ invariant probability measures. Such a measure is called an equilibrium state for $\varphi$ (or when $\varphi$ is sufficiently regular, such as in this paper, a Gibbs state). These measures generalize Markov measures in the sense that a measure is Markov exactly when $\varphi$ can be taken locally constant. Provided we have such a $\varphi$ we can define the function
\[ P(\beta) = \sup\set{h_{\nu}(\sigma) + \int \varphi d\nu + \beta \gamma_{1}(\A , \nu) : \nu \text{ shift invaraint probability}} \]
and realize that
\[ P'(0) = \gamma_{1}(\A , \mu). \]
In fact there is other dynamically relevant information contained in $P(\beta)$ such as variances for certain central limit theorems. A similar method has been used to computed other interesting quantities such as Hausdorff dimensions of dynamically relevant sets \cite{MR1855840}, \cite{MR3742587}, \cite{morris2018fast}. It is well known that if $\varphi$ is regular enough and $\A$ satisfies certain properties then function $P(\beta)$ can also be described as $P(\beta) = \log \rho(\L_{\beta})$ where $\rho(\L_{\beta})$ is the spectral radius of the following operator
\[ \L_{\beta}f(x,z) = \sum_{i:ix \in \S_{T}^{+}}e^{\varphi(ix)}\norm{\A(ix)\frac{u}{\norm{u}}}^{\beta}f(ix, \ol{\A(ix)u}). \]
$\L_{\beta}$ naturally acts on function on the set $\S_{T}^{+}\times \RP^{d-1}$. The major technical result that we will prove is the following.

\begin{theorem}\label{thm:eigenvalues}
	Let $\S_{T}^{+}$ be the shift of finite type defined by the matrix $T$, $\varphi:\S_{T}^{+} \to \R$, and $\A:\S_{T}^{+} \to GL_{d}(\R)$ be such that $\A(\S_{T}^{+})$ is dominated and
	\[ \var_{n}\varphi, \var_{n}\A = O(e^{-cn^{p}}). \]
	There is a Banach space $\B$ dense in a quotient of $C(\S_{T}^{+}\times \RP^{d-1})$ such that for any $\beta$ $\L_{\beta}$ acts on $\B$ and the following are true.
	\begin{enumerate}
		\item
		If $p>1$ then $\L_{\beta}$ is compact.
		
		\item 
		If $p=2$ then
		\[ \abs{\lambda_{n}(\L_{\beta})} = O(n^{-k}) \]
		where $\set{\lambda_{n}(\L_{\beta})}$ is the sequence of eigenvalues of $\L_{\beta}$ listed in descending order of modulus with algebraic multiplicity and
		\[ k=\frac{c}{h_{\tope}}-\frac{1}{2}. \]
	\end{enumerate}
\end{theorem}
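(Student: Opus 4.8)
The plan is to realise $\L_\beta$ as an operator-norm limit of finite rank operators, the finite rank pieces being obtained by truncating the dependence of both the symbol and of the functions in $\B$ simultaneously in the base variable $x$ and in the fibre variable $u\in\RP^{d-1}$, and then to read off the eigenvalue decay from the resulting trade-off between rank and accuracy via a Weyl--Carl type inequality. Domination is what makes the fibre truncation work: it forces every branch map $u\mapsto\ol{\A(ix)u}$ to be a genuine contraction of $\RP^{d-1}$, so that these maps extend holomorphically to a fixed complex neighbourhood $U\supset\RP^{d-1}$ in $\C\P^{d-1}$ and carry $U$ into a fixed $V\Subset U$. The super-exponential decay $\var_n\varphi,\var_n\A=O(e^{-cn^p})$ is what controls the base truncation.

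First I would let $\B$ be the space of functions $f(x,u)$ that for each $x$ extend to a bounded holomorphic function of $u$ on $U$, and whose base variations satisfy $\var_nf=O(e^{-cn^p})$; the norm $\norm{f}_\B$ combines the weighted seminorm $\sup_n e^{cn^p}\var_nf$ with the holomorphic sup norm in $u$. Since $\B$ contains every function that is locally constant in $x$ and polynomial in $u$, it is dense in the relevant quotient of $C(\S_T^+\times\RP^{d-1})$ (one quotients out the directions that do not see the projective action). The next step is to check that $\L_\beta$ acts boundedly on $\B$: the symbol $e^{\varphi(ix)}\norm{\A(ix)u/\norm{u}}^{\beta}$ lies in the same regularity class (holomorphic on $U$ in $u$ by domination, with $\var_n=O(e^{-cn^p})$ in $x$), pre-composition with the branch maps improves the fibre domain from $U$ to $V$, and although applying $\L_\beta$ costs one base coordinate --- so that $\var_n(\L_\beta f)$ is governed by $\var_{n+1}f$ and $\var_{n+1}(\text{symbol})$ --- the weight ratio $e^{c((n+1)^p-n^p)}$, which tends to infinity precisely because $p>1$, absorbs this loss.

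For the compactness statement when $p>1$, I would approximate $\L_\beta$ by the operator $\L_\beta^{(\ell,m)}$ obtained by (i) replacing $\varphi,\A$ by their level-$m$ locally constant approximations; (ii) truncating the Taylor expansion in $u$ of the output at degree $\ell$, which is cheap --- the error is $O(\theta^\ell)$ for a fixed $\theta<1$ --- because by domination $\L_\beta f(x,\cdot)$ is holomorphic on all of $U$ while only its restriction to $\RP^{d-1}$ is needed; and (iii) freezing the remaining base dependence (including the coupling through $\ol{\A(ix)u}$) at level $m$. This operator has rank $\asymp\#\set{\text{admissible }m\text{-words}}\cdot\ell^{d-1}\asymp e^{mh_{\tope}}\ell^{d-1}$, and $\norm{\L_\beta-\L_\beta^{(\ell,m)}}_{\B}=O(\theta^{\ell})+O\!\left(e^{-c(m^p-(m-1)^p)}\right)$, which tends to $0$ as $\ell,m\to\infty$ for every $p>1$; hence $\L_\beta$ is compact.

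Finally, for $p=2$, take $\ell\asymp m$ so that $\theta^\ell\lesssim e^{-cm}$; then $\L_\beta^{(m)}:=\L_\beta^{(\ell(m),m)}$ has rank $N_m\asymp e^{mh_{\tope}}$ up to polynomial factors, and the error estimate above, once the accumulation over the $\asymp N_m$ cylinders has been accounted for in $\norm{\cdot}_\B$, gives $a_{N_m}(\L_\beta)\lesssim e^{-\kappa m}$ with $\kappa$ such that the approximation numbers obey $a_n(\L_\beta)\lesssim n^{-k}$ for $k=\tfrac{c}{h_{\tope}}-\tfrac12$; the hypothesis $c>h_{\tope}$ guarantees $k>0$ and the convergence of the relevant series. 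A Weyl-type inequality relating approximation numbers to eigenvalues on a Banach space (Carl's inequality, $\prod_{j\le n}\abs{\lambda_j(\L_\beta)}\le 2^{n/2}\prod_{j\le n}a_j(\L_\beta)$, together with $a_j\lesssim j^{-k}$) then yields $\abs{\lambda_n(\L_\beta)}=O(n^{-k})$. I expect step (iii) to be the main obstacle: the shift is expanding in $x$, so $\L_\beta$ supplies no smoothing there, and the fibre dependence leaks into the base variation through the $x$-dependence of the branch maps $\ol{\A(ix)u}$; disentangling this, while using domination correctly to pin down the fixed contracting neighbourhood $U\supset V$ and to get the error exactly at the level $e^{-c(m^p-(m-1)^p)}$, is the heart of the argument.
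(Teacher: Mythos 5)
Your construction is essentially the paper's: the same Banach space (holomorphic in the fibre variable on the Morris domain $U$, $\kappa_n$-Lipschitz in the base with $\kappa_n=e^{-cn^p}$), the same two-directional finite-rank approximation (level-$m$ locally constant in $x$, finite-dimensional in the fibre --- the paper truncates the singular value expansion of the Bergman restriction $J:A^2(U)\to A^2(W)$ where you truncate Taylor expansions, a cosmetic difference since both fibre ranks are polynomial in $m$), and the same key mechanism: since $\L_\beta$ does not smooth in $x$, the operator-norm error sits at the level of the one-step ratio $\kappa_{k+1}/\kappa_k$, i.e.\ $e^{-c(m^p-(m-1)^p)}$, which you correctly single out as the heart of the matter.

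The genuine gap is in the last step. The inequality you invoke, $\prod_{j\le n}\abs{\lambda_j(\L_\beta)}\le 2^{n/2}\prod_{j\le n}a_j(\L_\beta)$, is not available on a general Banach space: the Weyl-type inequality that holds here carries the factor $n^{n/2}$, and the paper explicitly notes (with a reference) that this factor is optimal, so it cannot be replaced by $2^{n/2}$. The $-\tfrac12$ in $k=\tfrac{c}{h_{\tope}}-\tfrac12$ exists precisely to absorb that factor, and your constants only come out right because of the unavailable inequality: from your claimed bound $a_n(\L_\beta)\lesssim n^{-(c/h_{\tope}-1/2)}$, the correct inequality yields only $\abs{\lambda_n(\L_\beta)}=O(n^{-(c/h_{\tope}-1)})$, short of the statement. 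There is a second, related slip in the bookkeeping: you stop the fibre truncation at accuracy $\theta^\ell\lesssim e^{-cm}$, so your total error is $e^{-cm}$ rather than the base-truncation level $e^{-c(m^2-(m-1)^2)}\asymp e^{-2cm}$; with rank $\asymp e^{m(h_{\tope}+\e)}$ this gives only $a_n\lesssim n^{-c/(h_{\tope}+\e)}$, and no choice of $\e>0$ recovers the stated exponent after the $n^{n/2}$ loss. Both problems are fixable inside your framework: take $\ell\asymp m$ with a constant large enough that $\theta^\ell\lesssim e^{-2cm}$, so the base error dominates and $a_n(\L_\beta)\lesssim n^{-2c/(h_{\tope}+\e)}$; then the Weyl inequality with the $n^{n/2}$ factor gives $\abs{\lambda_n(\L_\beta)}=O\bigl(n^{1/2-2c/(h_{\tope}+\e)}\bigr)$, which is the claimed bound upon taking $\e=h_{\tope}$ --- exactly how the paper concludes.
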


Having a quantitative bound on the rate of convergence of the eigenvalues of $\L_{\beta}$ to $0$ gives allows us to define a determinant for $\L_{\beta}$ which gives us an implicit description of $\rho(\L_{\beta})$ and hence we can use implicit differentiation to produce a formula for $\gamma_{1}(\A, \mu)$. In addition we can use Theorem \ref{thm:eigenvalues} to bound the Taylor coefficients of this determinant the result of this process is the following theorem which is the main result of this paper.

\begin{theorem}\label{thm:mainintro}
	Let $\S_{T}^{+}$ be the shift of finite type defined by an irreducible matrix $T$, $g:\S_{T}^{+} \to \R$ be a $g$-function, $\A:\S_{T}^{+} \to GL_{d}(\R)$ be such that $\A(\S_{T}^{+})$ is dominated and
	\[ \var_{n}\log g, \var_{n}\A = O(e^{-cn^{2}})\text{ where }c > h_{\tope}. \]
	Then for each $n\geq 1$ there exists an approximation $\gamma_{1}^{(n)}(\A,\mu_{\varphi})$ computable using the values of $g$ and $\A$ at periodic points of period $\leq n$ and
	\[ \abs{\gamma_{1}^{(n)}(\A,\mu_{g}) - \gamma_{1}(\A,\mu_{g})} = O(n^{-nk}) \]
	where $\mu_{g}$ is the unique $g$ measure for $g$ and
	\[ k = \frac{2c - h_{\tope}}{4 h_{\tope}}. \]
\end{theorem}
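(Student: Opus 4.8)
The plan is to turn the spectral information from Theorem~\ref{thm:eigenvalues} into a convergent approximation scheme for $\gamma_1(\A,\mu_g)$ via a dynamical determinant, following the broad strategy of Pollicott and of Morris \cite{morris2018fast} but keeping careful track of the stretched-exponential bounds. First I would set up the thermodynamic picture: since $g$ is a $g$-function with $\var_n\log g = O(e^{-cn^2})$, the associated potential $\varphi = \log g$ has the same variation decay and $\mu_g = \mu_\varphi$ is its unique Gibbs state, so that $P(\beta) = \log\rho(\L_\beta)$ and $P'(0) = \gamma_1(\A,\mu_g)$ by the thermodynamic characterization recalled in the introduction. By Theorem~\ref{thm:eigenvalues}(1) (here $p=2>1$) the operator $\L_\beta$ is compact on $\B$, so we may form the Fredholm-type determinant $d(\beta,z) = \det(I - z\L_\beta) = \exp\!\left(-\sum_{m\geq 1} \tfrac{z^m}{m}\tr \L_\beta^m\right)$, an entire function of $z$ whose smallest zero is $z = \rho(\L_\beta)^{-1}$. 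Differentiating the relation $d(\beta,\rho(\L_\beta)^{-1})=0$ implicitly in $\beta$ at $\beta=0$ expresses $P'(0)$, and hence $\gamma_1$, in terms of the Taylor coefficients of $d$; the $n$th approximation $\gamma_1^{(n)}$ is obtained by truncating the power series in $z$ at order $n$ and using that $\tr \L_\beta^m$ is a sum over periodic points of period $m$ (via a Ruelle-type trace formula for transfer operators on the projectivized bundle), so that only values of $g$ and $\A$ at periodic points of period $\leq n$ enter.

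The two quantitative inputs are: (a) an estimate on the Taylor coefficients of $d(\beta,z)$, i.e. on the approximation numbers / eigenvalues of $\L_\beta$, and (b) a bound on how far the truncated determinant's smallest zero sits from the true one. For (a) I would use Theorem~\ref{thm:eigenvalues}(2): with $|\lambda_n(\L_\beta)| = O(n^{-K})$ where $K = c/h_{\tope} - 1/2$, the coefficient of $z^m$ in $d(\beta,z)$, being up to sign the $m$th elementary symmetric function of the eigenvalues, is bounded by roughly $\prod_{j=1}^m |\lambda_j| = O\!\big((m!)^{-K}\big)$, which by Stirling decays like $e^{-Km\log m}$. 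This is the crucial step where the $n^2$ variation hypothesis (rather than mere polynomial or exponential) pays off: it is exactly what makes the determinant's coefficients superexponentially small and the truncation error $O(n^{-nk})$ rather than merely $O(\theta^n)$. Standard analytic perturbation of the zero of an entire function with such rapidly decaying coefficients (as in \cite{morris2018fast}, or Jenkinson--Pollicott \cite{MR1855840}) then gives that the smallest zero of the degree-$n$ truncation approximates $\rho(\L_\beta)^{-1}$ with error of the same superexponential order, and the halving of the exponent to $k = (2c-h_{\tope})/(4h_{\tope})$ comes from the implicit differentiation step (one loses roughly a factor of $2$ passing from $d(\beta,z)$ to its derivatives in $\beta$, and from combining the radius-of-convergence estimate with the location of the zero).

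For (b) and the final assembly I would write $\gamma_1^{(n)}$ explicitly as the value at $z = z_n(0)$ (the smallest zero of the truncated $d_n(0,z)$) of the implicitly-differentiated formula, control $|\partial_\beta d|$ and $|\partial_z d|$ uniformly near $(\beta,z)=(0,\rho(\L_0)^{-1})$ using the coefficient bounds (these are needed to invoke the implicit function theorem with quantitative constants), and combine the error from replacing $d$ by $d_n$ with the error from replacing the exact zero by $z_n(0)$. Both errors are $O(n^{-nk})$ by the coefficient estimate, so the triangle inequality finishes the proof. The main obstacle I anticipate is (a): establishing the trace formula $\tr \L_\beta^m = \sum_{\sigma^m x = x} (\text{weight involving } g, \A, \text{and the projective action})$ rigorously on the Banach space $\B$ — one must check that $\L_\beta$ is nuclear/trace-class there, that the trace is given by the naive periodic-point sum (the projective fiber direction requires handling the derivative of the projective action at fixed points, contributing a factor like $|\det|$ over a ratio of singular-value-type quantities), and that these periodic-point weights are genuinely computable from finitely many coordinates — a subtlety precisely because $\varphi$ and $\A$ are not locally constant, so one is really evaluating at the periodic orbit rather than at a cylinder, and must argue the stretched-exponential variation makes the ``depends on infinitely many coordinates'' issue harmless at the claimed accuracy.
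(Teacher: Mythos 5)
Your overall architecture matches the paper's: characterize $\gamma_{1}$ via $\frac{d}{d\beta}\rho(\L_{\beta})|_{\beta=0}$, build a determinant from the spectral data of Theorem \ref{thm:eigenvalues}, express its Taylor coefficients through traces of powers of $\L_{\beta}$ and hence through periodic points, and get the $O(n^{-nk})$ rate from superexponential decay of those coefficients. But the two load-bearing steps are exactly the ones you only flag as ``anticipated obstacles,'' and one assertion you make in their place is wrong: compactness of $\L_{\beta}$ on the Banach space $\B$ does \emph{not} allow you to form $\det(I-z\L_{\beta})=\exp(-\sum_{m}z^{m}\tr\L_{\beta}^{m}/m)$; on a general Banach space a compact (even nuclear) operator need not admit a spectral trace, and Lidskii-type identities fail. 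The paper's mechanism is the approximation-number estimate of Theorem \ref{thm:approxnumbers}: for $p=2$ one gets $a_{n}(\L_{\beta})=O(n^{-c/h_{\tope}})$, which is summable precisely because $c>h_{\tope}$, so $\L_{\beta}$ lies in the ideal $\mathfrak{L}_{1}^{(a)}$ where Pietsch's theory provides a unique continuous \emph{spectral} trace and an associated Fredholm denominator. This is the actual role of the hypothesis $c>h_{\tope}$, which your proposal instead attributes only to the size of the Taylor coefficients. Likewise, the periodic-point trace formula for non-locally-constant $\varphi,\A$ is not obtained by a direct kernel computation: the paper proves it by replacing $\varphi,\A$ with their locally constant discretizations $\varphi_{m},\A_{m}$ (where the Ruelle--Morris formula is known), showing $\L_{\varphi_{m},\A_{m},\beta}^{n}\to\L_{\beta}^{n}$ in operator norm (Lemma \ref{lem:convergence}), upgrading this to convergence in the $\sum_{k}a_{k}(\cdot)$ norm, and invoking continuity of the trace on $\mathfrak{L}_{1}^{(a)}$ (Proposition \ref{prop:tracecontinuity}). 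Without some such argument your proof has a genuine hole at the point where the determinant and the periodic-point data are first connected. (By contrast, the ``infinitely many coordinates'' worry you raise at the end is not an issue: a periodic point of period $n$ is determined by its first $n$ symbols, and the theorem only asks for evaluation of $g$ and $\A$ at such points.)

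Two secondary discrepancies. First, the halving of the exponent to $k=(2c-h_{\tope})/(4h_{\tope})$ does not come from the implicit differentiation in $\beta$ (that step costs only a constant, via Cauchy's integral formula and uniformity of the bounds in $\beta\leq 1$); in the paper it comes from converting the eigenvalue decay into a coefficient bound through the order of the entire function $\prod_{n}(1+\zeta\lambda_{n})$, where passing from the order estimate to $\abs{\alpha_{n}}\leq n^{-n/(k^{-1}+\e)}$ with the paper's choice of $\e$ produces the factor $2$. Your heuristic $\abs{\alpha_{m}}\lesssim\prod_{j\leq m}\abs{\lambda_{j}}$ would also need to be replaced by such an argument (the elementary symmetric function is a large sum, and the naive bound $\zeta$-function-style only gives $(m!)^{-1}$-type decay, not $(m!)^{-K}$). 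Second, the paper never locates the smallest zero of a truncated determinant: because $g$ is a $g$-function, $\rho(\L_{0})=1$ is known exactly, and the approximation is simply the ratio of the two truncated series $\sum_{i\leq n}(-1)^{i}\alpha_{i}'(0)$ and $\sum_{i\leq n}(-1)^{i}i\,\alpha_{i}(0)$; your zero-perturbation step is extra work (requiring a quantitative simplicity/nondegeneracy estimate at the zero) that the $g$-function normalization is introduced specifically to avoid.
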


\begin{remark}
	A $g$-function is a continuous function $g:\S_{T}^{+} \to \R$ such that $g(x)>0$ for all $x \in \S_{T}^{+}$ and
	\[ \sum_{i:ix \in \S_{T}^{+}}g(ix) =1 \]
	for all $x \in \S_{T}^{+}$. For those unfamiliar with $g$-functions we refer the reader to \cite{MR1085356} we comment that the measure $\mu_{g}$ in the previous theorem is also the unique Gibbs state for $\log g$. We also remark that in practice we only use the fact that $\mu_{g}$ is a $g$ measure to ensure that $P(0)=0$. In practice we require only that we are able to compute $P(0)$.
\end{remark}

The paper is organized in the following way. In section \ref{sec:prelim} we collect some results from the literature that we will need in our proof, in section \ref{sec:Bspace} we define the space $\B$ in Theorem \ref{thm:eigenvalues} and show that $\L_{\beta}$ acts on it, in section \ref{sec:approx} we prove Theorem \ref{thm:eigenvalues}, in section \ref{sec:traceformula} we establish a formula for the trace of $\L_{\beta}$, in section \ref{sec:determinant} we prove Theorem \ref{thm:mainintro}. A number of technical results are postponed to the appendix (Section \ref{sec:appendix}) so that they do not unnecessarily interrupt the narrative follow of the proofs.

\section{Preliminaries}\label{sec:prelim}

\begin{definition}\label{def:dominated}
	We will say that a set $\mathrm{A} \subset M_{d}(\R)$ is \emph{dominated} if there exists $C, k >0$ such that 
	\[ \sup_{A_{1},A_{2},\cdots A_{n} \in \mathrm{A}}\frac{\sigma_{2}(A_{n}\cdots A_{2}A_{1})}{\sigma_{1}(A_{n}\cdots A_{2}A_{1})} \leq C e^{-k n} \]
	for all $n \geq 1$. Where $\sigma_{1}(A),\sigma_{2}(A)$ are the first and second singular values of $A$ respectively. 
\end{definition}

\begin{definition}
	Let $\mathsf{A} \subseteq M_{d}(\R)$ be nonempty. We say that $(\K_{1},\ldots, \K_{m})$ is a \emph{multicone} for $A$ if the following properties hold. 
	\begin{enumerate}
		\item 
		Each $\K_{i}$ is a closed, convex subset of $\R^{d}$ with nonempty interior such that $\lambda \K_{i} \subseteq \K_{i}$ for all $\lambda \geq 0$.
		
		\item 
		There exists a unit vector $w \in \R^{d}$ such that $\inn{u,w}>0$ for all nonzero vectors $v \in \bigcup_{i}\K_{i}$.
		
		\item 
		For all $A \in \mathsf{A}$ and $j \in \set{1,\ldots, m}$ there exists $\ell = \ell(j, A) \in \set{1, \ldots, m}$ such that $A(K_{i}\setminus \set{0})\subseteq (\interior \K_{\ell}) \cup (-\interior \K_{\ell})$.
		
		\item 
		For all $i,j \in \set{1, \ldots, m}$ with $i\neq j$ $\K_{i} \cap \K_{j} = \set{0}$.
	\end{enumerate}
	The vector $w$ is called the \emph{transverse defining vector} for $(\K_{1}, \ldots, \K_{m})$. If there exists a multicone for $\mathsf{A}$ then $\mathsf{A}$ is said to be \emph{multipositive}.
\end{definition}

It is known that a compact set $\mathrm{A} \subset GL_{d}(\R)$ is dominated if and only if it is multipositive for a proof we refer the reader to \cite{morris2018fast}. Given a compact multipositive set we can extend the projective action of the matrices so that they map an open set in $\C$ strictly inside itself.

\begin{theorem}[Morris \cite{morris2018fast}]\label{thm:conesmorris}
	Let $d \geq 1$, let $\mathsf{A} \subseteq M_{d}{(\R)}$ be compact and nonempty and suppose that $(\K_{1}, \ldots,. \K_{m})$ is a multicone for $\mathsf{A}$ with transverse defining vector $w$. For each $j = 1, \ldots, m$ define
	\[ \K_{j}^{\C}:= \set{\lambda ((u+v)+i(u-v)) : \lambda  \in \C \text{ and }u,v \in\K_{j}}, \]
	and let
	\[ U_{i} :=\set{z \in \C^{d}:z \in \K_{j}^{\C} \text{ and }\inn{z,w}=1}\text{ and }U:= \bigcup_{j=1}^{m}U_{j} \]
	For each $A \in \mathsf{A}$ and $z \in U$ write $\ol{A}z : = \inn{Az,w}^{-1}Az$. Then
	\begin{enumerate}
		\item 
		Every $A \in \mathcal{S}(\mathsf{A})$ has a simple leading eigenvalue $\lambda_{1}(A)$ which is real and strictly larger in modulus than all of the other eigenvalues of $A$.
		
		\item 
		There is a constant $\tau>0$ such that $\norm{A_{1}A_{2}} \geq \tau \norm{A_{1}}\norm{A_{2}}$ for all $A_{1},A_{2}\in \mathcal{S}(\mathsf{A})$.
		
		\item 
		$U$ is a nonempty, relatively open, bounded subset of the complex hyperplane $\set{x \in \C^{d}:\inn{z,w}=1}$, and for every $A \in \mathcal{S}(\mathsf{A})$ the math $\ol{A}:U \to U$ is well defined.
		
		\item 
		There exist constants $C,k>0$ such that 
		\[ \sup_{A_{1},A_{2}, \ldots, A_{n} \in \mathsf{A}}\diam\ol{A_{1}A_{2}\cdots A_{n}}(U) \leq C e^{-k n} \]
		for all $n\geq 1$.
		
		\item 
		For each $A \in \mathcal{S}(\mathsf{A})$ the map $\ol{A}$ has a unique fixed point $z_{A} \in U$. We have that $z_{A}$ is an eigenvector for the eigenvalue $\lambda_{1}(A)$. The eigenvalues of the derivative $D_{z_{A}}\ol{A}$ are precisely the numbers $\lambda_{j}(A)/\lambda_{1}(A)$ for $j=2, \ldots, d$, and in particular
		\[ \det(I-D_{z_{A}}\ol{A})=\frac{p_{A}'(\lambda_{1}(A))}{\lambda_{1}(A)^{d-1}}\neq 0 \]
		where $p_{A}(x) = \det(I-xA)$ is the characteristic polynomial and $p'(x)$ is its first derivative.
		
		\item \label{thm:conesmorris/conenorminequality}
		There  is a constant $C>0$ such that for each $A \in \mathcal{S}(\mathsf{A})$ and $z \in U$ we have that 
		\[  C^{-1}\norm{A}\leq \abs{\inn{Az,w}} \leq C \norm{A}. \]
		
		\item 
		The set $\bigcup_{A \in \mathcal{S}(\mathsf{A})}\ol{A}(U)$ is compactly contained in $U$.
		
		\item 
		The collection $\set{U_{j}}_{j=1}^{m}$ is disjoint.
	\end{enumerate}
\end{theorem}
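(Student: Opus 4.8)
The plan is to obtain everything from two successive applications of Birkhoff's theory of positive linear maps: first to the real multicone, then to its canonical complexification, in the spirit of the complex Perron--Frobenius theory of Rugh and Dubois. Write $\mathcal{S}(\mathsf{A})$ for the semigroup of all finite products of elements of $\mathsf{A}$. I would dispatch the purely geometric claims (3) and (8) first. Each $\K_{j}$ is a closed convex cone with nonempty interior, and the real-linear bijection $(u,v)\mapsto(u+v)+i(u-v)$ of $\R^{d}\times\R^{d}$ onto $\C^{d}$ carries $\interior\K_{j}\times\interior\K_{j}$ into $\K_{j}^{\C}$, so $\K_{j}^{\C}$ has nonempty interior and $U_{j}$ is relatively open in $\set{\inn{z,w}=1}$. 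Multicone axiom (2) together with compactness of the unit spheres of the $\K_{j}$ gives a constant $c>0$ with $\inn{u,w}\geq c\norm{u}$ for all $u\in\bigcup_{j}\K_{j}$; a short computation then yields $\inn{z,w}\neq0$ for every nonzero $z\in\bigcup_{j}\K_{j}^{\C}$ (so the slicing is legitimate) and $\norm{z}\leq(2/c)\abs{\inn{z,w}}$, hence $U$ is bounded and $\norm{z}\geq1/\norm{w}$ on $U$. For (8) one must use that $\K_{i}\cap\K_{j}=\set{0}$ does not automatically force $\K_{i}^{\C}\cap\K_{j}^{\C}=\set{0}$: the uniform transversality to $w$ makes the real and imaginary parts of any putative common nonzero element vanish.

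Next the dynamics. For $A\in\mathsf{A}$, axiom (3) gives $\ell(j,A)$ with $A(\K_{j}\setminus\set{0})\subseteq(\interior\K_{\ell})\cup(-\interior\K_{\ell})$, and composing index maps gives the analogous $L(j,A)$ for a product $A=B_{n}\cdots B_{1}\in\mathcal{S}(\mathsf{A})$; one checks directly that then $A(\K_{j}^{\C})\subseteq\K_{L}^{\C}$, with ``compactly inside'' inclusion in the complex sense whenever the real inclusion is strict, uniformly over $A\in\mathsf{A}$ by compactness. Since $\inn{Az,w}\neq0$ on $\K_{j}^{\C}\setminus\set{0}$, the map $\ol{A}z=\inn{Az,w}^{-1}Az$ is a well-defined self-map of $U$ with $\ol{BA}=\ol{B}\circ\ol{A}$, and from $\ol{A_{n}\cdots A_{1}}(U)=\ol{A_{n}}(\ol{A_{n-1}\cdots A_{1}}(U))\subseteq\ol{A_{n}}(U)$ together with the uniform ``compactly inside'' property of single generators one gets (7). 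The crux is (4): the complex gauge machinery shows that a linear map sending a real cone into the interior of another with real Hilbert diameter at most $\Delta$ sends the complexifications into one another and contracts the associated complex gauges by a fixed factor $\theta=\theta(\Delta)<1$; applying this to every generator $B\in\mathsf{A}$ with a uniform $\Delta$ (compactness) and iterating gives $\diam_{\text{gauge}}\ol{A_{1}\cdots A_{n}}(U)\leq\theta^{\,n-1}D$ for a uniform $D$, and comparability of the gauge metric with the Euclidean metric on the compact set $\bigcup_{B\in\mathsf{A}}\ol{B}(\ol{U})\Subset U$ yields (4) with $k=-\log\theta$.

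The eigenvalue statements follow. For fixed $A\in\mathcal{S}(\mathsf{A})$, $(\ol{A})^{N}=\ol{A^{N}}$ contracts and $\ol{A}(U)\Subset U$, so $\ol{A}$ has a unique attracting fixed point $z_{A}\in U$; if $z_{A}\in U_{j}$ then (8) and $\ol{A}z_{A}=z_{A}$ force $L(j,A)=j$, so $A$ sends $\K_{j}$ into $\pm\interior\K_{j}$, the real projective contraction on $\P\K_{j}$ has a unique fixed direction --- a \emph{real} eigenvector --- and by uniqueness it spans $z_{A}$, whence $\lambda_{1}(A):=\inn{Az_{A},w}\in\R$ and $Az_{A}=\lambda_{1}(A)z_{A}$. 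The derivative $D_{z_{A}}\ol{A}$ has gauge-operator-norm $<1$, hence spectral radius $<1$; linearizing the projective action of a linear map at one of its eigenvectors identifies the eigenvalues of $D_{z_{A}}\ol{A}$ with $\lambda_{j}(A)/\lambda_{1}(A)$, $j=2,\dots,d$, so $\lambda_{1}(A)$ is strictly dominant and therefore an algebraically simple root of the characteristic polynomial (claim (1)), and (5) is the elementary identity $\det(I-D_{z_{A}}\ol{A})=\prod_{j=2}^{d}(1-\lambda_{j}(A)/\lambda_{1}(A))$ rewritten via $p_{A}$.

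Finally (6) and (2). The bound $\abs{\inn{Az,w}}\leq\norm{A}\norm{z}\leq(2/c)\norm{A}$ is immediate; for the lower bound, $\norm{Az}=\abs{\inn{Az,w}}\,\norm{\ol{A}z}$ with $\norm{\ol{A}z}\geq1/\norm{w}$ reduces matters to $\norm{Az}\gtrsim\norm{A}=\sigma_{1}(A)$ uniformly on $U$, and this is exactly where domination enters: $\sigma_{2}(A)/\sigma_{1}(A)$ being uniformly exponentially small on $\mathcal{S}(\mathsf{A})$ forces $A$ to expand every direction not close to its weak subspace by $\asymp\sigma_{1}(A)$, while the cone geometry keeps $U$ uniformly away from that subspace. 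Claim (2) then follows from (6) via $\inn{A_{1}A_{2}z,w}=\inn{A_{2}z,w}\inn{A_{1}(\ol{A_{2}}z),w}$ and $\norm{A}\asymp\abs{\inn{Au,w}}$ for $u\in U$, with submultiplicativity of the norm for the reverse inequality. I expect the two genuinely technical points to be the complex gauge contraction lemma behind (4) and the uniform lower bound in (6); everything else is bookkeeping with the multicone axioms and compactness.
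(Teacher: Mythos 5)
This theorem is quoted from Morris's paper \cite{morris2018fast}; the present paper gives no proof of it and simply refers the reader to that reference, so there is nothing here to compare your argument against. At a high level your sketch does follow the strategy Morris himself uses --- canonical complexification of the Birkhoff multicone in the manner of Rugh and Dubois, a complex gauge-contraction lemma driving parts (4) and (7), and projective linearization at the unique fixed point for (1) and (5) --- so you have landed on the intended route rather than a genuinely different one.

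Two places where the sketch is looser than it should be. In (6) you route the lower bound through domination, arguing that $\sigma_{2}(A)/\sigma_{1}(A)$ is ``exponentially small'' and so $A$ expands every direction away from its weak singular subspace at scale $\sigma_{1}(A)$. But for a single matrix, or any product of bounded length, the domination estimate only gives $\sigma_{2}/\sigma_{1}\leq C e^{-kn}$, and $Ce^{-kn}$ may well be $\geq 1$; you would need a separate compactness argument for short products before the exponential decay is of any use, and you would also need to show that the weak singular subspace of every $A\in\mathcal{S}(\mathsf{A})$ stays uniformly transverse to $U$, which is not automatic. The theorem is stated in terms of a multicone rather than domination precisely so that (6) can be extracted directly from the cone geometry and the compactness of $\overline{U}$ without this detour, and that is how the cited proof proceeds. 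Secondly, in (5), the identity $\det(I-D_{z_{A}}\ol{A})=\prod_{j\geq 2}(1-\lambda_{j}(A)/\lambda_{1}(A))$ you invoke is right, but it matches the displayed formula $p_{A}'(\lambda_{1}(A))/\lambda_{1}(A)^{d-1}$ only under the convention $p_{A}(x)=\det(xI-A)$; with $p_{A}(x)=\det(I-xA)$ as written in the statement the algebra does not work out. This is almost certainly a misprint in the statement (the trace formula in the paper uses the same $p_{A}'(\lambda_{1})$ expression), but you should flag the discrepancy rather than assert (5) is simply ``the elementary identity rewritten via $p_{A}$.''
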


Take $K \subseteq U$ to be a compact set with $\bigcup_{A \in \mathcal{S}(\mathsf{A})}\ol{A}(U) \subseteq K$.

\begin{proposition}\label{prop:locallyconstantcones}
	Suppose that $\A:\S_{T}^{+}\to M_{d}(\R)$ is continuous and that $\A(\S_{T}^{+})$ is mulitpositive with multicone $(\K_{1}, \ldots, \K_{m})$. There exists an $N$ such that for any word $I$ with $\abs{I}=N$ and $i \in \set{1, \ldots , m}$ there exists a number $\ell = \ell(i,I) \in \set{1, \ldots, m}$ such that $\ol{\A(x)}U_{j} \subseteq U_{\ell}$ for all $x \in [I]$.
\end{proposition}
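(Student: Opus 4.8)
The plan is to split the statement into a pointwise fact and a uniformity fact. The pointwise fact is: for each individual matrix $A\in\A(\S_{T}^{+})$ and each $i\in\set{1,\ldots,m}$ there is a \emph{unique} index $\ell=\ell(i,A)$ with $\ol{A}(U_{i})\subseteq U_{\ell}$. To prove it, fix such $A$ and $i$. Property (3) of a multicone supplies some $\ell$ with $A(\K_{i}\setminus\set{0})\subseteq(\interior\K_{\ell})\cup(-\interior\K_{\ell})$. Property (2) forces each $\K_{j}$ to be pointed (it contains no line through the origin), so $\K_{i}\setminus\set{0}$ is convex, hence connected, while $\interior\K_{\ell}$ and $-\interior\K_{\ell}$ are disjoint open sets; therefore $A(\K_{i}\setminus\set{0})$ lies entirely in one of them. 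Writing an arbitrary $z\in U_{i}\subseteq\K_{i}^{\C}$ as $z=\lambda\bigl((u+v)+\sqrt{-1}\,(u-v)\bigr)$ with $\lambda\in\C$ and $u,v\in\K_{i}$, and replacing $(u,v,\lambda)$ by $(\pm Au,\pm Av,\pm\lambda)$ according to which side $A$ maps into (so that $\pm Au,\pm Av\in\K_{\ell}$), one reads off from the definition of $\K_{\ell}^{\C}$ that $Az\in\K_{\ell}^{\C}$; since $\K_{\ell}^{\C}$ is stable under multiplication by complex scalars and $\inn{\ol{A}z,w}=1$, this gives $\ol{A}z\in U_{\ell}$. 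Uniqueness of $\ell$ holds because the $U_{j}$ are pairwise disjoint (Theorem~\ref{thm:conesmorris}(8)) and $U_{i}\neq\emptyset$ (any interior vector of $\K_{i}$, rescaled to the hyperplane $\inn{\,\cdot\,,w}=1$, lies in $U_{i}$).

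Next I would prove the uniformity fact: $x\mapsto\ell(i,\A(x))$ is uniformly locally constant on $\S_{T}^{+}$. Use the compact set $K\subseteq U$ with $\ol{A}(U)\subseteq K$ for all $A$ in the semigroup, fixed right after Theorem~\ref{thm:conesmorris}; in particular $\ol{\A(x)}(U)\subseteq K$ for every $x$. Since $K$ is compact and $K\subseteq U=\bigsqcup_{j=1}^{m}U_{j}$ with each $U_{j}$ relatively open, the finitely many pairwise disjoint sets $K\cap U_{j}$ are each relatively open \emph{and} relatively closed in $K$, hence compact, so $\rho:=\min_{j\neq j'}\dist(K\cap U_{j},K\cap U_{j'})>0$. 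Now fix a point $z_{i}\in U_{i}$ for each $i$. The map $x\mapsto\A(x)$ is continuous on the compact space $\S_{T}^{+}$, and $A\mapsto\ol{A}z_{i}=\inn{Az_{i},w}^{-1}Az_{i}$ is continuous on the compact set $\A(\S_{T}^{+})$ — the denominator never vanishes there, since $\ol{A}$ is defined on all of $U$ for $A\in\A(\S_{T}^{+})$ by Theorem~\ref{thm:conesmorris}(3) (and is bounded away from $0$ by Theorem~\ref{thm:conesmorris}(\ref{thm:conesmorris/conenorminequality})). Hence $x\mapsto\ol{\A(x)}z_{i}$ is uniformly continuous with values in $K$, so there is $\delta>0$ with $d(x,x')<\delta\Rightarrow\abs{\ol{\A(x)}z_{i}-\ol{\A(x')}z_{i}}<\rho$ for every $i$. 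By the pointwise fact, $\ol{\A(x)}z_{i}\in K\cap U_{\ell(i,\A(x))}$ and likewise for $x'$; so whenever $d(x,x')<\delta$ these two points lie within $\rho$ of each other and therefore in the same piece, forcing $\ell(i,\A(x))=\ell(i,\A(x'))$.

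Finally, since the length-$N$ cylinders generate the topology of the compact space $\S_{T}^{+}$, one has $\sup_{\abs{I}=N}\diam[I]\to 0$ as $N\to\infty$ (for any of the metrics of \S\ref{sec:Bspace}); choose $N$ with $\sup_{\abs{I}=N}\diam[I]<\delta$. For any word $I$ with $\abs{I}=N$ and any $i$, all $x\in[I]$ share a single value $\ell(i,\A(x))=:\ell(i,I)$, and then the pointwise fact yields $\ol{\A(x)}U_{i}\subseteq U_{\ell(i,I)}$ for every $x\in[I]$. I expect the only step requiring real care to be the extraction of the separation constant $\rho$ in the second paragraph — the observation that once all relevant images are trapped in one fixed compact $K\subseteq U=\bigsqcup_{j}U_{j}$, the finitely many disjoint relatively open pieces $K\cap U_{j}$ become positively separated — together with the slightly fiddly bookkeeping in the first paragraph that turns the real multicone condition into the complexified inclusion $\ol{A}(U_{i})\subseteq U_{\ell}$; everything else is a routine compactness/uniform-continuity argument.
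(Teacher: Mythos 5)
Your proof is correct and follows essentially the same route as the paper's two-line argument: uniform continuity of $x\mapsto\ol{\A(x)}$ on the compact space $\S_{T}^{+}$ combined with positive separation of the pieces $U_{j}$, together with the (implicit in the paper) pointwise fact that each $\ol{\A(x)}$ maps each $U_{i}$ into a single $U_{\ell}$. The only difference is that you spell out what the paper leaves implicit — the connectedness/complexification argument for the pointwise fact and the restriction to the compact set $K$ so that the separation constant is genuinely positive — so it is a more detailed rendering of the same proof rather than a different approach.
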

\begin{proof}
	As $\A$ is continuous so is $\ol{\A(x)}$. Take $N$ large enough such that if $x_{i}=y_{i}$ for all $0 \leq i \leq N-1$ then $\norm{\ol{\A(x)} - \ol{\A(y)}} < \min_{j_{1},j_{2}} \dist(U_{j_{1}}, U_{j_{2}})$.
\end{proof}

\begin{remark}
	Up to taking a higher block representation we may assume that $N=1$.
\end{remark}

\begin{hyp}
	We will say that a function $\A:\S_{T}^{+} \to GL_{d}(\R)$ satisfies \textbf{(H1)} if $\A$ is continuous, $\A(\S_{T}^{+})$ is multipositive and satisfies the conditions of Proposition \ref{prop:locallyconstantcones} with $N = 1$.
\end{hyp}

If we assume that $\A$ satisfies \textbf{(H1)} and take $U$ as in Theorem \ref{thm:conesmorris} then for each $x \in \S_{T}^{+}$ the operator which does a precomposition by  $\ol{\A(x)}$ acts on the Bergman space $A^{2}(U)$. For more detail on Bergman spaces we refer the reader to \cite{MR2414325}. We briefly recall the definition of $A^{2}(U)$.
\[A^{2}(U)=\set{f : f \text{ is analytic on }U \text{ and }\norm{f}_{A^{2}(U)}<\infty}\]
where
\[ \norm{f}_{A^{2}(U)} = \left(\int_{U}\abs{f(x+iy)}^{2}dxdy\right)^{1/2}. \]
It is well known that $A^{2}(U)$ is a Hilbert space with the $L^{2}$ inner product.

\section{The operators $\L_{\beta}$ and the Banach space $\B$}\label{sec:Bspace}

Let $\varphi:\S_{T}^{+}\to \R$ define
\[ \var_{n}\varphi = \sup\set{\abs{\varphi(x)-\varphi(y)}:x_{i}=y_{i} \text{ for all }0\leq i \leq n-1}. \]
Similarly let $\A:\S_{T}^{+} \to M_{d}(\R)$ and define
\[ \var_{n}\A =\sup\set{\norm{\A(x)-\A(y)}:x_{i}=y_{i} \text{ for all }0\leq i \leq n-1}. \]

\begin{proposition}
	Given a sequence $\set{\kappa_{n}}_{n=1}^{\infty}$ decreasing to $0$ the function
	\[ d_{\set{\kappa_{n}}}(x,y)=\kappa_{k(x,y)} \]
	where $k(x,y)=\min\set{i:x_{i}\neq y_{i}}$ defines an ultrametric on $\S_{T}^{+}$.
\end{proposition}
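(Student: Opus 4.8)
The plan is to verify directly the three defining properties of an ultrametric for $d := d_{\set{\kappa_n}}$: that $d(x,y) = 0$ if and only if $x = y$; that $d(x,y) = d(y,x)$; and that $d$ satisfies the strong triangle inequality $d(x,z) \le \max\set{d(x,y), d(y,z)}$ for all $x,y,z \in \S_T^+$. Throughout I use the convention $k(x,y) = \infty$ (so that $d(x,y) = 0$) precisely when $x = y$; this is the only sensible reading of the definition, since two distinct points of $\S_T^+$ must disagree in some coordinate and hence have $k(x,y) < \infty$, while for $x = y$ the set $\set{i : x_i \neq y_i}$ is empty.

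Symmetry is immediate from $k(x,y) = k(y,x)$. Nondegeneracy is almost as easy: each $\kappa_n$ is positive, so $d(x,y) = \kappa_{k(x,y)} > 0$ whenever $x \neq y$, while $d(x,x) = 0$ by convention. The one substantive point is the strong triangle inequality. Fix $x,y,z$; if two of them coincide the inequality is trivial, so assume they are pairwise distinct and set $k_1 = k(x,y)$, $k_2 = k(y,z)$, $k_3 = k(x,z)$, all finite. The key observation is that $k_3 \ge \min\set{k_1,k_2}$: for every index $i < \min\set{k_1,k_2}$ we have $x_i = y_i$ (because $i < k_1$) and $y_i = z_i$ (because $i < k_2$), hence $x_i = z_i$, so the first coordinate at which $x$ and $z$ differ cannot be smaller than $\min\set{k_1,k_2}$. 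Since $\set{\kappa_n}$ is decreasing, $n \mapsto \kappa_n$ is order-reversing, so from $k_3 \ge \min\set{k_1,k_2}$ we conclude
\[ d(x,z) = \kappa_{k_3} \le \kappa_{\min\set{k_1,k_2}} = \max\set{\kappa_{k_1},\kappa_{k_2}} = \max\set{d(x,y), d(y,z)}, \]
which is exactly the strong triangle inequality. In particular this implies the ordinary triangle inequality, so $d$ is genuinely a metric.

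There is no real obstacle in this argument; it is a direct unwinding of the definitions. The only things worth flagging are the $\infty$-convention identifying $d(x,y) = 0$ with $x = y$, and the fact that it is the positivity of every $\kappa_n$ — rather than monotonicity alone — that upgrades $d$ from a pseudometric to a metric, while it is the monotonicity of $\set{\kappa_n}$ that is responsible for the strong (as opposed to ordinary) triangle inequality.
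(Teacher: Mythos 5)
Your proof is correct and follows the standard direct verification; the paper in fact states this proposition without proof, treating it as routine, and your argument is exactly the one the author would have intended. In particular, your identification of the two distinct roles played by the hypotheses — positivity of the $\kappa_n$ for nondegeneracy, monotonicity for the strong triangle inequality via $k(x,z)\ge\min\set{k(x,y),k(y,z)}$ — is the right way to organize it, and your handling of the $k(x,x)=\infty$ convention is the correct reading of the definition.
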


\begin{hyp}
	We will say that $\varphi$ and $\A$ satisfy \textbf{H2}$(\set{\kappa_{n}})$ if
	\[ \var_{n}\varphi, \var_{n}\A = O(\kappa_{n}). \]
\end{hyp}

Define the space of Lipschitz function in the $d_{\set{\kappa_{n}}}$ metric
\[ \lip(\S_{T}^{+}, \set{\kappa_{n}})=\set{f \in C(\S_{T}^{+}\to \C): \sup_{k}\kappa_{k}^{-1}\var_{k}f<\infty} \]
This becomes a Banach space in the usual way. Define
\[ \abs{f}_{\set{\kappa_{n}}} = \sup_{k\geq 1}\kappa_{k}^{-1}\var_{k}f \]
and set 
\[ \norm{f}_{\lip(\S_{T}^{+}, \set{\kappa_{n}})}=\norm{f}_{\infty}+\abs{f}_{\set{\kappa_{n}}}.  \]

 Let $f(x,z)$ be a function on  $\S_{T}^{+}\times U$. Define
\[ \norm{f}_{\B(U, \kappa_{n})} = \sup_{z\in U} \norm{f(\cdot ,z)}_{\lip(\S_{T}^{+},\set{\kappa_{n}})}. \]
Let
\[ \B(U, \kappa_{n}) =\set{ f(x,z) : z \mapsto f(x,z) \text{ is analytic on $U$ for all }x \in \S_{T}^{+} \text{ and }\norm{f}_{\B(U, \kappa_{n})}<\infty}.  \]
When it is clear from context what $U$ or $\kappa_{n}$ are we will right $\B(U,\kappa_{n}) = \B(U) = \B(\kappa_{n}) = \B$. In addition it will be convenient for us to define the set $B(U,\cdot)$ to the space of functions such that $z \mapsto f(x, z)$ is analytic on $U$ for all $x \ in \S_{T}^{+}$ and $\set{f(\cdot , z): z \in U}$ is equicontinuous.

\begin{proposition}
	For any open set $U \subseteq \C$ and decreasing sequence converging  to $0$ $\set{\kappa_{n}}$ $(\B(U , \kappa_{n}) , \norm{\cdot}_{\B(U, \kappa_{n})})$ is a Banach space.
\end{proposition}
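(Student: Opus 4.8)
The plan is the routine Banach-space completeness argument, using the fact already recorded in the text that $\lip(\S_{T}^{+}, \set{\kappa_{n}})$ is itself a Banach space. First I would dispense with the norm axioms: $\norm{\cdot}_{\B(U, \kappa_{n})}$ is homogeneous and satisfies the triangle inequality because it is a supremum over $z \in U$ of the norms $\norm{f(\cdot,z)}_{\lip(\S_{T}^{+},\set{\kappa_{n}})}$, each of which is a norm; and it is definite since $\norm{f}_{\B(U, \kappa_{n})}=0$ forces $f(\cdot,z)\equiv 0$ for every $z\in U$, hence $f\equiv 0$. Thus $\B(U,\kappa_{n})$ is a normed space and only completeness needs an argument.

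Next I would take a Cauchy sequence $\set{f_{j}}_{j\geq 1}$ in $\B(U,\kappa_{n})$ and construct its limit fibrewise in $z$. For each fixed $z\in U$ one has $\norm{f_{j}(\cdot,z)-f_{\ell}(\cdot,z)}_{\lip(\S_{T}^{+},\set{\kappa_{n}})}\leq \norm{f_{j}-f_{\ell}}_{\B(U, \kappa_{n})}$, so $\set{f_{j}(\cdot,z)}_{j}$ is Cauchy in the Banach space $\lip(\S_{T}^{+},\set{\kappa_{n}})$ and converges there; call the limit $f(\cdot,z)$, which defines a function $f$ on $\S_{T}^{+}\times U$. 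Given $\e>0$, choose $N$ with $\norm{f_{j}-f_{\ell}}_{\B(U, \kappa_{n})}<\e$ for $j,\ell\geq N$. Fixing $z$ and $j\geq N$ and letting $\ell\to\infty$, continuity of the norm on $\lip(\S_{T}^{+},\set{\kappa_{n}})$ gives $\norm{f_{j}(\cdot,z)-f(\cdot,z)}_{\lip(\S_{T}^{+},\set{\kappa_{n}})}\leq\e$; taking the supremum over $z\in U$ yields $\norm{f_{j}-f}_{\B(U, \kappa_{n})}\leq\e$ for all $j\geq N$. In particular $\norm{f}_{\B(U, \kappa_{n})}\leq\norm{f_{N}}_{\B(U, \kappa_{n})}+\e<\infty$, and $f_{j}\to f$ in the $\B$-norm, provided we can confirm $f\in\B(U,\kappa_{n})$.

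The one remaining point is that $z\mapsto f(x,z)$ is analytic on $U$ for every $x\in\S_{T}^{+}$. Since $\norm{\cdot}_{\infty}\leq\norm{\cdot}_{\lip(\S_{T}^{+},\set{\kappa_{n}})}$, the estimate of the previous paragraph gives $\sup_{x\in\S_{T}^{+}}\sup_{z\in U}\abs{f_{j}(x,z)-f(x,z)}\leq\e$ for $j\geq N$. Hence, for each fixed $x$, the holomorphic functions $f_{j}(x,\cdot)$ converge to $f(x,\cdot)$ uniformly on $U$, a fortiori locally uniformly, so by the Weierstrass convergence theorem $f(x,\cdot)$ is analytic on $U$. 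Therefore $f\in\B(U,\kappa_{n})$, and combined with the norm bound above this shows $\B(U,\kappa_{n})$ is complete.

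I expect the last step — promoting analyticity of the $f_{j}$ in $z$ to analyticity of the limit — to be the only place requiring any thought, and even that is mild: it works precisely because the $\B$-norm dominates the supremum norm, forcing the convergence $f_{j}(x,\cdot)\to f(x,\cdot)$ to be uniform on $U$ for each $x$. Everything else is bookkeeping with the fibrewise structure of the norm.
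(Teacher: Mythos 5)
Your proof is correct and is essentially the same as the paper's: both build the limit fibrewise in $z$ and then check that analyticity survives the limit. The paper compresses this by citing that the $\ell^\infty$-direct sum $\bigoplus_{z\in U}\lip(\S_{T}^{+},\set{\kappa_{n}})$ is Banach and asserting that $\B(U,\kappa_{n})$ is a closed subspace; your Weierstrass-convergence step is exactly the content of that closedness claim, so you have simply supplied the detail the paper leaves implicit.
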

\begin{proof}
	The ``$\infty$" direct sum of Banach spaces is a Banach space (see for instance \cite[III Proposition 4.4]{MR1070713}). That is to say that
	\[ \bigoplus_{z \in U}\lip(\S_{T}^{+},\set{\kappa_{n}}) = \set{ f(x,z) : \sup_{z\in U} \norm{f(\cdot ,z)}_{\lip(\S_{T}^{+},\set{\kappa_{n}})} < \infty}  \]
	with norm $\sup_{z\in U} \norm{f(\cdot ,z)}_{\lip(\S_{T}^{+},\set{\kappa_{n}})}$ is a Banach space. The set $\B(U, \kappa_{n})$ is a closed subspace and hence a Banach space.
\end{proof}

Notice that we can view $\B(U,\kappa_{n})$ as a subset of $C\left(\S_{T}^{+} \times \bigcup_{i}\mathcal{K}_{i}\right)$ (which is a quotient of $C(\S_{T}^{+} \times \RP^{d-1}))$ by taking the quotient $\B(U,\kappa_{n})/\Lambda$ where
\[ \Lambda = \set{f \in \B(U,\kappa_{n}): f(x, z) = 0 \text{ for all }z \in \R^{d}}. \]
In this case the quotient map is actually an isomorphism because analytic functions are entirely determined by their values on $\R^{d}$ then using the Stone-Weierstrass theorem it can be verified that $\B(U, \kappa_{n})$ is dense in $C\left(\S_{T}^{+} \times \bigcup_{i}\mathcal{K}_{i}\right)$.

If we assume that $\A$ satisfies \textbf{H1} and let $U$ and $\ol{\A(x)}$ be as in Theorem \ref{thm:conesmorris} then we can define the operator
\[ \L_{\beta}f(x,z) = \sum_{i:ix \in \S_{T}^{+}}e^{\varphi(ix)}\norm{\A(ix)\frac{z}{\norm{z}}}^{\beta}f(ix, \ol{\A(ix)z}). \]
Where $f(x,z)$ is a function on $\S_{T}^{+} \times U$. Notice that $z \mapsto \norm{\A(ix)\frac{z}{\norm{z}}}^{\beta}$ and $z \mapsto \ol{\A(ix)z}$ are analytic. Thus if $f \in \B(U, \kappa_{n})$ then $\L_{\beta}f(x,z)$ is a linear combination of analytic functions and hence $z \mapsto \L_{\beta}f(x,z)$ is analytic. So to show that $\L_{\beta}$ acts continuously on $\B(U, \kappa_{n})$ (for a suitable $\kappa_{n}$) then we must primarily be concerned with the Lipschitz constants of the functions $\L_{\beta}f(\cdot , z)$.

\begin{lemma}\label{lem:variationinequality}
	Let $\beta \in \R$, assume that $\varphi$ is continuous, $\A$ satisfies \textbf{H1} and let $K$ and $U$ be as in Theorem \ref{thm:conesmorris}. For any open set $W$ with $K \subset W \subset U$ there exists a constant $C>0$ such that for all $z \in U$ we have that
	\[ &\var_{k}\L_{\beta}f(\cdot , z) \\
	&\leq C \left(\sup_{x}\norm{f(x,\cdot)}_{A^{2}(W)} \var_{k+1}\A + \sup_{z \in K}\var_{k+1}f(\cdot, z) + \sup_{x}\norm{f(x,\cdot)}_{A^{2}(W)}\var_{k+1}\varphi \right) \]
	for all $k \geq 1$ and $f \in B(U, \cdot)$.
\end{lemma}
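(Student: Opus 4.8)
The plan is to estimate the $k$th variation of $\L_{\beta}f(\cdot,z)$ by taking two points $x,y$ that agree in coordinates $0,\dots,k-1$ and bounding the difference of the two defining sums term by term. Since $\A$ satisfies \textbf{(H1)}, every $i$ admissible for $x$ is admissible for $y$ (the set of such $i$ depends only on $x_0=y_0$), so we can pair up the summands. Writing
\[
G_i(x,z) = e^{\varphi(ix)}\norm{\A(ix)\tfrac{z}{\norm z}}^{\beta} f(ix,\ol{\A(ix)z}),
\]
we have $\L_\beta f(x,z)-\L_\beta f(y,z)=\sum_i \bigl(G_i(x,z)-G_i(y,z)\bigr)$, and we split each difference into three pieces using the telescoping identity $abc - a'b'c' = (a-a')bc + a'(b-b')c + a'b'(c-c')$ applied to the three factors $e^{\varphi(ix)}$, $\norm{\A(ix)z/\norm z}^\beta$, and $f(ix,\ol{\A(ix)z})$. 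Each of these three pieces will contribute one of the three terms on the right-hand side of the claimed inequality, and the crucial point is that since $x,y$ agree in coordinates $0,\dots,k-1$, the strings $ix,iy$ agree in coordinates $0,\dots,k$, which is why $\var_{k+1}$ (and not $\var_k$) appears throughout.

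For the first piece, $|e^{\varphi(ix)}-e^{\varphi(iy)}| \le e^{\norm\varphi_\infty}\var_{k+1}\varphi$, and I bound the remaining two factors: $\norm{\A(ix)z/\norm z}^\beta$ is uniformly bounded on $z\in U$ because $U$ is bounded and bounded away from $0$ and $\A$ is continuous, while $|f(ix,\ol{\A(ix)z})| \le \norm{f(ix,\cdot)}_\infty$ — here I use that $\ol{\A(ix)z}\in K$ for all $z\in U$ (Theorem \ref{thm:conesmorris}), and that the sup norm on $K$ is controlled by the $A^2(W)$ norm since $K\Subset W$ (a standard Bergman-space interior estimate: point evaluations on a compact subset are bounded functionals on $A^2$ of a strictly larger open set). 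For the second piece I use that $z\mapsto \norm{\A(ix)z/\norm z}^\beta$ is analytic on $U$ and that $\|\A(ix)-\A(iy)\|\le \var_{k+1}\A$; differentiating (or estimating directly) shows $|\,\norm{\A(ix)z/\norm z}^\beta - \norm{\A(iy)z/\norm z}^\beta\,| \le C\var_{k+1}\A$ uniformly in $z\in U$, with $C$ depending only on $\beta$, $\A$, and $U$, and the accompanying $f$-factor is again bounded by $\norm{f(ix,\cdot)}_{A^2(W)}$ via the interior estimate. For the third piece, which is the main obstacle, I need to compare $f(ix,\ol{\A(ix)z})$ with $f(iy,\ol{\A(iy)z})$; I split this further as $\bigl(f(ix,\ol{\A(ix)z})-f(iy,\ol{\A(ix)z})\bigr) + \bigl(f(iy,\ol{\A(ix)z})-f(iy,\ol{\A(iy)z})\bigr)$. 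The first of these is at most $\var_{k+1}f(\cdot,\ol{\A(ix)z}) \le \sup_{w\in K}\var_{k+1}f(\cdot,w)$ since $ix,iy$ agree to depth $k+1$ and $\ol{\A(ix)z}\in K$. The second requires the analyticity of $f$ in the $z$-variable: by Cauchy's estimate, the Lipschitz constant of $w\mapsto f(iy,w)$ on $K$ is bounded by $C\norm{f(iy,\cdot)}_{A^2(W)}$ (since $\dist(K,\partial W)>0$), and $\norm{\ol{\A(ix)z}-\ol{\A(iy)z}}\le C\var_{k+1}\A$ because $\ol{A}z$ depends analytically — hence Lipschitz-continuously on the relevant compact set — on the matrix $A$, uniformly for $z\in U$.

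Summing the three pieces over $i$ and absorbing the finite number of symbols $i$ (bounded by $d$, or in general by the alphabet size) into the constant $C$ gives the stated bound. The only delicate points, which I would handle carefully, are (i) the uniform-in-$z$ Bergman interior estimate $\sup_{w\in K}|g(w)| + \lip_K(g) \le C\norm{g}_{A^2(W)}$ for $g\in A^2(W)$, $K\Subset W$, which is classical and follows from the reproducing-kernel bound; and (ii) the Lipschitz dependence of $A\mapsto \ol{A}z$ and $A\mapsto\norm{Az/\norm z}^\beta$ on the compact set $\mathcal S(\A(\S_T^+))$, uniformly in $z\in U$, which follows from smoothness of these maps together with compactness of $\mathcal S(\A(\S_T^+))$ and of $\overline U$ and the fact that $\inn{Az,w}$ stays bounded away from $0$ there. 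Note the hypothesis $f\in B(U,\cdot)$ (equicontinuity of $\{f(\cdot,z)\}_{z\in U}$) guarantees the quantities $\sup_x\norm{f(x,\cdot)}_{A^2(W)}$ and $\sup_{w\in K}\var_{k+1}f(\cdot,w)$ are finite, so the right-hand side makes sense.
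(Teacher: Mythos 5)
Your proposal is essentially the paper's own argument: you fix $x,y$ agreeing to depth $k$, telescope the difference of the two sums into pieces isolating the $\varphi$-factor, the $\norm{\A(\cdot)z/\norm{z}}^{\beta}$-factor, and the $f$-factor, and control each piece by the Bergman interior estimate (Lemma~\ref{lem:analyticinequalities}) and the Lipschitz dependence of $\ol{\A(\cdot)}$ and $\norm{\A(\cdot)z/\norm z}^{\beta}$ on the matrix entries (Lemma~\ref{lem:techbounds}). The only cosmetic difference is that you telescope into three pieces and then split the third into two, while the paper writes the resulting four terms directly; the underlying estimates and the reason $\var_{k+1}$ appears are identical.
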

\begin{proof}
	Let $k \geq 1$. Let $z \in U$ and $x,y\in \S_{T}^{+}$ with $x_{i}=y_{i}$ $0\leq i \leq k-1$ then 
	\begin{align*}
	&\abs{\L_{\beta} f(x,z)- \L_{\beta} f(y,z)}\\
	& = \abs{\sum_{i:ix \in \S_{T}^{+}} e^{\varphi(ix)}\norm{\A(ix)\frac{z}{\norm{z}}}^{\beta}f(ix,\ol{\A(ix)}z)- e^{\varphi(iy)}\norm{\A(iy)\frac{z}{\norm{z}}}^{\beta}f(iy,\ol{\A(iy)}z)}\\
	& \leq \sum_{i:ix \in \S_{T}^{+}}\abs{ e^{\varphi(ix)}\norm{\A(ix)\frac{z}{\norm{z}}}^{\beta}f(ix,\ol{\A(ix)}z)- e^{\varphi(iy)}\norm{\A(iy)\frac{z}{\norm{z}}}^{\beta}f(iy,\ol{\A(iy)}z)}.
	\end{align*}
	For each $i$ the quantity
	\begin{equation}\label{eq:terminsum}
	\abs{ e^{\varphi(ix)}\norm{\A(ix)\frac{z}{\norm{z}}}^{\beta}f(ix,\ol{\A(ix)}z)- e^{\varphi(iy)}\norm{\A(iy)\frac{z}{\norm{z}}}^{\beta}f(iy,\ol{\A(iy)}z)} 
	\end{equation}
	is bounded by the sum of 
	\begin{eqnarray}
	e^{\varphi(ix)}\norm{\A(ix)\frac{z}{\norm{z}}}^{\beta}\abs{f(ix,\ol{\A(ix)}z)-f(ix,\ol{\A(iy)}z)}, \label{eq:firstterm}\\
	e^{\varphi(ix)}\norm{\A(ix)\frac{z}{\norm{z}}}^{\beta}\abs{f(ix,\ol{\A(iy)}z)  - f(iy,\ol{\A(iy)}z)}, \label{eq:secondterm}\\
	\abs{f(iy,\ol{\A(iy)} z)}\norm{\A(ix)\frac{z}{\norm{z}}}^{\beta}\abs{ e^{\varphi(ix)}- e^{\varphi(iy)}}, \text{ and } \label{eq:thirdterm}\\
	\abs{f(iy,\ol{\A(iy)} z)}e^{\varphi(iy)}\abs{\norm{\A(ix)\frac{z}{\norm{z}}}^{\beta} - \norm{\A(iy)\frac{z}{\norm{z}}}^{\beta}}. \label{eq:fourthterm}
	\end{eqnarray}
	First lets bound equation \eqref{eq:firstterm}. By Lemma \ref{lem:analyticinequalities} we have that
	\begin{align*}
	\abs{f(ix,\ol{\A(ix)}z)-f(ix,\ol{\A(iy)}z)}&\leq C_{K,W}\norm{f(ix, \cdot)}_{A^{2}(W)}\norm{\ol{\A(ix)}z-\ol{\A(iy)}z}\\
	& \leq C_{K,\A,W}\sup_{x}\norm{f(x,\cdot)}_{A^{2}(W)}\var_{k+1}\A \text{  (by lemma \ref{lem:techbounds})}
	\end{align*}
	
	To bound equation \eqref{eq:secondterm} notice that
	\begin{align*}
	\abs{f(ix,\ol{\A(iy)}z)  - f(iy,\ol{\A(iy)}z)}&\leq \var_{k+1}f(\cdot , \ol{\A(iy)}z)\\
	& \leq \sup_{z \in K} \var_{k+1}f(\cdot , z).
	\end{align*}
	To bound equation \eqref{eq:thirdterm} notice that
	\begin{align*}
	\abs{ e^{\varphi(ix)}- e^{\varphi(iy)}}&\leq e^{\norm{\varphi}_{\infty}} \abs{\varphi(ix) - \varphi(iy)}\\
	&\leq \var_{k+1}\varphi.
	\end{align*}
	By Lemma \ref{lem:techbounds} we can bound \eqref{eq:fourthterm} by
	\[ C_{K,\A,W,\beta}\sup_{x}\norm{f(x, \cdot)}_{A^{2}(W)} e^{\norm{\varphi}_{\infty}}\var_{k+1}\A  \]
	Hence the result.
\end{proof}

\begin{lemma}\label{lem:uniform_norm}
	Assume that $\varphi$ is continuous and $\A$ satisfies \textbf{H1} and let $K$ and $U$ be as in Theorem \ref{thm:conesmorris}. For any open set $W$ with $K \subseteq W \subseteq U$ define the operator $J : A^{2}(U) \to A^{2}(W)$ by $Jf = f|_{W}$. There is a constant $C>0$ such that 
	\[ \abs{\L_{\beta}f(x,z)}\leq C \sup_{x \in \S_{T}^{+}}\norm{Jf(x,\cdot)}_{A^{2}(W)} \]
	for all $x \in \S_{T}^{+}$, $z \in U$ and $f\in \B(U , \cdot)$.
\end{lemma}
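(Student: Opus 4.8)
The plan is to estimate $\abs{\L_{\beta}f(x,z)}$ by bounding the terms of the defining sum
\[ \L_{\beta}f(x,z) = \sum_{i:ix\in\S_{T}^{+}} e^{\varphi(ix)}\,\norm{\A(ix)\frac{z}{\norm{z}}}^{\beta}\, f\bigl(ix,\ol{\A(ix)}z\bigr) \]
one at a time. Since the number of symbols $i$ with $ix\in\S_{T}^{+}$ is at most the size of the alphabet, it suffices to bound a single term by a universal constant times $\sup_{y\in\S_{T}^{+}}\norm{Jf(y,\cdot)}_{A^{2}(W)}$. Each term factors as a scalar multiplier $e^{\varphi(ix)}\norm{\A(ix)\frac{z}{\norm{z}}}^{\beta}$ times the point evaluation $f(ix,\ol{\A(ix)}z)$, and I would control these two pieces independently.

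For the scalar multipliers: $\varphi$ is continuous on the compact space $\S_{T}^{+}$, so $e^{\varphi(ix)}\le e^{\norm{\varphi}_{\infty}}$; and $\A(\S_{T}^{+})$ is compact, being the continuous image of a compact set, so $\sup_{y}\norm{\A(y)}<\infty$. Because $\A$ takes values in $GL_{d}(\R)$ and matrix inversion is continuous, $\sup_{y}\norm{\A(y)^{-1}}<\infty$ as well, which gives a uniform lower bound $\sigma_{d}(\A(ix)) = \norm{\A(ix)^{-1}}^{-1}\ge c_{0}>0$ on the smallest singular value. Hence, for every $z\in U$,
\[ c_{0}\le\sigma_{d}(\A(ix))\le\norm{\A(ix)\frac{z}{\norm{z}}} = \frac{\norm{\A(ix)z}}{\norm{z}}\le\norm{\A(ix)}\le\sup_{y}\norm{\A(y)}, \]
so $\norm{\A(ix)\frac{z}{\norm{z}}}$ remains in a fixed compact subinterval of $(0,\infty)$ uniformly over $i$, over $x$ with $ix\in\S_{T}^{+}$, and over $z\in U$; therefore $\norm{\A(ix)\frac{z}{\norm{z}}}^{\beta}$ is bounded by a constant $M_{\beta}<\infty$ depending only on $\beta$, $\A$, and $U$, whatever the sign of $\beta$.

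For the point evaluation: since $\A(ix)\in\mathcal{S}(\A(\S_{T}^{+}))$, item (7) of Theorem \ref{thm:conesmorris} (that $\bigcup_{A\in\mathcal{S}(\mathsf{A})}\ol{A}(U)$ is compactly contained in $U$) together with the choice of $K$ gives $\ol{\A(ix)}z\in\ol{\A(ix)}(U)\subseteq\bigcup_{A\in\mathcal{S}(\mathsf{A})}\ol{A}(U)\subseteq K$, and $K$ is by construction compactly contained in $W$. The standard Bergman-space point-evaluation inequality (a sub-mean-value estimate over discs of radius $\dist(K,\C\setminus W)$, which is the order-zero case of Lemma \ref{lem:analyticinequalities}) then supplies a constant $C_{K,W}$, depending only on $K$ and $W$, with $\sup_{\zeta\in K}\abs{g(\zeta)}\le C_{K,W}\norm{g}_{A^{2}(W)}$ for every $g$ analytic on $W$. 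Applying this with $g = Jf(ix,\cdot)$, which is legitimate because $z\mapsto f(x,z)$ is analytic on $U\supseteq W$ by hypothesis, yields $\abs{f(ix,\ol{\A(ix)}z)}\le C_{K,W}\norm{Jf(ix,\cdot)}_{A^{2}(W)}\le C_{K,W}\sup_{y}\norm{Jf(y,\cdot)}_{A^{2}(W)}$. Summing the resulting estimate over the boundedly many terms produces the claim, with $C$ the product of the alphabet size, $e^{\norm{\varphi}_{\infty}}$, $M_{\beta}$, and $C_{K,W}$. The argument is largely bookkeeping once the cone geometry of Theorem \ref{thm:conesmorris} and the Bergman estimate are available; the only place that demands a little attention is the uniform two-sided control of $\norm{\A(ix)\frac{z}{\norm{z}}}$ needed to absorb a negative exponent $\beta$, and this is precisely where the assumption that $\A$ is $GL_{d}(\R)$-valued (rather than merely $M_{d}(\R)$-valued) is used.
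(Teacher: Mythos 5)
Your proposal is correct and follows essentially the same route as the paper's proof: bound the finitely many terms of the sum individually, estimate the scalar factor by $e^{\norm{\varphi}_{\infty}}\max\set{(\sup_{x}\norm{\A(x)})^{\beta},(\inf_{x}\sigma_{d}(\A(x)))^{\beta}}$, identify $f(ix,\ol{\A(ix)}z)$ with $Jf(ix,\ol{\A(ix)}z)$ since $\ol{\A(ix)}z\in K\subseteq W$, and invoke the Bergman point-evaluation bound with constant $C_{K,W}$. The extra detail you give (the two-sided control of $\norm{\A(ix)\frac{z}{\norm{z}}}$ for negative $\beta$ and the verification that the image point lies in $K$) is exactly what the paper's one-line computation implicitly uses.
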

\begin{proof}
	We have that 
	\begin{align*}
	\abs{\L_{\beta} f(x,z)} &\leq \sum_{i:ix \in \S_{T}^{+}} e^{\varphi(ix)}\norm{\A(ix)\frac{z}{\norm{z}}}^{\beta}\abs{f(ix,\ol{\A(ix)}z)}\\
	&=\sum_{i:ix \in \S_{T}^{+}} e^{\varphi(ix)}\norm{\A(ix)\frac{z}{\norm{z}}}^{\beta}\abs{Jf(ix,\ol{\A(ix)}z)}\\
	&\leq \abs{\S} e^{\norm{\varphi}_{\infty}}\max\set{(\sup_{x}\norm{\A(x)})^{\beta}, (\inf_{x}\sigma_{d}(\A(x)))^{\beta}}C_{K,W}\norm{Jf(ix, \cdot)}_{A^{2}(W)}.
	\end{align*}
\end{proof}

\begin{remark}
	Notice that from the proofs the constants in the Lemma \ref{lem:uniform_norm} and Lemma \ref{lem:variationinequality} are continuous in $\beta$ and hence can be taken uniform over the set $\beta \leq 1$. 
\end{remark}

\begin{proposition}
	Assume that $\A$ satisfies \textbf{H1} and that $\varphi, \A$ satisfy \textbf{H2}$(\set{\kappa_{n}})$. The associated operator $\L_{\beta}:\B(U, \kappa_{n}) \to \B(U, \kappa_{n})$ is bounded.
\end{proposition}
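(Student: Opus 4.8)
The plan is to prove the norm estimate $\norm{\L_\beta f}_{\B} \leq C\norm{f}_{\B}$ for $\B = \B(U,\kappa_n)$, linearity of $\L_\beta$ being immediate from its defining sum. Recall $\norm{f}_{\B} = \sup_{z\in U}\big(\norm{f(\cdot,z)}_\infty + \abs{f(\cdot,z)}_{\set{\kappa_n}}\big)$ with $\abs{g}_{\set{\kappa_n}} = \sup_{k\geq1}\kappa_k^{-1}\var_k g$, so I must bound separately $\sup_{z\in U}\norm{\L_\beta f(\cdot,z)}_\infty$ and $\sup_{z\in U}\sup_{k\geq1}\kappa_k^{-1}\var_k\L_\beta f(\cdot,z)$. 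First I would check that $\B$ is contained in the class $B(U,\cdot)$ on which Lemmas \ref{lem:variationinequality} and \ref{lem:uniform_norm} are stated: for $f\in\B$ and any $z\in U$ one has $\var_k f(\cdot,z) \leq \kappa_k\abs{f(\cdot,z)}_{\set{\kappa_n}} \leq \kappa_k\norm{f}_{\B}\to 0$, so the family $\set{f(\cdot,z):z\in U}$ is uniformly equicontinuous, and analyticity in $z$ is part of the definition of $\B$.

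Fix once and for all an open set $W$ with $K \subset W \subset U$. Since $U$, hence $W$, is bounded (Theorem \ref{thm:conesmorris}), for every $x\in\S_T^+$ we have
\[ \norm{f(x,\cdot)}_{A^2(W)} \;\leq\; \abs{W}^{1/2}\sup_{w\in W}\abs{f(x,w)} \;\leq\; \abs{W}^{1/2}\sup_{w\in U}\norm{f(\cdot,w)}_\infty \;\leq\; \abs{W}^{1/2}\norm{f}_{\B}, \]
and the same bound holds with $Jf$ in place of $f$. Feeding this into Lemma \ref{lem:uniform_norm} gives at once $\sup_{z\in U}\norm{\L_\beta f(\cdot,z)}_\infty \leq C\abs{W}^{1/2}\norm{f}_{\B}$, which disposes of the sup-norm term.

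For the Lipschitz seminorm, Lemma \ref{lem:variationinequality} gives, for all $z\in U$ and $k\geq1$,
\[ \var_k\L_\beta f(\cdot,z) \;\leq\; C\Big( \abs{W}^{1/2}\norm{f}_{\B}\var_{k+1}\A + \sup_{z'\in K}\var_{k+1}f(\cdot,z') + \abs{W}^{1/2}\norm{f}_{\B}\var_{k+1}\varphi \Big). \]
Now divide by $\kappa_k$. By \textbf{H2}$(\set{\kappa_n})$ there is a constant $C_0$ with $\var_n\A,\var_n\varphi \leq C_0\kappa_n$ for all $n\geq1$ (the asymptotic bound handles large $n$; continuity of $\A,\varphi$ on the compact space $\S_T^+$ and positivity of $\kappa_n$ handle the finitely many small $n$), so $\kappa_k^{-1}\var_{k+1}\A \leq C_0\kappa_{k+1}/\kappa_k \leq C_0$ because $\set{\kappa_n}$ is decreasing, and likewise for $\varphi$; and $\kappa_k^{-1}\var_{k+1}f(\cdot,z') \leq (\kappa_{k+1}/\kappa_k)\abs{f(\cdot,z')}_{\set{\kappa_n}} \leq \norm{f}_{\B}$. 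Hence $\kappa_k^{-1}\var_k\L_\beta f(\cdot,z) \leq C'\norm{f}_{\B}$ uniformly in $k$ and $z$, i.e. $\sup_{z\in U}\abs{\L_\beta f(\cdot,z)}_{\set{\kappa_n}} \leq C'\norm{f}_{\B}$. Adding the two estimates yields $\norm{\L_\beta f}_{\B} \leq C''\norm{f}_{\B}$.

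There is no real obstacle here: the analytic substance is entirely contained in Lemmas \ref{lem:variationinequality} and \ref{lem:uniform_norm}, and what remains is bookkeeping. The only points requiring a little care are (i) verifying $\B \subseteq B(U,\cdot)$ so those lemmas apply, (ii) bounding the $A^2(W)$-norms on the right-hand sides by multiples of $\norm{f}_{\B}$, which uses boundedness of $W$, and (iii) converting the $\var_{k+1}$ estimates into $\kappa_k$-normalized ones, which uses monotonicity of $\set{\kappa_n}$.
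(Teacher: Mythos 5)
Your proof is correct and follows essentially the same route as the paper: the sup-norm term is handled by Lemma \ref{lem:uniform_norm}, the Lipschitz seminorm by Lemma \ref{lem:variationinequality}, and the two estimates are added. You additionally spell out the details the paper leaves implicit (that $\B(U,\kappa_n)\subseteq B(U,\cdot)$, that $A^{2}$-norms over the bounded set $W$ are controlled by $\norm{f}_{\B}$, and the $\kappa_k$-normalization using monotonicity of $\set{\kappa_n}$), which is consistent with, not divergent from, the paper's argument.
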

\begin{proof}
	Let $f \in \B(U, \kappa_{n})$. By Lemma \ref{lem:variationinequality} we have that there is a constant $C_{1}>0$ such that
	\[ \sup_{z\in U}\kappa_{k}^{-1}\var_{k}\L_{\beta}f(\cdot , z)\leq C_{1} \norm{f}_{\B(U, \kappa_{n})} \]\
	By Lemma \ref{lem:uniform_norm} we have that there is a constant $C_{2}>0$ such that 
	\[ \abs{\L_{\beta} f(x,z)}\leq C_{2} \sup_{x \in \S_{T}^{+}}\norm{f(x,\cdot)}_{A^{2}(U)}\leq C_{2} \norm{f}_{\B(U,\kappa_{n})} \]
	for all $z \in U$ and $x \in \S_{T}^{+}$. Thus we have that 
	\[ \sup_{z \in U}\norm{\L_{\beta} f(\cdot , z)}_{\lip(\S_{T}^{+}, \set{\kappa_{n}})}\leq (C_{1}+C_{2})\norm{f}_{\B(U,\kappa_{n})}. \]
	Therefore $\L_{\beta}$ is bounded.
\end{proof}

\section{Approximation numbers and eigenvalues}\label{sec:approx}

Throughout this section we will assume that $\A$ satisfies \textbf{H1} and take $K$ and $U$ as in Theorem \ref{thm:conesmorris}. We will also assume that $\varphi, \A$ satisfy \textbf{H2}$(\set{\kappa_{n}})$ for some $\set{\kappa_{n}}$ and denote $\B(U, \kappa_{n}) = \B(U)$ and similarly for $\B(W)$. We will use a method similar to \cite{MR4147353}.

Recall that the \emph{approximation numbers} of a linear operator $L$ is the sequence
\[ a_{n}(L)=\inf \set{\norm{L-F}: \ran(F)<n}. \]
where $\ran(F)$ is the rank of $F$.

\begin{proposition}\label{prop:approxnum}
	\begin{enumerate}
		\item 
		For any $n,m \geq 1$ and $S,T$ bounded linear operators
		\[ a_{n+m-1}(S+T) \leq a_{m}(T)+a_{n}(S). \]
		
		\item 
		For any $n \geq 1$ and $S_{1},T,S_{2}$ bounded linear operators
		\[ a_{n}(S_{1}TS_{2}) \leq \norm{S_{1}}a_{n}(T)\norm{S_{2}}. \]
	\end{enumerate}
\end{proposition}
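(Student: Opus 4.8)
The plan is to prove both inequalities directly from the definition of the approximation numbers by a standard $\e$-approximation argument, using only two elementary facts: the rank of a sum of operators is at most the sum of the ranks, and the operator norm is submultiplicative (and the class of bounded finite-rank operators is stable under sums and under pre/post-composition with bounded operators).

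For part (1), fix $\e>0$. By definition of $a_{n}(S)$ and $a_{m}(T)$ I can choose bounded finite-rank operators $F,G$ with $\ran(F)<n$, $\ran(G)<m$, $\norm{S-F}\leq a_{n}(S)+\e$ and $\norm{T-G}\leq a_{m}(T)+\e$. Since $\ran(F+G)\leq \ran(F)+\ran(G)\leq (n-1)+(m-1)=n+m-2<n+m-1$, the operator $F+G$ is an admissible competitor in the infimum defining $a_{n+m-1}(S+T)$, and by the triangle inequality $\norm{(S+T)-(F+G)}\leq \norm{S-F}+\norm{T-G}\leq a_{n}(S)+a_{m}(T)+2\e$. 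Hence $a_{n+m-1}(S+T)\leq a_{n}(S)+a_{m}(T)+2\e$, and letting $\e\to 0$ gives the claim.

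For part (2), fix $\e>0$ and choose a bounded finite-rank operator $F$ with $\ran(F)<n$ and $\norm{T-F}\leq a_{n}(T)+\e$. Then $S_{1}FS_{2}$ is bounded and finite-rank with $\ran(S_{1}FS_{2})\leq \ran(F)<n$, so it is admissible for $a_{n}(S_{1}TS_{2})$; by submultiplicativity of the operator norm, $\norm{S_{1}TS_{2}-S_{1}FS_{2}}=\norm{S_{1}(T-F)S_{2}}\leq \norm{S_{1}}\,\norm{T-F}\,\norm{S_{2}}\leq \norm{S_{1}}\,(a_{n}(T)+\e)\,\norm{S_{2}}$. Taking the infimum and then letting $\e\to 0$ yields $a_{n}(S_{1}TS_{2})\leq \norm{S_{1}}\,a_{n}(T)\,\norm{S_{2}}$.

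I do not expect any genuine obstacle here; this is a classical property of $s$-numbers. The only points deserving a moment's care are purely bookkeeping: the infimum in the definition ranges over operators of rank \emph{strictly} less than the index, so the identity $(n-1)+(m-1)=n+m-2$ is exactly what forces the index $n+m-1$ on the left-hand side of (1); and one should fix the approximants to be bounded at the outset so that the sums and compositions appearing above are again bounded finite-rank operators.
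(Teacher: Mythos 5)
Your proof is correct and complete; both the rank estimates and the norm estimates are exactly right, and the strict-inequality bookkeeping ($\ran(F)<n$, $\ran(G)<m$, so $\ran(F+G)<n+m-1$) is handled properly. The paper itself states this proposition without proof, treating it as a classical property of approximation numbers ($s$-numbers), so your argument simply supplies the standard textbook justification that the author omitted; there is nothing to reconcile between the two.
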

For a compact linear operator $\L$ let $\set{\lambda_{n}(\L)}$ be the eigenvalues of $\L$ listed with algebraic multiplicity in non-increasing order by modulus.

\begin{theorem}[Weyl's Inequality]\label{thm:weyl}
	Let $T$ be a compact linear operator on a Banach space. Then
	\[ \prod_{k=1}^{n}\abs{\lambda_{k}(T)} \leq n^{n/2}\prod_{k=1}^{n}a_{k}(T). \]
\end{theorem}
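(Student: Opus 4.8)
The plan is to reduce the inequality to a finite‑dimensional statement on a $T$‑invariant subspace that carries the first $n$ eigenvalues, and then to invoke John's theorem to compare the Banach space approximation numbers of $T$ with Hilbertian singular values on that subspace.

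If $\lambda_{n}(T)=0$ the left‑hand side vanishes and there is nothing to prove, so assume $\lambda_{1}(T),\dots,\lambda_{n}(T)$ are all nonzero. Because $T$ is compact, $\Lambda:=\set{\mu\in\spec(T):\abs{\mu}\geq\abs{\lambda_{n}(T)}}$ is a finite set isolated from the rest of $\spec(T)$, so the associated Riesz projection has a finite‑dimensional $T$‑invariant range $M'$ on which $T$ has precisely the eigenvalues $\mu\in\Lambda$, with their algebraic multiplicities. Since an endomorphism of a finite‑dimensional complex vector space admits invariant subspaces of every dimension, one can then choose inside $M'$ an $n$‑dimensional $T$‑invariant subspace $M\subseteq X$ with $\abs{\det(T|_{M})}=\prod_{k=1}^{n}\abs{\lambda_{k}(T)}$; when $\abs{\lambda_{n}(T)}$ coincides with the modulus of further eigenvalues one is free to choose which of the corresponding generalized eigenvectors lie in $M$, since this changes neither side. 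Here $\det(T|_{M})$ is the determinant of the endomorphism $T|_{M}$ of the $n$‑dimensional space $M$, which is independent of the choice of basis and of norm on $M$.

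Now equip $M$ with the Euclidean norm $\abs{\cdot}$ whose unit ball is a maximal volume ellipsoid inscribed in the unit ball of $(M,\norm{\cdot})$; John's theorem yields $\norm{x}\leq\abs{x}\leq\sqrt{n}\,\norm{x}$ for all $x\in M$. Let $s_{1}(T|_{M})\geq\dots\geq s_{n}(T|_{M})$ denote the singular values of $T|_{M}$ viewed as an operator on the Hilbert space $(M,\abs{\cdot})$; the singular value decomposition gives $\abs{\det(T|_{M})}=\prod_{k=1}^{n}s_{k}(T|_{M})$, so it suffices to establish $s_{k}(T|_{M})\leq\sqrt{n}\,a_{k}(T)$ for each $1\leq k\leq n$. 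Fix $\e>0$ and pick $F$ with $\ran(F)<k$ and $\norm{T-F}\leq a_{k}(T)+\e$. Put $V:=M\cap\ker F$; the natural map $M\to X/\ker F$ has kernel $V$, so $\dim(M/V)\leq\dim(X/\ker F)=\ran(F)<k$. For $x\in V$ we have $Tx=(T-F)x$, which lies in $M$ by $T$‑invariance, hence $\abs{Tx}\leq\sqrt{n}\,\norm{Tx}\leq\sqrt{n}\,\norm{T-F}\,\norm{x}\leq\sqrt{n}\,\norm{T-F}\,\abs{x}$. Since the $k$‑th singular value of a Hilbert space operator is the infimum, over subspaces $W$ of codimension less than $k$, of the operator norm of its restriction to $W$, taking $W=V$ gives $s_{k}(T|_{M})\leq\sqrt{n}\,\norm{T-F}\leq\sqrt{n}\,(a_{k}(T)+\e)$. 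Letting $\e\to 0$ and multiplying over $k=1,\dots,n$ gives
\[ \prod_{k=1}^{n}\abs{\lambda_{k}(T)}=\prod_{k=1}^{n}s_{k}(T|_{M})\leq n^{n/2}\prod_{k=1}^{n}a_{k}(T). \]

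The step I expect to be the main obstacle is the finite‑dimensional reduction: one must verify that an $n$‑dimensional $T$‑invariant subspace realizing exactly the first $n$ eigenvalues can always be produced, which requires some care from the Riesz theory of compact operators when several eigenvalues share the modulus $\abs{\lambda_{n}(T)}$. The second somewhat delicate point is the bound $s_{k}(T|_{M})\leq\sqrt{n}\,a_{k}(T)$: simply restricting an almost optimal finite rank approximant of $T$ to $M$ would cost the norm of a projection onto $M$, and it is exactly the device of intersecting $\ker F$ with $M$ and counting codimensions that sidesteps this loss and produces the constant $n^{n/2}$.
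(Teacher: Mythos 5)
The paper does not prove this theorem: Weyl's inequality for approximation numbers is imported as a known result from the $s$-number literature (the remark following the statement cites a reference only for the optimality of the factor $n^{n/2}$). So there is no internal proof to compare against; I can only assess your argument on its own.

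Your proof is correct, and it is in fact the standard argument (essentially K\"onig's / Pietsch's). The three pillars all hold up. First, the reduction to an $n$-dimensional $T$-invariant subspace $M$ with $\abs{\det(T|_{M})}=\prod_{k=1}^{n}\abs{\lambda_{k}(T)}$ follows from the Riesz functional calculus (which isolates the finitely many eigenvalues of modulus $\geq\abs{\lambda_{n}(T)}$ in a finite-dimensional spectral subspace $M'$) together with Schur triangularization of $T|_{M'}$ with diagonal entries arranged in nonincreasing modulus; taking the span of the first $n$ basis vectors gives an invariant $M$ whose restriction has exactly $\lambda_{1},\dots,\lambda_{n}$ as eigenvalues, and your remark about ties at $\abs{\lambda_{n}(T)}$ correctly disposes of the ambiguity. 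Second, John's theorem gives the $\sqrt{n}$-equivalent Euclidean norm on $M$, and $\abs{\det(T|_{M})}=\prod_{k} s_{k}(T|_{M})$ there. Third, the estimate $s_{k}(T|_{M})\leq\sqrt{n}\,a_{k}(T)$ via $V=M\cap\ker F$ is exactly the device that avoids paying a projection constant: $\dim(M/V)\leq\dim(X/\ker F)=\operatorname{rank}F<k$, and for $x\in V$ one has $Tx=(T-F)x\in M$, so the Courant--Fischer characterization of $s_{k}$ on the Hilbert space $(M,\abs{\cdot})$ applies with $W=V$. Multiplying over $k$ gives precisely the stated bound with constant $n^{n/2}$.
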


For a Hilbert space the inequality holds without the $n^{n/2}$ factor, it is known that for a general Banach space the term $n^{n/2}$ is optimal \cite{MR2180891}. 

\begin{theorem}
	Let $\H$ be a Hilbert space and $T:\H \to \H$ be a compact operator. Then $T$ has an expansion 
	\[ T=\sum_{k=1}^{N}s_{k}(T)\inn{\phi_{k} , \cdot}\psi_{k} \]
	where $N$ is either a non-negative integer or $\infty$, $s_{n}(T)$ are the singular values of $T$, and $\set{\phi_{n}}$ and $\set{\psi_{n}}$ are orthonormal sets. 
\end{theorem}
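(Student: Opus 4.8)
The statement is the Schmidt (singular value) decomposition of a compact operator, and the plan is to deduce it from the spectral theorem for compact self-adjoint operators applied to $S:=T^{*}T$. Since $T$ is compact and $T^{*}$ is bounded, $S$ is compact; it is self-adjoint and positive because $\inn{Sx,x}=\norm{Tx}^{2}\geq 0$ for all $x\in\H$. By the spectral theorem there is an at most countable orthonormal family $\set{\phi_{k}}_{k=1}^{N}$ (with $N$ a non-negative integer or $\infty$) and reals $\mu_{1}\geq\mu_{2}\geq\cdots>0$, with $\mu_{k}\to 0$ when $N=\infty$, such that $S\phi_{k}=\mu_{k}\phi_{k}$, the closed linear span of $\set{\phi_{k}}$ is $\ol{\ran S}=(\ker S)^{\perp}$, and $Sx=\sum_{k=1}^{N}\mu_{k}\inn{\phi_{k},x}\phi_{k}$ for every $x$. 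I set $s_{k}(T):=\sqrt{\mu_{k}}$; these are by definition the singular values of $T$, and $N$ is the number of nonzero ones (if $T=0$ then $N=0$ and the claimed expansion is the empty sum).

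Next I would build the second family. For $1\leq k\leq N$ define $\psi_{k}:=s_{k}(T)^{-1}T\phi_{k}$. These are orthonormal, since
\[ \inn{\psi_{j},\psi_{k}}=\frac{\inn{T\phi_{j},T\phi_{k}}}{s_{j}(T)s_{k}(T)}=\frac{\inn{S\phi_{j},\phi_{k}}}{s_{j}(T)s_{k}(T)}=\frac{\mu_{j}\inn{\phi_{j},\phi_{k}}}{s_{j}(T)s_{k}(T)}=\delta_{jk}. \]
To get the expansion, I first note $\ker T=\ker S$, which is immediate from $\norm{Tx}^{2}=\inn{Sx,x}$. Given $x\in\H$, write $x=x_{0}+\sum_{k=1}^{N}\inn{\phi_{k},x}\phi_{k}$ with $x_{0}\in\ker S=\ker T$ (because $\set{\phi_{k}}$ is an orthonormal basis of $(\ker S)^{\perp}$); applying $T$ and using boundedness,
\[ Tx=Tx_{0}+\sum_{k=1}^{N}\inn{\phi_{k},x}T\phi_{k}=\sum_{k=1}^{N}s_{k}(T)\inn{\phi_{k},x}\psi_{k}, \]
the series converging in $\H$ since $\sum_{k}\abs{\inn{\phi_{k},x}}^{2}\leq\norm{x}^{2}$, the $s_{k}(T)$ are bounded, and $\set{\psi_{k}}$ is orthonormal. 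This is exactly the asserted identity $T=\sum_{k=1}^{N}s_{k}(T)\inn{\phi_{k},\cdot}\psi_{k}$. When $N=\infty$ the convergence is in fact in operator norm: with $T_{n}:=\sum_{k=1}^{n}s_{k}(T)\inn{\phi_{k},\cdot}\psi_{k}$, Bessel's inequality gives $\norm{(T-T_{n})x}^{2}=\sum_{k>n}s_{k}(T)^{2}\abs{\inn{\phi_{k},x}}^{2}\leq s_{n+1}(T)^{2}\norm{x}^{2}$, so $\norm{T-T_{n}}\leq s_{n+1}(T)\to 0$.

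I do not expect a genuine obstacle here, as this is a standard result; the only points needing care are invoking the spectral theorem in exactly the right form (an eigenbasis of $(\ker S)^{\perp}$ with eigenvalues decreasing to $0$, in the infinite case) and the elementary identity $\ker T=\ker T^{*}T$. For the use made of this theorem in the paper one also wants the $s_{k}(T)$ to coincide with the approximation numbers $a_{k}(T)$ entering Theorem \ref{thm:weyl}; this drops out of the expansion, since $\operatorname{rank}T_{n}\leq n$ gives $a_{n+1}(T)\leq\norm{T-T_{n}}=s_{n+1}(T)$, while for any $F$ with $\operatorname{rank}F\leq n$ one may choose a unit vector $v\in\spn\set{\phi_{1},\ldots,\phi_{n+1}}\cap\ker F$ and compute $\norm{(T-F)v}=\norm{Tv}\geq s_{n+1}(T)$, whence $a_{n+1}(T)\geq s_{n+1}(T)$.
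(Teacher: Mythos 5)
The paper states this theorem as background (it is the classical Schmidt/singular value decomposition) and does not supply a proof, so there is no in-paper argument to compare against. Your proof is the standard one — diagonalize the compact, self-adjoint, positive operator $T^{*}T$ via the spectral theorem, set $s_{k}(T)=\sqrt{\mu_{k}}$ and $\psi_{k}=s_{k}(T)^{-1}T\phi_{k}$, check orthonormality of $\set{\psi_{k}}$, use $\ker T=\ker T^{*}T$ to recover the expansion, and verify operator-norm convergence via Bessel — and each step is correct. Your closing remark identifying $s_{k}(T)$ with the approximation numbers $a_{k}(T)$ on a Hilbert space is also correct and is exactly what the paper needs downstream (e.g. in the proposition bounding $\norm{J_{n}-J}$ and in invoking Weyl's inequality), since the paper switches between ``singular values'' and ``approximation numbers'' without comment.
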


\begin{theorem}[Bandtlow-Jenkinson \cite{MR2414325}]
	Suppose that $U_{1},U_{2}$ are open subsets of $\C^{d}$ with $U_{2}$ compactly contained in $U_{1}$ define the operator $J:A^{2}(U_{1})\to A^{2}(U_{2})$ by
	\[ Jf = f|_{U_{2}}. \]
	Then $J$ is compact and there exist constants $C, c >0$ such that 
	\[  s_{n}(J)\leq Ce^{-cn^{1/d}}. \]
\end{theorem}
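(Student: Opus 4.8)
This is the standard singular-value estimate for the canonical restriction map between Bergman spaces on nested domains, and the plan is to recall its short proof. Since $A^{2}(U_{1})$ and $A^{2}(U_{2})$ are Hilbert spaces, the singular values of $J$ coincide with its approximation numbers, $s_{n}(J)=a_{n}(J)$, so it suffices to produce, for every $n$, a finite-rank operator $F_{n}\colon A^{2}(U_{1})\to A^{2}(U_{2})$ with $\ran(F_{n})<n$ and $\norm{J-F_{n}}\le Ce^{-cn^{1/d}}$; compactness of $J$ is then immediate from $a_{n}(J)\to 0$. I would build the operators $F_{n}$ out of truncated Taylor expansions over a fixed finite cover of $\overline{U_{2}}$ by small polydiscs.

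First the local estimate. Because $\overline{U_{2}}$ is compact and contained in the open set $U_{1}$, there is $\delta>0$ such that the closed polydisc of polyradius $3\delta$ about any point of $\overline{U_{2}}$ lies in $U_{1}$. Fix $a\in\overline{U_{2}}$ and expand $f\in A^{2}(U_{1})$ as $f(z)=\sum_{\alpha}c_{\alpha}(z-a)^{\alpha}$. Applying the Cauchy integral formula on the polydisc of polyradius $2\delta$ about $a$ and using that $|f|^{2}$ is plurisubharmonic (so that its supremum over that polydisc is controlled by $\norm{f}_{A^{2}(U_{1})}$, via a $\delta$-neighbourhood that sits inside the $3\delta$-polydisc) gives
\[ |c_{\alpha}|\le C(2\delta)^{-|\alpha|}\norm{f}_{A^{2}(U_{1})},\qquad C=C(d,\delta). \]
Consequently, on the polydisc of polyradius $\delta$ about $a$ the degree-$\le N$ Taylor truncation of $f$ errs by at most $\norm{f}_{A^{2}(U_{1})}\sum_{|\alpha|>N}C2^{-|\alpha|}$; since the number of multi-indices of total degree $k$ in $d$ variables is $\binom{k+d-1}{d-1}=O(k^{d-1})$, this tail is $O\big((3/4)^{N}\big)\norm{f}_{A^{2}(U_{1})}$. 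The map sending $f$ to its degree-$\le N$ Taylor polynomial at $a$ is linear of rank $\binom{N+d}{d}=O(N^{d})$.

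To globalise, cover the compact set $\overline{U_{2}}$ by finitely many such polydiscs $D_{1},\dots,D_{M}$ centred at $a_{1},\dots,a_{M}$, disjointify $\{D_{j}\cap U_{2}\}$ into Borel sets $E_{1},\dots,E_{M}$ with $\bigsqcup_{j}E_{j}=U_{2}$, and put $F_{N}f=\sum_{j}\mathbf{1}_{E_{j}}\,(\text{degree-}\le N\text{ Taylor polynomial of }f\text{ at }a_{j})$. As an operator into $L^{2}(U_{2})$ this has rank at most $M\binom{N+d}{d}=O(N^{d})$, and summing the local bounds gives $\norm{Jf-F_{N}f}_{L^{2}(U_{2})}\le C'(3/4)^{N}\norm{f}_{A^{2}(U_{1})}$. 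Composing with the orthogonal Bergman projection $P\colon L^{2}(U_{2})\to A^{2}(U_{2})$, which fixes $Jf$ and is a contraction, yields $PF_{N}$ of the same rank mapping into $A^{2}(U_{2})$ with $\norm{J-PF_{N}}\le C'(3/4)^{N}$. Finally, for given $n$ choose $N$ maximal with $M\binom{N+d}{d}<n$; then $N\ge c_{0}n^{1/d}$ once $n$ is large, so $a_{n}(J)\le C'(3/4)^{c_{0}n^{1/d}}$, and after absorbing the finitely many small $n$ into the constant we obtain $s_{n}(J)=a_{n}(J)\le Ce^{-cn^{1/d}}$; in particular $J$ is compact.

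The only real work here is bookkeeping. Converting the $A^{2}$-norm bound into the pointwise Cauchy estimate on the Taylor coefficients is routine once the polyradii are pinned down, via subharmonicity of $|f|^{2}$; the one point requiring care is keeping the total rank at $O(N^{d})$ while summing the $M$ local errors, so that inverting the relation $n\leftrightarrow N^{d}$ reproduces exactly the exponent $1/d$ rather than a smaller one. Everything else is soft. Alternatively one may simply quote the cited theorem of Bandtlow and Jenkinson, whose proof is in essence this argument packaged through a comparison of Bergman norms on nested domains; that formulation has the further benefit of an explicit constant $c$ controlled by the geometric separation of $\partial U_{1}$ from $\partial U_{2}$.
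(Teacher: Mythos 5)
Your argument is correct, and it is essentially the standard proof of this estimate. Note, though, that the paper itself does not prove this statement; it is quoted directly from Bandtlow--Jenkinson \cite{MR2414325}, so there is no ``paper proof'' to compare against. Your route---cover $\overline{U_{2}}$ by polydiscs, truncate Taylor series to degree $N$ to get a rank-$O(N^{d})$ approximant with $O(\theta^{N})$ error, project back into $A^{2}(U_{2})$, and invert $n \leftrightarrow N^{d}$---uses the same two essential ingredients as the cited proof (exponential decay of Taylor coefficients from the compact inclusion, and the dimension count $\binom{N+d}{d}=O(N^{d})$). Bandtlow and Jenkinson organize the argument slightly differently, reducing to the model case of concentric polydiscs where the monomials are an explicit orthogonal basis and the singular values of $J$ can be written down exactly, then transferring to general nested domains by monotonicity of singular values under domain inclusion; this pays off in explicit, geometry-dependent constants, whereas your version gives the same exponent with softer constants. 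Both are valid, and your bookkeeping (in particular, composing with the Bergman projection to land in $A^{2}(U_{2})$ without changing the rank, and checking that the $M$-fold cover only costs a constant in rank so the exponent stays $1/d$) is handled correctly.
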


\begin{proposition}
	Let $J_{n} = \sum_{k=1}^{n}s_{k}(J)\inn{\phi_{k} , \cdot}\psi_{k} $. Then there exist constants $C,c_{\A}>0$
	\[ \norm{J_{n}-J}\leq Ce^{-c_{\A}n^{1/d}}. \]
\end{proposition}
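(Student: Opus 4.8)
The plan is to read the estimate directly off the singular value decomposition. Since $J_{n}$ is exactly the rank-$\le n$ truncation of $J=\sum_{k=1}^{N}s_{k}(J)\inn{\phi_{k},\cdot}\psi_{k}$, the difference is
\[ J-J_{n}=\sum_{k=n+1}^{N}s_{k}(J)\inn{\phi_{k},\cdot}\psi_{k}, \]
which is again in ``diagonal'' form with respect to the orthonormal systems $\set{\phi_{k}}$ and $\set{\psi_{k}}$, so its operator norm should be controlled by the largest surviving singular value $s_{n+1}(J)$.

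Concretely, I would first check that $\norm{J-J_{n}}\le s_{n+1}(J)$. For $f\in A^{2}(U)$, using orthonormality of $\set{\psi_{k}}$, that the $s_{k}(J)$ are non-increasing, and Bessel's inequality for $\set{\phi_{k}}$,
\[ \norm{(J-J_{n})f}^{2}=\sum_{k\ge n+1}s_{k}(J)^{2}\abs{\inn{\phi_{k},f}}^{2}\le s_{n+1}(J)^{2}\sum_{k\ge n+1}\abs{\inn{\phi_{k},f}}^{2}\le s_{n+1}(J)^{2}\norm{f}^{2}. \]
If $N\le n$ the sum is empty and $J_{n}=J$, so there is nothing to prove in that case.

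Second, I would invoke the Bandtlow--Jenkinson theorem stated immediately above: $W$ may be chosen compactly contained in $U$ (legitimate since $K$ is a compact subset of the open set $U$ by the discussion following Theorem \ref{thm:conesmorris}), so there are constants $C,c>0$, depending only on $U$, $W$ and $d$, and hence only on $\A$, with $s_{m}(J)\le Ce^{-cm^{1/d}}$ for all $m\ge 1$. Applying this with $m=n+1$ and using $(n+1)^{1/d}\ge n^{1/d}$ yields
\[ \norm{J-J_{n}}\le s_{n+1}(J)\le Ce^{-c(n+1)^{1/d}}\le Ce^{-cn^{1/d}}, \]
so the proposition follows with $c_{\A}=c$.

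There is really no obstacle here: the argument is just the Eckart--Young/Schmidt bound on truncating an SVD, combined with the already-cited decay of the singular values of a restriction of Bergman spaces. The only two points that require a little care are that the bound must hold uniformly over the unit ball of $A^{2}(U)$ — which is why one passes through Bessel's inequality rather than evaluating on a single eigenfunction — and that the constant $c_{\A}$ is not universal but inherits its dependence on $\A$ from the construction of the domains $U$ and $W$ in Theorem \ref{thm:conesmorris}.
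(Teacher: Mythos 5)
Your proof is correct, and it takes a somewhat different route from the paper's. The paper bounds $\norm{J_{n}-J}$ crudely by the triangle inequality, $\norm{J_{n}-J}\leq \sum_{k>n}s_{k}(J)$, and then estimates the tail sum by comparison with $\int_{n}^{\infty}e^{-cx^{1/d}}dx$ (an incomplete Gamma function), which forces it to settle for a slightly degraded exponent $c_{\A}<c$. You instead exploit the orthonormality of $\set{\psi_{k}}$ and Bessel's inequality for $\set{\phi_{k}}$ to get the sharp Schmidt/Eckart--Young bound $\norm{J-J_{n}}\leq s_{n+1}(J)$, which immediately gives the claim with $c_{\A}=c$ and no integral estimate at all. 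Your version is both shorter and quantitatively stronger; the only caveat, which applies equally to the paper's argument, is that the singular value expansion theorem is stated in the text for an operator on a single Hilbert space, whereas $J:A^{2}(U)\to A^{2}(W)$ maps between two different Hilbert spaces --- the expansion and your Bessel argument go through verbatim in that setting, but it is worth noting that $\set{\phi_{k}}\subset A^{2}(U)$ and $\set{\psi_{k}}\subset A^{2}(W)$ are orthonormal in different spaces.
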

\begin{proof}
	Notice that
	\begin{align*}
	\norm{J_{n}-J} &= \norm{\sum_{k=n}^{\infty}s_{k}(T)\inn{\phi_{k} , \cdot}\psi_{k}}\\
	&\leq \sum_{k=n}^{\infty}s_{k}(T)\\
	&\leq C \sum_{k=n}^{\infty}e^{-ck^{1/d}}.
	\end{align*}
	Consider
	\begin{align*}
	\sum_{k=n}^{\infty}e^{-c_{\A}k^{1/d}}& \leq e^{-c n^{1/d}}+\int_{n}^{\infty}e^{-cx^{1/d}}dx\\
	&= e^{-c n^{1/d}} + d a^{-d} \Gamma(d, cn^{1/d})\\
	&= e^{-c n^{1/d}} + d a^{-d} (d-1)! e^{-cn^{1/d}}\sum_{k=0}^{d-1}\frac{(cn^{1/d})^{k}}{k!}\\
	&= O(e^{-c_{\A}n^{1/d}})
	\end{align*}
	where $c_{\A}<c$.
\end{proof}

\begin{proposition}\label{prop:finite_dim_projections}
	For each allowed word, $I$, of length $m$ pick a point $x_{I}$ in the cylinder set defined by the word $I$, $[I]$. Define an operator $E_{n,m}:\B(U)\to \B(W)$ by
	\[ E_{n,m}f(x,z) = \sum_{\abs{I}=m}J_{n}f(x_{I},z)\chi_{[I]}(x). \]
	Then the following are true:
	\begin{enumerate}
		\item 
		$\dim \ran(E_{n,m})\leq n \cdot \abs{L_{m}}$.
		
		\item
		For all $x \in \S_{T}^{+}$ and $f \in \B(U)$ we have that
		\[ \norm{Jf(x,\cdot)-E_{n,m}f(x,\cdot)}_{A^{2}(W)} &\leq \norm{J}_{A^{2}(U)\to A^{2}(W)}\sup_{z \in U}\var_{m}f(\cdot ,z) \\ &\;\;\;\;\;\;\;\;+ \norm{J-J_{n}}_{A^{2}(U) \to A^{2}(W)} \sup_{x}\norm{f(x,\cdot)}_{A^{2}(U)}. \]
		
		\item 
		There is a constant $C>0$ such that
		\[ \var_{k}(Jf-E_{n,m}f)(\cdot , z)\leq C\sup_{z \in U}\var_{k}f(\cdot , z) \]
		for any $z \in K$ and $f \in \B(U)$.
		
		\item \label{prop:finite_dim_projections/finalvarbound}
		There is a constant $C>0$ such that 
		\[ \var_{k}(Jf-E_{n,m}f)(\cdot , z)\leq C\min \set{\kappa_{k}, \kappa_{m} + \norm{J-J_{n}}_{A^{2}(U)\to A^{2}(W)} }. \]
		for all $z \in K$ and $f \in \B(U)$ with $\norm{f}_{\B(U)}\leq 1$.
	\end{enumerate}
\end{proposition}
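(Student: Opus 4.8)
The plan is to verify the four claims in order, each time unpacking the definition $E_{n,m}f(x,z) = \sum_{\abs{I}=m}J_{n}f(x_{I},z)\chi_{[I]}(x)$ and exploiting that $E_{n,m}$ "freezes" the $x$-variable on each cylinder of length $m$ and then applies the finite-rank operator $J_{n}$ in the $z$-variable. For claim (1), the range of $E_{n,m}$ is contained in the span of the functions $(x,z)\mapsto \psi_{k}(z)\chi_{[I]}(x)$ with $1 \le k \le n$ and $I$ ranging over allowed words of length $m$, since $J_{n} = \sum_{k=1}^{n} s_{k}(J)\inn{\phi_{k},\cdot}\psi_{k}$ has rank at most $n$; hence $\dim\ran(E_{n,m}) \le n\cdot\abs{L_{m}}$.

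For claim (2), I would fix $x \in \S_{T}^{+}$, let $I$ be the length-$m$ word with $x \in [I]$, and write $Jf(x,\cdot) - E_{n,m}f(x,\cdot) = \big(Jf(x,\cdot) - Jf(x_{I},\cdot)\big) + \big(Jf(x_{I},\cdot) - J_{n}f(x_{I},\cdot)\big)$. The first bracket is $J$ applied to $f(x,\cdot) - f(x_{I},\cdot)$, whose $A^{2}(U)$-norm is controlled in the supremum over $z$ by $\var_{m}f(\cdot,z)$ since $x$ and $x_{I}$ agree in the first $m$ coordinates; applying $\norm{J}_{A^{2}(U)\to A^{2}(W)}$ gives the first term. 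The second bracket is $(J-J_{n})$ applied to $f(x_{I},\cdot)$, bounded by $\norm{J-J_{n}}_{A^{2}(U)\to A^{2}(W)}\sup_{x}\norm{f(x,\cdot)}_{A^{2}(U)}$. The triangle inequality assembles these into the stated bound. Claim (3) is the coarse Lipschitz estimate: for $z \in K$ and $x,y$ agreeing in the first $k$ coordinates, I expand $(Jf-E_{n,m}f)(x,z) - (Jf-E_{n,m}f)(y,z)$. When $k \ge m$, $x$ and $y$ lie in the same cylinder $[I]$, so the $E_{n,m}$ terms cancel and what remains is $Jf(x,z)-Jf(y,z)$, bounded by $\var_{k}f(\cdot,z)$; when $k < m$ the $\chi_{[I]}$ terms may differ, and I bound each piece by $\norm{J}$ resp.\ $\norm{J_{n}} \le \norm{J}$ times $\sup_{z}\var_{k}f(\cdot,z)$ (using that $\var_{k}$ dominates the oscillation controlling the relevant differences), absorbing everything into a single constant $C$.

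Claim (4) is where the real content lies, and I expect the bookkeeping of the two regimes to be the main obstacle. Assume $\norm{f}_{\B(U)}\le 1$. The bound $\var_{k}(Jf-E_{n,m}f)(\cdot,z) \le C\kappa_{k}$ follows from claim (3) together with $\var_{k}f(\cdot,z) \le \abs{f}_{\set{\kappa_{n}}}\kappa_{k} \le \kappa_{k}$. For the other half of the minimum, I use claim (2) to control the $A^{2}(W)$-norm of $Jf(x,\cdot)-E_{n,m}f(x,\cdot)$ uniformly in $x$ by $\norm{J}\,\var_{m}f(\cdot,z) + \norm{J-J_{n}}\sup_{x}\norm{f(x,\cdot)}_{A^{2}(U)} \le C(\kappa_{m} + \norm{J-J_{n}}_{A^{2}(U)\to A^{2}(W)})$, using $\var_{m}f \le \kappa_{m}$ and $\norm{f}_{\B(U)}\le1$; then, since $K \subset W$ is compactly contained, the Bergman-space analytic estimate (Lemma \ref{lem:analyticinequalities}, as used in Lemma \ref{lem:variationinequality}) converts this uniform $A^{2}(W)$ control into a pointwise bound on $z \in K$ for the variation $\var_{k}$ of the difference — the key being that bounding the $A^{2}(W)$-norm of each slice $g(x,\cdot):=Jf(x,\cdot)-E_{n,m}f(x,\cdot)$ and of each difference slice $g(x,\cdot)-g(y,\cdot)$ gives, on the smaller set $K$, control of $\abs{g(x,z)-g(y,z)}$, hence of $\var_{k}g(\cdot,z)$, by the same $C(\kappa_{m}+\norm{J-J_{n}})$. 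Taking the smaller of the two bounds yields the stated minimum. The one point to handle carefully is that the $\var_m f(\cdot,z)$ term in claim (2) must be estimated uniformly in $z \in U$ before restricting to $K$, which is legitimate because $f \in \B(U)$ controls $\var_{m}f(\cdot,z)$ for every $z \in U$ at once.
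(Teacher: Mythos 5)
Your proposal is correct and follows essentially the same route as the paper: the same rank count for (1), the same two-term decomposition through $Jf(x_{I},\cdot)$ for (2), the same $k\geq m$ versus $k<m$ case split for (3), and for (4) the same combination of (3) for the $\kappa_{k}$ half and of (2) together with the pointwise-by-$A^{2}(W)$ evaluation estimate on $K$ (Lemma \ref{lem:analyticinequalities}) for the $\kappa_{m}+\norm{J-J_{n}}$ half. The only cosmetic difference is that you bound $\norm{J_{n}}$ by $\norm{J}$ where the paper uses $\sum_{k}s_{k}(J)$; both give a constant uniform in $n$ and $m$, so nothing is lost.
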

\begin{proof}
	\begin{enumerate}
		\item 
		Notice that $E_{n,m}f$ is contained in
		\[ \spn \set{\psi_{k}(z)\chi_{I}(x) : \abs{I}=m, 1 \leq k \leq n}. \]
		
		\item 
		Let $x \in \S_{T}^{+}$ and notice that
		\begin{align*}
		&\norm{Jf(x,\cdot)-E_{n,m}f(x,\cdot)}_{A^{2}(W)} \\
		&= \norm{Jf(x,\cdot)- J_{n}f(x_{x_{0}\cdots x_{m-1}}, \cdot)}_{A^{2}(W)}\\
		&\leq \norm{Jf(x,\cdot)- Jf(x_{x_{0}\cdots x_{m-1}}, \cdot)}_{A^{2}(W)} + \norm{Jf(x_{x_{0}\cdots x_{m-1}}, \cdot)-J_{n}f(x_{x_{0}\cdots x_{m-1}}, \cdot)}_{A^{2}(W)}\\
		&\leq \norm{J}_{A^{2}(U)\to A^{2}(W)}\sup_{z \in U}\var_{m}f(\cdot ,z) + \norm{J-J_{n}}_{A^{2}(U) \to A^{2}(W)} \sup_{x}\norm{f(x,\cdot)}_{A^{2}(U)}.
		\end{align*}	
		
		\item 
		Suppose that $x,y \in \S_{T}^{+}$ and $x_{i}=y_{i}$ for all $0 \leq i \leq k-1$. Notice that
		\begin{align*}
		&\abs{f(x,z)-E_{n,m}f(x,z)-f(y,z)+E_{n,m}f(y,z)}\\
		&\leq \abs{f(x,z)-f(y,z)}+\abs{E_{n,m}f(x,z)-E_{n,m}f(y,z)}\\
		&\leq \var_{k}f(\cdot ,z)  + \abs{E_{n,m}f(x,z)-E_{n,m}f(y,z)}.
		\end{align*}
		If $k \geq m$ then 
		\[ E_{n,m}f(x,z)=J_{n}f(x_{x_{0} \cdots x_{m-1}} , z)=J_{n}f(x_{y_{0}\cdots y_{m-1}} , z) =E_{n,m}f(y,z).\]
		If $k<m$ then 
		\begin{align*}
		\abs{E_{n,m}f(x,z)-E_{n,m}f(y,z)}&= \abs{J_{n}f(x_{x_{0}\cdots x_{m-1}} , z) - J_{n}f(x_{y_{0}\cdots y_{m-1}} , z)}\\
		& \leq C_{K}\norm{J_{n}f(x_{x_{0}\cdots x_{m-1}} , z) - J_{n}f(x_{y_{0}\cdots y_{m-1}} , z)}_{A^{2}(W)}\\
		&\leq C_{K}\norm{J_{n}}_{A^{2}(U)\to A^{2}(W)}\norm{f(x_{x_{0}\cdots x_{m-1}} , \cdot)-f(x_{y_{0}\cdots y_{m-1}}, \cdot)}_{A^{2}(U)}\\
		&\leq C_{K}\norm{J_{n}}_{A^{2}(U)\to A^{2}(W)} \sup_{z \in U}\var_{k}f(\cdot , z)\\
		&\leq C_{K}\sum_{k=1}^{n}s_{k}(J) \sup_{z \in U}\var_{k}f(\cdot , z) \\
		&\leq C_{K}\sum_{k=1}^{\infty}s_{k}(J) \sup_{z \in U}\var_{k}f(\cdot , z).
		\end{align*}
		
		\item 
		Let $f \in \B(U)$ with $\norm{f}_{\B(U)}\leq 1$. Notice that for any $z \in K$ we have that
		\[ \var_{k}(Jf-E_{n,m}f)(\cdot , z)\leq 2\sup_{z \in K}\norm{Jf(\cdot, z)-E_{n,m}f(\cdot , z)}_{\infty}. \]
		There is a constant $C_{K}$ such that for any $x \in \S_{T}^{+}$ we have
		\begin{align*}
		\abs{Jf(x,z)-E_{n,m}f(x,z)} &\leq C_{K} \norm{Jf(x, \cdot) - E_{n,m}f(x,\cdot)}_{A^{2}(W)}\\
		& \leq  \norm{J}_{A^{2}(U)\to A^{2}(W)}\sup_{z \in U}\var_{m}f(\cdot ,z) \\ 
		&\;\;\;\;\;\;\;\;+ \norm{J-J_{n}}_{A^{2}(U) \to A^{2}(W)} \sup_{x}\norm{f(x,\cdot)}_{A^{2}(U)}\\
		& \leq \norm{J}_{A^{2}(U)\to A^{2}(W)}\kappa_{m} + e^{-c_{\A} n^{1/d}}.
		\end{align*}
		On the other hand
		\begin{align*}
		\var_{k}(Jf-E_{n,m}f)(\cdot , z)& \leq \var_{k}Jf(\cdot, z)+\var_{k}E_{n,m}f(\cdot , z)\\
		&\leq 2 \var_{k}f(\cdot,z).
		\end{align*} 
	\end{enumerate}
\end{proof}

\begin{lemma}\label{lem:lipbound}
	Let $\beta \in \R$. There exists a constant $C>0$ such that
	\[ \abs{\L_{\beta}f(\cdot , z) - \L_{\beta}E_{n,m}f(\cdot , z)}_{\set{\kappa_{k}}} \leq C\norm{f}_{\B(U)}\left(\max\set{\sup_{k\geq 1}\min\set{\frac{\kappa_{k+1}}{\kappa_{k}}, \frac{\kappa_{m}+e^{-c_{\A}n^{1/d}}}{\kappa_{k}} } , \kappa_{m}+e^{-c_{\A}n^{1/d}} }\right) \]
	for all $z \in U$.
\end{lemma}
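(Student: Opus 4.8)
The plan is to apply Lemma~\ref{lem:variationinequality} to the function $f - E_{n,m}f$ in place of $f$, and then estimate each of the three resulting terms using the four conclusions of Proposition~\ref{prop:finite_dim_projections}. Note first that $J(f - E_{n,m}f) = Jf - E_{n,m}f$ since $E_{n,m}f$ already lands in $A^2(W)$; more importantly, for the purposes of Lemma~\ref{lem:variationinequality} the relevant quantities are the $A^2(W)$-norm of $(f-E_{n,m}f)(x,\cdot)$ (which by Proposition~\ref{prop:finite_dim_projections}(2) and (4) is $O(\kappa_m + \|J-J_n\|) = O(\kappa_m + e^{-c_\A n^{1/d}})$ uniformly in $x$, once we use the estimate $\|J-J_n\| = O(e^{-c_\A n^{1/d}})$ from the earlier proposition), and the quantity $\sup_{z\in K}\var_{k+1}(f - E_{n,m}f)(\cdot,z)$, which is exactly what Proposition~\ref{prop:finite_dim_projections}(4) bounds by $C\min\{\kappa_{k+1}, \kappa_m + e^{-c_\A n^{1/d}}\}$. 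So Lemma~\ref{lem:variationinequality} applied to $f - E_{n,m}f$ gives, for each $k\geq 1$ and $z\in U$,
\[
\var_k(\L_\beta f - \L_\beta E_{n,m}f)(\cdot,z) \leq C\,\|f\|_{\B(U)}\Bigl( (\kappa_m + e^{-c_\A n^{1/d}})\,\var_{k+1}\A + \min\{\kappa_{k+1}, \kappa_m + e^{-c_\A n^{1/d}}\} + (\kappa_m + e^{-c_\A n^{1/d}})\,\var_{k+1}\varphi \Bigr).
\]

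Next I would divide through by $\kappa_k$ and take the supremum over $k\geq 1$, which is the definition of $|\cdot|_{\{\kappa_k\}}$. The first and third terms are handled by the Hypothesis \textbf{H2}$(\{\kappa_n\})$ bound $\var_{k+1}\A, \var_{k+1}\varphi = O(\kappa_{k+1}) = O(\kappa_k)$ (since $\kappa_k$ is decreasing), so $\kappa_k^{-1}(\kappa_m + e^{-c_\A n^{1/d}})\var_{k+1}\A \leq C(\kappa_m + e^{-c_\A n^{1/d}})$ and likewise for $\varphi$; these contribute the $\kappa_m + e^{-c_\A n^{1/d}}$ term inside the outer $\max$ in the statement. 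The middle term contributes $\sup_{k\geq 1}\kappa_k^{-1}\min\{\kappa_{k+1}, \kappa_m + e^{-c_\A n^{1/d}}\} = \sup_{k\geq 1}\min\{\kappa_{k+1}/\kappa_k, (\kappa_m + e^{-c_\A n^{1/d}})/\kappa_k\}$, which is precisely the first argument of the outer $\max$. Combining the three contributions into a single maximum (up to adjusting the constant $C$) yields exactly the claimed inequality.

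The only mild subtlety — and the step I would be most careful about — is matching up the $A^2(W)$ versus $A^2(U)$ norms that appear in the two cited results: Lemma~\ref{lem:variationinequality} is stated for $f\in B(U,\cdot)$ with $W$ an intermediate open set $K\subset W\subset U$ and references $\sup_x\|f(x,\cdot)\|_{A^2(W)}$, whereas Proposition~\ref{prop:finite_dim_projections} produces bounds in terms of $\|f(x,\cdot)\|_{A^2(U)}$ and the restriction operator $J$. Since $W\subseteq U$, the restriction $A^2(U)\to A^2(W)$ is norm-nonincreasing (indeed $\|Jg\|_{A^2(W)}\leq \|g\|_{A^2(U)}$ because the integral defining the $A^2(W)$ norm is over a subset), so $\sup_x\|(f-E_{n,m}f)(x,\cdot)\|_{A^2(W)}\leq \sup_x\|(f-E_{n,m}f)(x,\cdot)\|_{A^2(U)}$ and we may freely use the $A^2(U)$-estimates from Proposition~\ref{prop:finite_dim_projections}. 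One also needs $f - E_{n,m}f$ to genuinely lie in the domain $B(U,\cdot)$ of Lemma~\ref{lem:variationinequality}, i.e. the family $\{(f-E_{n,m}f)(\cdot,z):z\in U\}$ is equicontinuous and each $z\mapsto (f-E_{n,m}f)(x,z)$ is analytic; analyticity is clear since $E_{n,m}f$ is a finite linear combination of the analytic functions $\psi_k$, and equicontinuity follows from $f\in\B(U)\subseteq B(U,\cdot)$ together with the explicit form of $E_{n,m}$. With these points in place the estimate is a direct substitution, so there is no serious obstacle beyond this bookkeeping.
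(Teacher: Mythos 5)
Your proposal is correct and follows essentially the same route as the paper: rewrite the difference as $\L_{\beta}$ applied to $f-E_{n,m}f$, invoke Lemma~\ref{lem:variationinequality}, and control the three resulting terms with Proposition~\ref{prop:finite_dim_projections} parts (2) and (4) together with \textbf{H2} to absorb $\kappa_{k}^{-1}\var_{k+1}\A$ and $\kappa_{k}^{-1}\var_{k+1}\varphi$ into the constant. The bookkeeping points you flag (restriction from $A^{2}(U)$ to $A^{2}(W)$ and membership of $f-E_{n,m}f$ in the lemma's domain) are handled implicitly in the paper in the same way, so there is no substantive difference.
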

\begin{proof}
	Let $z \in U$. Notice that
	\begin{align*}
	\abs{\L_{\beta}f(\cdot , z) - \L_{\beta}E_{n,m}f(\cdot , z)}_{\set{\kappa_{k}}} =\abs{\L_{\beta}(f(\cdot , z) - E_{n,m}f(\cdot , z))}_{\set{\kappa_{k}}}
	\end{align*}
	Thus we must bound
	\begin{equation}\label{eq:variationterm}
	\kappa_{k}^{-1}\var_{k}\L_{\beta}(f(\cdot , z)-E_{n,m}f(\cdot , z))
	\end{equation}
	and by Lemma \ref{lem:variationinequality} we have that for any $k\geq 1$ there is a constant $C>0$ such that equation \eqref{eq:variationterm} is bounded above by the sum of 
	\begin{eqnarray}
	C\sup_{x}\norm{Jf(x , \cdot)-E_{n,m}f( x, \cdot)}_{A^{2}(W)} \kappa_{k}^{-1}\var_{k+1}\A \label{eq:variation_sum_1}\\
	C\sup_{z \in K}\kappa_{k}^{-1}\var_{k+1}(Jf-E_{n,m}f)(\cdot, z)\label{eq:variation_sum_2} \\
	C\sup_{x}\norm{Jf(x , \cdot)-E_{n,m}f( x, \cdot)}_{A^{2}(W)}\kappa_{k}^{-1}\var_{k+1}\varphi.\label{eq:variation_sum_3}
	\end{eqnarray}
	Notice that by Proposition \ref{prop:finite_dim_projections} we have that equation \eqref{eq:variation_sum_1} can be bounded by
	\begin{align*}
	 C\abs{\A}_{\set{\kappa_{k}}}\left(\kappa_{m}\norm{J}_{A^{2}(U)\to A^{2}(W)}\sup_{z \in U}\abs{f(\cdot ,z)}_{\set{\kappa_{k}}}+ \norm{J-J_{n}}_{A^{2}(U) \to A^{2}(W)} \sup_{x}\norm{f(x,\cdot)}_{A^{2}(U)}\right).
	\end{align*}
	Similarly by Proposition \ref{prop:finite_dim_projections} we have that equation \eqref{eq:variation_sum_3} can be bounded by
	\begin{align*}
	C\abs{\varphi}_{\set{\kappa_{k}}}\left(\kappa_{m}\norm{J}_{A^{2}(U)\to A^{2}(W)}\sup_{z \in U}\abs{f(\cdot ,z)}_{\set{\kappa_{k}}}+ \norm{J-J_{n}}_{A^{2}(U) \to A^{2}(W)} \sup_{x}\norm{f(x,\cdot)}_{A^{2}(U)}\right).
	\end{align*}
	Both of these terms are $O\left(\norm{f}_{\B(U)}\kappa_{m} + e^{-cn^{1/d}}\right)$. Notice that by Proposition \ref{prop:finite_dim_projections} (\ref{prop:finite_dim_projections/finalvarbound}) we have that equation \eqref{eq:variation_sum_2} is
	\[ O\left(\norm{f}_{\B(U)}\min\set{\frac{\kappa_{k+1}}{\kappa_{k}}, \frac{\kappa_{m}+e^{-c_{\A}n^{1/d}}}{\kappa_{k}} }\right). \]
	Hence the result.
\end{proof}

\begin{lemma}\label{lem:uniform_bound}
	There exists a constant $C>0$ such that
	\[ \abs{\L_{\beta}f(x,z)- \L_{\beta}E_{n,m}f(x,z)}\leq C \norm{f}_{\B(U)}(\kappa_{m} + e^{-c_{\A}n^{1/d}})  \]
	for all $f \in \B(U)$, $x \in \S_{T}^{+}$ and $z \in U$.
\end{lemma}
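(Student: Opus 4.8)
The plan is to bound the pointwise difference $\abs{\L_{\beta}f(x,z)-\L_{\beta}E_{n,m}f(x,z)}$ by first rewriting it as $\abs{\L_{\beta}(f-E_{n,m}f)(x,z)}$, using linearity of $\L_{\beta}$ exactly as in the opening step of Lemma \ref{lem:lipbound}. Then I would apply Lemma \ref{lem:uniform_norm} with the function $g = f - E_{n,m}f$ in place of $f$: this gives
\[ \abs{\L_{\beta}(f-E_{n,m}f)(x,z)} \leq C\sup_{x\in\S_{T}^{+}}\norm{J(f-E_{n,m}f)(x,\cdot)}_{A^{2}(W)}, \]
where $C$ depends only on $\norm{\varphi}_{\infty}$, the alphabet size, and the uniform bounds on $\norm{\A(x)}$ and $\sigma_{d}(\A(x))$ (and is uniform for $\beta\leq 1$ by the remark following Lemma \ref{lem:uniform_norm}). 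Note that $Jf - E_{n,m}f$ is precisely the quantity controlled in Proposition \ref{prop:finite_dim_projections}, so this reduces the whole lemma to part (2) of that proposition.

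Next I would invoke Proposition \ref{prop:finite_dim_projections}(2), which says
\[ \norm{Jf(x,\cdot)-E_{n,m}f(x,\cdot)}_{A^{2}(W)} \leq \norm{J}_{A^{2}(U)\to A^{2}(W)}\sup_{z\in U}\var_{m}f(\cdot,z) + \norm{J-J_{n}}_{A^{2}(U)\to A^{2}(W)}\sup_{x}\norm{f(x,\cdot)}_{A^{2}(U)}. \]
For $f\in\B(U)$ we have $\sup_{z\in U}\var_{m}f(\cdot,z)\leq \abs{f}_{\set{\kappa_{k}}}\kappa_{m}\leq \norm{f}_{\B(U)}\kappa_{m}$ and $\sup_{x}\norm{f(x,\cdot)}_{A^{2}(U)}\leq C'\norm{f}_{\B(U)}$ (the $A^2(U)$ norm is controlled by the sup norm on the bounded set $U$, which is dominated by $\norm{f}_{\B(U)}$). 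Combining with the bound $\norm{J-J_{n}}_{A^{2}(U)\to A^{2}(W)} = O(e^{-c_{\A}n^{1/d}})$ established in the Proposition preceding Proposition \ref{prop:finite_dim_projections}, the right-hand side is $O\big(\norm{f}_{\B(U)}(\kappa_{m}+e^{-c_{\A}n^{1/d}})\big)$, uniformly in $x$ and $z$. Absorbing all constants into a single $C$ gives the claim.

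The routine part is tracking that every constant is genuinely independent of $x,z,n,m$; the only mild subtlety is making sure Lemma \ref{lem:uniform_norm} is applied to $g=f-E_{n,m}f$, which lies in $\B(U,\cdot)$ (it is analytic in $z$ and its $z$-translates are equicontinuous, being a finite linear combination of the analytic $\psi_k$ together with $f$), so the hypothesis of that lemma is met. There is no real obstacle here — this lemma is the pointwise ($L^\infty$) companion of Lemma \ref{lem:lipbound}, and together they will feed into the estimate on $\norm{\L_{\beta}-\L_{\beta}E_{n,m}}_{\B(U)\to\B(W)}$ in the next step, which is where the approximation-number bound for $\L_{\beta}$ comes from.
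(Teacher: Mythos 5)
Your proposal is correct and follows essentially the same route as the paper: rewrite the difference as $\L_{\beta}(f-E_{n,m}f)$, apply Lemma \ref{lem:uniform_norm} to reduce to $\sup_{x}\norm{Jf(x,\cdot)-E_{n,m}f(x,\cdot)}_{A^{2}(W)}$, and then invoke Proposition \ref{prop:finite_dim_projections}(2) together with the bound on $\norm{J-J_{n}}$ to obtain $O(\norm{f}_{\B(U)}(\kappa_{m}+e^{-c_{\A}n^{1/d}}))$. The paper's proof is exactly this chain of inequalities, so no further comment is needed.
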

\begin{proof}
	Let $x\in \S_{T}^{+}$ and $z \in U$. Then by Lemma \ref{lem:uniform_norm}
	\begin{align*}
	\abs{\L_{\beta}f(x,z)- \L_{\beta}E_{n,m}f(x,z)}&\leq C \sup_{x \in \S_{T}^{+}} \norm{Jf(x, \cdot) - E_{n,m}f(x,\cdot)}_{A^{2}(W)}\\
	& \leq C \norm{J}_{A^{2}(U)\to A^{2}(W)}\sup_{z \in U}\var_{m}f(\cdot ,z) \\ &\;\;\;\;\;\;\;\;+ C \norm{J-J_{n}}_{A^{2}(U) \to A^{2}(W)} \sup_{x}\norm{f(x,\cdot)}_{A^{2}(U)}\\
	&= \norm{f}_{\B(U)}O(\kappa_{m} + e^{-c_{\A}n^{1/d}}).
	\end{align*}
\end{proof}

\begin{proposition}\label{prop:finitedimapproxbound}
	Let $\beta \in \R$. Then there exists a constant $C>0$ such that
	\[ \norm{\L_{\beta} - \L_{\beta}E_{n,m}}_{\B(U), \op} \leq C \left(\max\set{\sup_{k\geq 1}\min\set{\frac{\kappa_{k+1}}{\kappa_{k}}, \frac{\kappa_{m}+e^{-c_{\A}n^{1/d}}}{\kappa_{k}} } , \kappa_{m}+e^{-c_{\A}n^{1/d}} }\right). \]
\end{proposition}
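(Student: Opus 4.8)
The plan is to combine the pointwise bounds just obtained (Lemma \ref{lem:uniform_bound} for the sup-norm contribution and Lemma \ref{lem:lipbound} for the Lipschitz-seminorm contribution) into a bound on the operator norm of $\L_{\beta} - \L_{\beta}E_{n,m}$. Recall that for $f \in \B(U)$ we have $\norm{(\L_{\beta} - \L_{\beta}E_{n,m})f}_{\B(U)} = \sup_{z \in U}\norm{(\L_{\beta}f - \L_{\beta}E_{n,m}f)(\cdot, z)}_{\lip(\S_{T}^{+}, \set{\kappa_{n}})}$, and the $\lip$ norm splits as $\norm{\cdot}_{\infty} + \abs{\cdot}_{\set{\kappa_{k}}}$. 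So the estimate is really just the sum of the two lemmas, uniformly in $z \in U$.

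First I would fix $f \in \B(U)$ with $\norm{f}_{\B(U)} \leq 1$ and $z \in U$. For the uniform part, Lemma \ref{lem:uniform_bound} gives $\abs{(\L_{\beta}f - \L_{\beta}E_{n,m}f)(x,z)} \leq C\,(\kappa_{m} + e^{-c_{\A}n^{1/d}})$ for all $x$, hence $\norm{(\L_{\beta}f - \L_{\beta}E_{n,m}f)(\cdot, z)}_{\infty} \leq C(\kappa_{m} + e^{-c_{\A}n^{1/d}})$. For the Lipschitz part, Lemma \ref{lem:lipbound} gives $\abs{(\L_{\beta}f - \L_{\beta}E_{n,m}f)(\cdot, z)}_{\set{\kappa_{k}}} \leq C\bigl(\max\set{\sup_{k\geq 1}\min\set{\kappa_{k+1}/\kappa_{k}, (\kappa_{m}+e^{-c_{\A}n^{1/d}})/\kappa_{k}}, \kappa_{m}+e^{-c_{\A}n^{1/d}}}\bigr)$. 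Adding these two, and noting that $\kappa_{m} + e^{-c_{\A}n^{1/d}}$ is itself dominated by the displayed $\max$ term (it is the second argument of the outer $\max$), yields the claimed bound with a possibly larger constant $C$. Taking the supremum over $z \in U$ and then over $f$ in the unit ball of $\B(U)$ gives the operator-norm estimate.

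There is essentially no obstacle here: this proposition is purely bookkeeping, repackaging Lemmas \ref{lem:uniform_bound} and \ref{lem:lipbound} into the single quantity $\norm{\L_{\beta} - \L_{\beta}E_{n,m}}_{\B(U), \op}$. The only point requiring a word of care is that Lemma \ref{lem:lipbound} is stated for $f \in \B(U)$ without the normalization $\norm{f}_{\B(U)} \leq 1$ (it carries the factor $\norm{f}_{\B(U)}$ explicitly) whereas the $A^{2}$-type bounds feeding into it implicitly compare $\sup_x \norm{f(x,\cdot)}_{A^{2}(U)}$ and $\abs{f}_{\set{\kappa_k}}$ to $\norm{f}_{\B(U)}$; one should simply observe that all the intermediate quantities $\sup_x\norm{f(x,\cdot)}_{A^{2}(U)}$, $\sup_{z\in U}\abs{f(\cdot,z)}_{\set{\kappa_k}}$, and $\sup_{z\in U}\norm{f(\cdot,z)}_\infty$ are all $\leq \norm{f}_{\B(U)}$, so every term scales linearly in $\norm{f}_{\B(U)}$ and dividing through is legitimate.

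\begin{proof}
	Let $f \in \B(U)$ with $\norm{f}_{\B(U)} \leq 1$ and fix $z \in U$. By definition of the norm on $\B(U)$,
	\[ \norm{(\L_{\beta} - \L_{\beta}E_{n,m})f}_{\B(U)} = \sup_{z \in U}\left(\norm{(\L_{\beta}f - \L_{\beta}E_{n,m}f)(\cdot, z)}_{\infty} + \abs{(\L_{\beta}f - \L_{\beta}E_{n,m}f)(\cdot, z)}_{\set{\kappa_{k}}}\right). \]
	By Lemma \ref{lem:uniform_bound} the first term is $O(\kappa_{m} + e^{-c_{\A}n^{1/d}})$ uniformly in $z \in U$, and by Lemma \ref{lem:lipbound} the second term is
	\[ O\left(\max\set{\sup_{k\geq 1}\min\set{\frac{\kappa_{k+1}}{\kappa_{k}}, \frac{\kappa_{m}+e^{-c_{\A}n^{1/d}}}{\kappa_{k}} } , \kappa_{m}+e^{-c_{\A}n^{1/d}} }\right) \]
	uniformly in $z \in U$. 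Since $\kappa_{m} + e^{-c_{\A}n^{1/d}}$ is one of the two arguments of the outer maximum, the sum of the two terms is bounded by a constant multiple of the displayed maximum. Taking the supremum over the unit ball of $\B(U)$ gives the claim.
\end{proof}
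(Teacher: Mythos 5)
Your proof is correct and matches the paper's argument, which also simply combines Lemma \ref{lem:uniform_bound} (for the uniform part of the $\lip$ norm) with Lemma \ref{lem:lipbound} (for the $\abs{\cdot}_{\set{\kappa_{k}}}$ seminorm) and absorbs the $\kappa_{m}+e^{-c_{\A}n^{1/d}}$ term into the outer maximum. The paper states this in one line; your write-up just spells out the same bookkeeping, including the correct observation that all intermediate quantities scale linearly in $\norm{f}_{\B(U)}$.
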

\begin{proof}
	This follows immediately from Lemma \ref{lem:lipbound} and Lemma \ref{lem:uniform_bound}.
\end{proof}

\begin{lemma}\label{lem:calc}
	For all $p\geq 1$ and $k\geq 1$ we have that
	\[ \frac{(k+1)^{p}-k^{p}}{k^{p-1}} \geq p. \]
\end{lemma}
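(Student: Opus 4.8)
The plan is to apply the mean value theorem to the function $f(x) = x^{p}$ on the interval $[k, k+1]$. Since $f$ is differentiable on $(0,\infty)$ with $f'(x) = p x^{p-1}$, there exists $\xi \in (k, k+1)$ such that
\[ (k+1)^{p} - k^{p} = f(k+1) - f(k) = f'(\xi) = p\,\xi^{p-1}. \]
Because $p \geq 1$ we have $p - 1 \geq 0$, so the map $x \mapsto x^{p-1}$ is nondecreasing on $(0,\infty)$; in particular $\xi^{p-1} \geq k^{p-1}$ since $\xi > k$. Hence $(k+1)^{p} - k^{p} \geq p\, k^{p-1}$, and dividing through by $k^{p-1} > 0$ (which is positive since $k \geq 1$) yields the claim.

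Alternatively, I would note that for $p \geq 1$ the function $f(x) = x^{p}$ is convex on $[0,\infty)$, so its graph lies above the tangent line at $x = k$, which gives $(k+1)^{p} \geq k^{p} + p\, k^{p-1}$, the same inequality after rearranging and dividing by $k^{p-1}$. There is essentially no obstacle here; the only point worth recording is that the hypothesis $p \geq 1$ is used exactly to ensure that $x \mapsto x^{p-1}$ is nondecreasing (equivalently that $f$ is convex), which would fail for $0 < p < 1$, where the inequality reverses.
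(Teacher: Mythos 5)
Your proof is correct, but it takes a different route from the paper. The paper argues that the quantity $\frac{(k+1)^{p}-k^{p}}{k^{p-1}}$ is decreasing in $k$ and is therefore bounded below by its limit as $k\to\infty$, which equals $p$; the monotonicity claim is asserted without justification there. You instead apply the mean value theorem (equivalently, convexity of $x\mapsto x^{p}$ and the tangent-line bound at $x=k$) to get $(k+1)^{p}-k^{p}=p\,\xi^{p-1}\geq p\,k^{p-1}$ directly, using $p\geq 1$ only to ensure $x\mapsto x^{p-1}$ is nondecreasing. Your argument is self-contained and avoids the unproved monotonicity step, while the paper's approach, once the monotonicity is granted, gives the slightly stronger qualitative picture that the ratio decreases to $p$; for the purposes of the lemma the two yield the same bound, and your version is arguably the cleaner one to cite.
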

\begin{proof}
	Notice that 
	\[ \frac{(k+1)^{p}-k^{p}}{k^{p-1}}  \]
	is decreasing and therefore bounded below by
	\[ \lim_{k \to \infty}\frac{(k+1)^{p}-k^{p}}{k^{p-1}} = p. \]
\end{proof}

In order to ensure that $\norm{\L_{\beta} - \L_{\beta}E_{n,m}}_{\B(U), \op}$ will converge to $0$ as $n,m \to \infty$ we need to take some assumptions on $\set{\kappa_{n}}$. If $\kappa_{k}=e^{-ck^{p}}$ for $p\geq 1$ then take $(n,m)=(m^{(p+1)d},m)$. Then 
\[ \kappa_{m}+e^{-c_{\A}n^{1/d}}  = e^{-cm^{p}} + e^{-c_{\A}m^{\floor{p+1}}} \leq C e^{-cm^{p-1}}. \]
for some $C>0$. If $k<m$ then 
\[ \min\set{\frac{\kappa_{k+1}}{\kappa_{k}}, \frac{\kappa_{m}+e^{-c_{\A}m^{\floor{p+1}}}}{\kappa_{k}}} \leq \frac{\kappa_{m}+e^{-c_{\A}m^{p+1}}}{\kappa_{k}}\leq  C e^{-c(m^{p}-k^{p})}. \]
Notice that $m^{p}-k^{p} \geq m^{p}-(m-1)^{p}$ by Lemma \ref{lem:calc} we have that
\[  e^{-c(m^{p}-k^{p})} \leq e^{-pcm^{p-1}}.\]
Thus
\[ \min\set{\frac{\kappa_{k+1}}{\kappa_{k}}, \frac{\kappa_{m}+e^{-c_{\A}m^{\floor{p+1}}}}{\kappa_{k}}} \leq Ce^{-pcm^{p-1}}\] 
for some $C>0$. If $k \geq m-1$ then 
\[ \min\set{\frac{\kappa_{k+1}}{\kappa_{k}}, \frac{\kappa_{m}+e^{-c_{\A}m^{\floor{p+1}}}}{\kappa_{k}}}\leq \frac{\kappa_{k+1}}{\kappa_{k}}= e^{-c((k+1)^{p}-k^{p})} \leq e^{-pck^{p-1}} \leq e^{-pcm^{p-1}}. \]
Therefore
\[ \min\set{\frac{\kappa_{k+1}}{\kappa_{k}}, \frac{\kappa_{m}+e^{-c_{\A}m^{\floor{p+1}}}}{\kappa_{k}}}\leq C e^{-pcm^{p-1}}. \]
for some $C>0$.

\begin{theorem}\label{thm:approxnumbers}
	Let $p \geq 1$ and $q \in \Z^{+}$. If $\kappa_{n}=O(e^{-cn^{p}})$ then for all $\e>0$ there exists a constant $C>0$ such that 
	\[ a_{n}(\L_{\beta}^{q}) \leq C \left(\exp \left[-pqc\left( \log n^{k} \right)^{p-1}\right] \right) \text{ where }k=\frac{1}{h_{\tope}+\e}. \]
\end{theorem}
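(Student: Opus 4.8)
The plan is to combine Proposition \ref{prop:finitedimapproxbound} with the rank estimate from Proposition \ref{prop:finite_dim_projections}(1), together with submultiplicativity of approximation numbers from Proposition \ref{prop:approxnum}(1). First I would record the calculation carried out just before the theorem statement: for the choice $\kappa_k = e^{-ck^p}$ (we may absorb the $O(\cdot)$ into constants) and the coupled parameters $(n,m) = (m^{\lfloor p+1\rfloor d}, m)$, Proposition \ref{prop:finitedimapproxbound} gives
\[ \norm{\L_\beta - \L_\beta E_{m^{\lfloor p+1 \rfloor d}, m}}_{\B(U),\op} \leq C e^{-pc m^{p-1}}. \]
Since $E_{n,m}$ has rank at most $n\cdot\abs{L_m}$ and $\abs{L_m} \leq (\text{number of symbols})^m$, the operator $\L_\beta E_{n,m}$ has rank at most $m^{\lfloor p+1\rfloor d}\abs{L_m}$; because the shift has topological entropy $h_{\tope}$, for any $\e>0$ we have $\abs{L_m} \leq C_\e e^{(h_{\tope}+\e)m}$, so the rank is at most $C_\e' e^{(h_{\tope}+2\e)m}$ for $m$ large (the polynomial factor $m^{\lfloor p+1\rfloor d}$ is absorbed into $e^{\e m}$). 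Hence $a_N(\L_\beta) \leq C e^{-pcm^{p-1}}$ whenever $N \geq C_\e' e^{(h_{\tope}+2\e)m}$, i.e. whenever $m \leq \frac{1}{h_{\tope}+2\e}\log(N/C_\e') = k\log N - O(1)$ after relabeling $\e$. Substituting this largest admissible $m$ gives $a_N(\L_\beta) \leq C\exp[-pc(k\log N)^{p-1}]$, after again absorbing lower-order terms (here one uses that $(k\log N - O(1))^{p-1} = (k\log N)^{p-1} - O((\log N)^{p-2})$, and the $O((\log N)^{p-2})$ correction multiplied by $pc$ is absorbed into the constant $C$ by slightly enlarging $\e$).

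To get the bound for $\L_\beta^q$ rather than $\L_\beta$, I would use Proposition \ref{prop:approxnum}(1): telescoping $\L_\beta^q - (\L_\beta E_{n,m})^q$ as a sum of $q$ terms each of the form $\L_\beta^{j}(\L_\beta - \L_\beta E_{n,m})(\L_\beta E_{n,m})^{q-1-j}$, together with part (2) and the boundedness of $\L_\beta$, shows $\L_\beta^q$ is approximable to within $C q \norm{\L_\beta}^{q-1} e^{-pcm^{p-1}}$ by an operator of rank at most $q$ times the rank of $\L_\beta E_{n,m}$ — whose logarithm is still $(h_{\tope}+2\e)m + O(\log m)$. Running the same inversion of $N \leftrightarrow m$ but now keeping track of the factor $q$ in the exponent (it multiplies $p c m^{p-1}$ after noticing $(\L_\beta E_{n,m})^q$ itself differs from a rank-$O(e^{(h_{\tope}+2\e)qm})$ operator — actually more carefully: the $q$-fold composition of a near-identity-perturbation argument yields an error bounded by $(\norm{\L_\beta - \L_\beta E_{n,m}} + \norm{\L_\beta E_{n,m} - (\text{low rank})})^{\text{appropriate}}$; the cleanest route is to observe directly that $a_{qN}(\L_\beta^q) \le q \cdot a_N(\L_\beta)\norm{\L_\beta}^{q-1}$ via the telescoping sum and Proposition \ref{prop:approxnum}) produces the stated $-pqc(\log n^k)^{p-1}$.

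The main obstacle I anticipate is bookkeeping the two competing scales correctly: the error in Proposition \ref{prop:finitedimapproxbound} decays like $e^{-pcm^{p-1}}$ while the rank grows like $e^{(h_{\tope}+\e)m}$, and it is the ratio $\log(\text{rank})/m^{p-1} \to 0$ (for $p>1$; for $p=1$ it is a genuine constant comparison, which is why the $p=1$ case only gives compactness, not a rate) that forces the precise shape $\exp[-pc(k\log n)^{p-1}]$ with $k = 1/(h_{\tope}+\e)$. I need to be careful that the polynomial-in-$m$ correction $m^{\lfloor p+1\rfloor d}$ in the rank, and the additive $O(1)$ in $m \leq k\log n - O(1)$, are genuinely negligible after composition with the convex function $t \mapsto t^{p-1}$; this is where the freedom to increase $\e$ slightly (shrinking $k$) is used to swallow all such lower-order terms into the constant $C$. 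The $q$-dependence is the easy part once the telescoping inequality $a_{qN}(\L_\beta^q)\le q\norm{\L_\beta}^{q-1}a_N(\L_\beta)$ is in hand, since passing from $n$ to $qn$ in the argument of $(\log n^k)^{p-1}$ changes it by a negligible amount while the exponent picks up exactly the factor $q$.
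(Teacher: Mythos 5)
Your $q=1$ argument is essentially the paper's: couple $(n,m)=(m^{\floor{p+1}d},m)$, use Proposition \ref{prop:finitedimapproxbound} to get an error $O(e^{-pcm^{p-1}})$, count the rank as $n\abs{L_{m}}\leq C_{\e}e^{(h_{\tope}+\e)m}$ up to polynomial factors, invert $m\approx \frac{\log N}{h_{\tope}+\e}$, and absorb the polynomial and $O(1)$ corrections by slightly enlarging $\e$. That part is fine.

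The genuine gap is in how you produce the factor $q$ inside the exponent. Your proposed mechanism --- telescoping $\L_{\beta}^{q}-(\L_{\beta}E_{n,m})^{q}=\sum_{j=0}^{q-1}\L_{\beta}^{j}(\L_{\beta}-\L_{\beta}E_{n,m})(\L_{\beta}E_{n,m})^{q-1-j}$, leading to $a_{qN}(\L_{\beta}^{q})\leq q\norm{\L_{\beta}}^{q-1}a_{N}(\L_{\beta})$ --- contains the small operator $\L_{\beta}-\L_{\beta}E_{n,m}$ only \emph{once} in each summand, so the error it yields is linear in $\norm{\L_{\beta}-\L_{\beta}E_{n,m}}$, i.e.\ $O\big(q\norm{\L_{\beta}}^{q-1}e^{-pcm^{p-1}}\big)$. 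After the $N\leftrightarrow m$ inversion this gives $a_{n}(\L_{\beta}^{q})=O\left(\exp\left[-pc\left(\log n^{k}\right)^{p-1}\right]\right)$ with no $q$: the factor $q$ appears only as a prefactor outside the exponential, and your closing claim that ``the exponent picks up exactly the factor $q$'' is not justified by this inequality. The paper gets the $q$ differently: it takes as the finite-rank approximant $E_{n,m}^{(q)}:=\L_{\beta}^{q}-(\L_{\beta}-\L_{\beta}E_{n,m})^{q}$, which by the telescoping identity $E_{n,m}^{(q)}=\sum_{k=0}^{q-1}\L_{\beta}^{q-k}E_{n,m}(\L_{\beta}-\L_{\beta}E_{n,m})^{k}$ has rank at most $qn\abs{L_{m}}$ (only linear growth in $q$, harmless after taking logarithms), while the remainder is \emph{exactly} $(\L_{\beta}-\L_{\beta}E_{n,m})^{q}$, whose norm is bounded by $\norm{\L_{\beta}-\L_{\beta}E_{n,m}}^{q}\leq C^{q}e^{-pqcm^{p-1}}$; it is this $q$-th power of the error, not a sum of $q$ first-order terms, that produces the stated $\exp\left[-pqc\left(\log n^{k}\right)^{p-1}\right]$. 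To repair your proof you should replace your approximant $(\L_{\beta}E_{n,m})^{q}$ by $E_{n,m}^{(q)}$ (or, equivalently, write $\L_{\beta}=F+R$ with $F$ of low rank and keep $R^{q}$ as the error, discarding into the finite-rank part every term of the expansion containing at least one $F$).
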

\begin{proof}
	We follow the method in \cite{MR4147353}. For $q \in \Z^{+}$ define
	\[ E_{n,m}^{(q)} = \L_{\beta}^{q}-(\L_{\beta}- \L_{\beta}E_{n,m})^{q}.  \]
	Notice that expanding $(\L_{\beta}-\L_{\beta}E_{n,m})^{q}$ we obtain 
	\[ E_{n,m}^{(q)}=\sum_{k=0}^{q-1}\L_{\beta}^{q-k}E_{n,m}(\L_{\beta}-\L_{\beta}E_{n,m})^{k}.  \]
	Thus $\ran(E_{n,m}^{(q)})\leq qn \cdot \abs{L_{m}}$. Let $n \geq 1$ and $\e>0$. 
	Notice that
	\begin{align*}
	qm^{\floor{p+1}d}\abs{L_{m}} &=q \frac{m^{\floor{p+1}d}\abs{L_{m}}}{e^{m(h_{\tope}+\e)}}e^{m(h_{\tope}+\e)}
	\end{align*}
	as 
	\[ q\frac{m^{\floor{p+1}d}\abs{L_{m}}}{e^{m(h_{\tope}+\e)}}\xrightarrow{m \to \infty}0 \]
	we have that for sufficiently large $m$
	\begin{align*}
	qm^{\floor{p+1}d}\abs{L_{m}} < e^{m(h_{\tope}+\e)}.
	\end{align*}
	For the remainder of the the proof we will set 
	\[ m = \floor{\frac{\log n}{h_{\tope}+\e}}. \]
	Then for sufficiently large $n$ we have that
	\begin{gather*}
	m  \leq \frac{\log n}{h_{\tope}+\e} \\
	m(h_{\tope}+\e)  \leq  \log n \\
	e^{m(h_{\tope}+\e)} \leq n \\
	qm^{\floor{p+1}d}\abs{L_{m}}  < n.
	\end{gather*}
	Thus for sufficiently large $n$
	\[ \ran(E_{m^{\floor{p+1}},m}^{(q)}) \leq q m^{\floor{p+1}d}\abs{L_{m}} < n.  \]
	Therefore for sufficiently large $n$
	\begin{align*}
	a_{n}(\L_{\beta})&\leq \norm{\L_{\beta} - E_{m^{\floor{p+1}},m}^{(q)}}_{\B, \op}\\
	& = \norm{(\L_{\beta}-\L_{\beta}E_{n,m})^{q}}_{\B. \op}\\
	&\leq \norm{\L_{\beta}-\L_{\beta}E_{n,m}}_{\B. \op}^{q}\\
	&\leq Ce^{-pqc m^{p-1} }\\
	&=C\exp\left[-pqc \left(\floor{\frac{\log n}{h_{\tope}+\e}}\right)^{p-1} \right]\\
	&\leq C' \exp\left[-pqc \left(\frac{\log n}{h_{\tope}+\e} \right)^{p-1}\right].
	\end{align*}
	Hence the result.
\end{proof}

Now using Weyl's inequality we can prove the following from which Theorem \ref{thm:eigenvalues} will follow.

\begin{theorem}\label{thm:eigenvaluestech}
	Let $\S_{T}^{+}$ be the shift of finite type defined by the matrix $T$, $\varphi:\S_{T}^{+} \to \R$, and $\A:\S_{T}^{+} \to GL_{d}(\R)$ be such that $\A(\S_{T}^{+})$ is dominated and
	\[ \var_{n}\varphi, \var_{n}\A = O(e^{-cn^{p}}). \]
	There is a Banach space $\B$ dense in a quotient of $C(\S_{T}^{+}\times \RP^{d-1})$ such that for any $\beta$ $\L_{\beta}$ acts on $\B$ and the following are true.
	\begin{enumerate}
		\item
		If $p>1$ then $\L_{\beta}$ is compact.
		
		\item 
		If $p=2$ then for any $\e>0$
		\[ \abs{\lambda_{n}(\L_{\beta})} = O(n^{-k}) \]
		where $\set{\lambda_{n}(\L_{\beta})}$ is the sequence of eigenvalues of $\L_{\beta}$ listed in descending order of modulus with algebraic multiplicity and
		\[ k=\frac{2c}{h_{\tope}+\e}-\frac{1}{2}. \]
	\end{enumerate}
\end{theorem}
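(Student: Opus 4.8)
The plan is to assemble ingredients already in place. Since $\A(\S_{T}^{+})$ is dominated it is multipositive, so after the standard passage to a higher block representation (which changes neither $h_{\tope}$ nor, up to a harmless shift of index, the rates $\var_{n}\varphi,\var_{n}\A$) we may assume $\A$ satisfies \textbf{H1}. Take $\kappa_{n}=e^{-cn^{p}}$, which decreases to $0$; then $\varphi,\A$ satisfy \textbf{H2}$(\set{\kappa_{n}})$ and we set $\B=\B(U,\kappa_{n})$ with $U$ as in Theorem \ref{thm:conesmorris}. That $\L_{\beta}$ acts boundedly on $\B$, and that $\B$ is dense in the quotient $C(\S_{T}^{+}\times\bigcup_{i}\K_{i})$ of $C(\S_{T}^{+}\times\RP^{d-1})$, was established in Section \ref{sec:Bspace}. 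It remains to prove (1) and (2).

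For (1), fix the parameter choice $(n,m)=(m^{\floor{p+1}d},m)$ made in the paragraph following Lemma \ref{lem:calc}. Then $\L_{\beta}E_{m^{\floor{p+1}d},m}$ has rank at most $m^{\floor{p+1}d}\abs{L_{m}}<\infty$ by Proposition \ref{prop:finite_dim_projections}, and combining the estimates of that paragraph with Proposition \ref{prop:finitedimapproxbound} gives a constant $C>0$ with
\[ \norm{\L_{\beta}-\L_{\beta}E_{m^{\floor{p+1}d},m}}_{\B(U),\op}\leq Ce^{-pcm^{p-1}}. \]
When $p>1$ the exponent $m^{p-1}$ tends to infinity, so $\L_{\beta}$ is an operator-norm limit of finite-rank operators and hence compact. (When $p=1$ the same bound is only $O(1)$, which is precisely why compactness is claimed only for $p>1$.)

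For (2), set $p=2$; then $\L_{\beta}$ is compact by part (1), so Weyl's inequality (Theorem \ref{thm:weyl}) applies. Fix $\e>0$. For $p=2$ one has $(\log n^{k})^{p-1}=k\log n$ with $k=1/(h_{\tope}+\e)$, so Theorem \ref{thm:approxnumbers} with $q=1$ furnishes a constant with $a_{n}(\L_{\beta})\leq Cn^{-\alpha}$ for all sufficiently large $n$, where $\alpha:=2c/(h_{\tope}+\e)$. Using the monotonicity of $\set{\abs{\lambda_{j}(\L_{\beta})}}$, then Weyl's inequality, and absorbing the finitely many indices where the polynomial bound on $a_{j}$ is not yet valid into the constant, we obtain
\[ \abs{\lambda_{n}(\L_{\beta})}^{n}\leq\prod_{j=1}^{n}\abs{\lambda_{j}(\L_{\beta})}\leq n^{n/2}\prod_{j=1}^{n}a_{j}(\L_{\beta})\leq n^{n/2}C_{1}^{n}(n!)^{-\alpha}. \]
Invoking $n!\geq(n/e)^{n}$ and taking $n$-th roots yields $\abs{\lambda_{n}(\L_{\beta})}\leq C_{1}e^{\alpha}n^{1/2-\alpha}$, i.e.\ $\abs{\lambda_{n}(\L_{\beta})}=O(n^{-k})$ with $k=\alpha-\frac{1}{2}=\frac{2c}{h_{\tope}+\e}-\frac{1}{2}$, as claimed.

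There is no serious obstacle: everything reduces to earlier results together with Weyl's inequality. The only point that takes a little care is the bookkeeping in (2) — converting Theorem \ref{thm:approxnumbers} into the clean polynomial form $a_{n}(\L_{\beta})=O(n^{-\alpha})$, handling the initial range of $n$ where that bound is not yet in force, and tracking $\e$. I would also note that running the same argument with $\L_{\beta}^{q}$ in place of $\L_{\beta}$ (using the full strength of Theorem \ref{thm:approxnumbers} together with $\abs{\lambda_{j}(\L_{\beta}^{q})}=\abs{\lambda_{j}(\L_{\beta})}^{q}$) replaces the exponent $\frac{1}{2}$ by $\frac{1}{2q}$, so the $\frac{1}{2}$ could be absorbed into the arbitrary $\e$; since the $q=1$ form already suffices for the applications, I would state it that way.
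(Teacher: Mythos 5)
Your proposal is correct and follows essentially the same route as the paper: compactness for $p>1$ via the finite-rank approximations (equivalently $a_{n}(\L_{\beta})\to 0$), and for $p=2$ the combination of Theorem \ref{thm:approxnumbers} with $q=1$, monotonicity of the eigenvalue moduli, Weyl's inequality, and a Stirling-type bound on $n!$. Your closing remark that taking $q>1$ would replace the exponent $\frac{1}{2}$ by $\frac{1}{2q}$ is a correct observation beyond what the paper records, but it does not change the argument.
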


\begin{proof}
		First notice that if $p>1$ then $a_{n}(\L_{\beta})$ converges to $0$ and hence $\L_{\beta}$ is compact. If $p=2$ then by Theorem \ref{thm:weyl} we have that
		\begin{align*}
		\abs{\lambda_{n}(\L_{\beta})}&\leq \left(\prod_{k=1}^{n}\abs{\lambda_{k}}\right)^{1/n}\\
		&\leq n^{1/2}\left(\prod_{k=1}^{n}a_{k}(\L_{\beta})\right)^{1/n}\\
		&\leq n^{1/2}C' n!^{\left(\frac{-2c}{h_{\tope}+\e}\right)\frac{1}{n}}\\
		&\leq C'' n^{\frac{-2c}{h_{\tope}+\e}+\frac{1}{2}}
		\end{align*}
		for some $C''>0$.
\end{proof}

Notice that Theorem \ref{thm:eigenvalues} follows by taking $\e = h_{\tope}$.

\section{The trace formula}\label{sec:traceformula}

Throughout this section we assume that $\A$ satisfies \textbf{H1} and that $\varphi, \A$ satisfy \textbf{H2}$(\set{\kappa_{n}})$ where
\[  \kappa_{n}=e^{-cn^{2}} \text{ and }c > h_{\tope}. \]
We begin by recalling some basic facts about traces and determinants of operators acting on Banach spaces. Define
\[ \mathfrak{L}_{1}^{(a)} = \set{\T \in \mathfrak{L}(\B(U)) : \sum_{n=1}^{\infty}a_{n}(\T)<\infty} \]
$\mathfrak{L}_{1}^{(a)}$ is a two sided ideal in $\L(\B(U))$. This definition sheds light on why we will require that $c>h_{\tope}$ notice that taking $\e = h_{\tope}$ (for convenience) in Theorem \ref{thm:approxnumbers} we have that 
\[ a_{n}(\L_{\beta}) = O(n^{-k}) \]
where
\[ k= \frac{c}{h_{\tope}} > 1. \]
Hence we have that $\L_{\beta} \in \mathfrak{L}^{(a)}_{1}$. We recall some facts about $\mathfrak{L}^{(a)}_{1}$, the following can be found in \cite{MR890520}.

\begin{proposition}\label{prop:tracecontinuity}
	Assume that $\T \in \mathfrak{L}^{(a)}_{1}$ then following are true:
	\begin{enumerate}
		\item 
		Let $\set{\lambda_{n}(\T)}_{n=1}^{\infty}$ be the eigenvalues of $\T$ listed with multiplicity then
		\[ \sum_{n=1}^{\infty}\abs{\lambda_{n}(\T)} < \infty. \]
		
		\item
		There is a unique continuous trace, $\tr$, on $\mathfrak{L}_{1}^{(a)}$ which satisfies the formula
		\[ \tr(\T)=\sum_{n=1}^{\infty}\lambda_{n}(\T). \]
		
		\item
		$\tr$ is continuous in the following sense. If 
		\[ \lim_{m\to \infty}\sum_{k=1}^{\infty}a_{k}(\T_{m}-\T) = 0 \]
		then
		\[ \lim_{m\to \infty}\tr(\T_{m})=\tr(\T). \]
	\end{enumerate}
\end{proposition}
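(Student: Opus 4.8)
The plan is to deduce all three assertions from the general theory of the operator ideal $\mathfrak{L}_1^{(a)}$ of operators with absolutely summable approximation numbers, following Pietsch \cite{MR890520}; the only operator we ultimately apply this to is $\T=\L_\beta$, which lies in $\mathfrak{L}_1^{(a)}$ because $a_n(\L_\beta)=O(n^{-c/h_{\tope}})$ with $c>h_{\tope}$.

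For (1) I would argue as follows. Approximation numbers are the largest among all $s$-number sequences, so $\sum_n a_n(\T)<\infty$ forces the Weyl numbers of $\T$ to be summable as well. One then feeds this into a Weyl-type eigenvalue inequality: Theorem \ref{thm:weyl} combined with a summation argument on the scale of $\ell_1$ gives a bound of the form $\sum_n\abs{\lambda_n(\T)}\leq C\sum_n a_n(\T)$ for some constant $C$, hence $\sum_n\abs{\lambda_n(\T)}<\infty$. I expect this to be the main obstacle, and here is why: the crude form of Theorem \ref{thm:weyl} alone only yields $\abs{\lambda_n(\T)}=O(n^{-1/2})$ for $\T\in\mathfrak{L}_1^{(a)}$ --- apply the arithmetic--geometric mean inequality to $a_1(\T),\dots,a_n(\T)$ inside $\bigl(\prod_{k\le n}\abs{\lambda_k(\T)}\bigr)^{1/n}\le n^{1/2}\bigl(\prod_{k\le n}a_k(\T)\bigr)^{1/n}$ --- which is not summable. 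The summable bound requires the sharp Weyl-type inequality in Lorentz scales, which genuinely uses the structure of the ideal; in the write-up I would simply quote it from \cite{MR890520}.

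For (2) and (3) I would take $\tau(\T):=\sum_n\lambda_n(\T)$ as the candidate trace, which is well defined by (1). Three things need checking. First, $\tau$ coincides with the usual matrix trace on finite-rank operators --- a Jordan-form computation. Second, $\tau$ is continuous for the ideal quasi-norm $\T\mapsto\sum_n a_n(\T)$, which is immediate from the estimate $\abs{\tau(\T)}\le\sum_n\abs{\lambda_n(\T)}\le C\sum_n a_n(\T)$ obtained in (1); this continuity is exactly assertion (3), since $\sum_k a_k(\T_m-\T)\to0$ then forces $\tau(\T_m)\to\tau(\T)$. Third, and most delicate, one must show $\tau$ is additive on all of $\mathfrak{L}_1^{(a)}$ and that it is the \emph{only} continuous trace there; since eigenvalue sequences are not themselves additive, and finite-rank operators need not be dense in $\mathfrak{L}_1^{(a)}$ when the underlying space lacks the approximation property, both points require real work and are supplied by \cite{MR890520} (via perturbation theory for the regularised Fredholm determinant $z\mapsto\det(1-z\T)$ and the structure theory of quasi-Banach operator ideals). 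Since none of this uses anything about $\B(U)$ beyond $\L_\beta\in\mathfrak{L}_1^{(a)}$, the most economical presentation is to record the three consequences and cite the reference, which is what the proposition does.
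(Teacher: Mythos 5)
Your proposal is correct and takes essentially the same route as the paper, which offers no proof of this proposition at all but simply records these facts as quotations from Pietsch \cite{MR890520}; you likewise defer the substantive points (that $\mathfrak{L}_{1}^{(a)}$ has eigenvalue type $\ell_{1}$ and admits a unique continuous spectral trace) to that reference. Your added observation that the crude form of Theorem \ref{thm:weyl} combined with the arithmetic--geometric mean inequality only gives $\abs{\lambda_{n}(\T)}=O(n^{-1/2})$, so that the sharper Weyl-type estimate from the ideal theory is genuinely needed for part (1), is accurate and correctly pinpoints why the citation cannot be replaced by the paper's stated inequality alone.
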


The following lemma is well known in the context of dynamical systems it goes back to Ruelle \cite{MR420720}. One can find a proof in the generality that we require in \cite{morris2018fast}.

\begin{lemma}
	Let $\varphi_{m}(x) = \sum_{\abs{I}=m}\varphi(x_{I})\chi_{[I]}(x)$. and $\A_{m}(x)=\sum_{\abs{I}=m}\A(x_{I})\chi_{[I]}(x)$. Then 
	\[ \tr (\L_{\varphi_{m}, \A_{m} ,\beta}^{n}) = \sum_{x \in \per_{n}(\sigma)} e^{S_{n}\varphi_{m}(x)}\frac{\lambda_{1}(\A^{(n)}_{m}(x))^{d-1}\rho(\A^{(n)}_{m}(x))^{\beta}}{p_{\A^{(n)}_{m}(x)}'(\lambda_{1}(\A^{(n)}_{m}(x)))} \]
	where $p_{A}'(\cdot)$ is the derivative of the characteristic polynomial of $A$.
\end{lemma}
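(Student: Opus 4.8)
The plan is to reduce to the locally constant case in its most transparent form and then run a branchwise holomorphic Lefschetz computation. First, passing to a higher block presentation (cf.\ the Remark after Proposition~\ref{prop:locallyconstantcones}) we may assume $m=1$, so that $\varphi_{m}$ and $\A_{m}$ depend only on the coordinate $x_{0}$; write $\varphi_{j},\A_{j}$ for their values on $[j]$. This changes neither $h_{\tope}$, nor the periodic orbit sum, nor (up to conjugacy) $\L_{\varphi_{m},\A_{m},\beta}$ and its trace, and since $\A_{m}(\S_{T}^{+})$ is a finite multipositive set and $\var_{n}\varphi_{m},\var_{n}\A_{m}$ vanish for large $n$, $\L_{\varphi_{m},\A_{m},\beta}$ is bounded on $\B(U,\set{\kappa_{n}})$ and lies in $\mathfrak{L}_{1}^{(a)}$. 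For an admissible word $I=i_{0}\cdots i_{n-1}$ put $S_{n}\varphi(I)=\varphi_{i_{0}}+\cdots+\varphi_{i_{n-1}}$ and let $\A_{I}$ be the value on $[I]$ of the locally constant cocycle $\A^{(n)}_{m}$, i.e.\ the product of the $\A_{j}$ along $I$ in the order $\L_{\beta}$ accumulates factors. Iterating the definition of $\L_{\varphi_{m},\A_{m},\beta}$ exactly $n$ times gives
\[ \L_{\varphi_{m},\A_{m},\beta}^{n}f(x,z)=\sum_{\abs{I}=n,\ Ix\in\S_{T}^{+}}e^{S_{n}\varphi(I)}\norm{\A_{I}\frac{z}{\norm{z}}}^{\beta}f(Ix,\ol{\A_{I}}z), \]
where $Ix$ is the concatenation and $\ol{\A_{I}}$ is the projective map of Theorem~\ref{thm:conesmorris}. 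Thus $\L_{\varphi_{m},\A_{m},\beta}^{n}=\sum_{\abs{I}=n}\L_{I}$, a finite sum of weighted composition operators, where $\L_{I}$ precomposes with $(x,z)\mapsto(Ix,\ol{\A_{I}}z)$ and then multiplies by the weight.

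Next I would show the trace is additive over this finite sum and evaluate each term. Each $\L_{I}$ is nuclear, hence in $\mathfrak{L}_{1}^{(a)}$: in the $z$-variable $\ol{\A_{I}}$ maps $U$ into a compactly contained subset by Theorem~\ref{thm:conesmorris}(7), so on the Bergman factor $\L_{I}$ is built from the nuclear restriction operator of Section~\ref{sec:approx}, while in the $x$-variable the branch $x\mapsto Ix$ maps onto the cylinder $[I]$ and gives a contracting --- hence nuclear --- composition operator on $\lip(\S_{T}^{+},\set{\kappa_{n}})$, exactly as in \cite{morris2018fast}. So by Proposition~\ref{prop:tracecontinuity}, $\tr\L_{\varphi_{m},\A_{m},\beta}^{n}=\sum_{\abs{I}=n}\tr\L_{I}$. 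The term $\L_{I}$ has nonzero trace only when $x\mapsto Ix$ has a fixed point, i.e.\ only when $I$ is cyclic ($T_{i_{n-1}i_{0}}=1$); then that fixed point is the periodic point $x_{I}=\ol I\in\per_{n}(\sigma)$, and by Theorem~\ref{thm:conesmorris}(5) the map $\ol{\A_{I}}$ has a unique fixed point $z_{\A_{I}}\in U$, an eigenvector for $\lambda_{1}(\A_{I})$. The local (holomorphic Lefschetz) trace formula then reads
\[ \tr\L_{I}=e^{S_{n}\varphi(I)}\norm{\A_{I}\frac{z_{\A_{I}}}{\norm{z_{\A_{I}}}}}^{\beta}\frac{1}{\det(\mathrm{Id}-D_{z_{\A_{I}}}\ol{\A_{I}})}, \]
the symbolic direction contributing the factor $1$ because $x\mapsto Ix$ is a locally constant contraction near $x_{I}$ --- this is precisely Ruelle's classical identity $\tr\M^{n}=\sum_{x\in\per_{n}(\sigma)}e^{S_{n}\varphi(x)}$ for locally constant potentials. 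Using $\A_{I}z_{\A_{I}}=\lambda_{1}(\A_{I})z_{\A_{I}}$ we get $\norm{\A_{I}z_{\A_{I}}/\norm{z_{\A_{I}}}}=\abs{\lambda_{1}(\A_{I})}=\rho(\A_{I})$, and Theorem~\ref{thm:conesmorris}(5) gives $\det(\mathrm{Id}-D_{z_{\A_{I}}}\ol{\A_{I}})=p_{\A_{I}}'(\lambda_{1}(\A_{I}))/\lambda_{1}(\A_{I})^{d-1}$. Summing over cyclic words $I$ of length $n$, which correspond bijectively to $x\in\per_{n}(\sigma)$ via $I\mapsto\ol I$ (with $S_{n}\varphi(I)=S_{n}\varphi_{m}(\ol I)$ and $\A_{I}=\A^{(n)}_{m}(\ol I)$), yields the asserted formula.

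The crux is the local trace formula: that $\tr\L_{I}$ is exactly the holomorphic Lefschetz number at $(x_{I},z_{\A_{I}})$, with the Cantor direction contributing $1$ and the projective direction $1/\det(\mathrm{Id}-D_{z_{\A_{I}}}\ol{\A_{I}})$. The natural way to organize this is to realize $\B(U,\set{\kappa_{n}})$ as a closed subspace of a completed tensor product of $\lip(\S_{T}^{+},\set{\kappa_{n}})$ with a Bergman-type space, so that $\L_{I}$ respects the tensor splitting up to the scalar $e^{S_{n}\varphi(I)}$ and $\tr\L_{I}$ factors as (symbolic trace) $\times$ (Bergman trace); the symbolic factor is Ruelle's trace formula for locally constant potentials on a subshift of finite type, and the Bergman factor is the composition-operator trace formula of Bandtlow--Jenkinson \cite{MR2414325} combined with the fixed-point data in Theorem~\ref{thm:conesmorris}(5). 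All of this, including the nuclearity bookkeeping, is carried out in the generality we need in \cite{morris2018fast}; in practice I would verify the reduction to $m=1$ and the displayed identity and then invoke that reference.
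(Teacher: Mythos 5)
Your sketch is correct and coincides with the argument the paper relies on: the paper gives no proof of this lemma itself, citing Ruelle \cite{MR420720} and Morris \cite{morris2018fast}, and your reduction to one-step dependence, the expansion of $\L_{\varphi_{m},\A_{m},\beta}^{n}$ into weighted composition operators indexed by words, and the holomorphic Lefschetz evaluation at the fixed points $(x_{I},z_{\A_{I}})$ using Theorem \ref{thm:conesmorris}(5) is precisely the computation carried out there. Since you likewise defer the nuclearity and trace-factorization bookkeeping to \cite{morris2018fast}, your route is essentially the same as the paper's.
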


\begin{lemma}\label{lem:convergence}
	For all $n \geq 1$ $\L_{\varphi_{m},\A_{m},\beta}^{n}$ converges to $\L_{\beta}^{n}$ in the $\norm{\cdot}_{\B(U)}$ norm.
\end{lemma}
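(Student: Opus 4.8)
The plan is to estimate $\norm{\L_{\beta}^{n} - \L_{\varphi_m,\A_m,\beta}^{n}}_{\B(U),\op}$ by telescoping into $n$ single-step differences and bounding each $\norm{\L_{\beta} - \L_{\varphi_m,\A_m,\beta}}_{\B(U),\op}$. First I would write the standard telescoping identity
\[ \L_{\beta}^{n} - \L_{\varphi_m,\A_m,\beta}^{n} = \sum_{j=0}^{n-1} \L_{\beta}^{\,n-1-j}\bigl(\L_{\beta} - \L_{\varphi_m,\A_m,\beta}\bigr)\L_{\varphi_m,\A_m,\beta}^{\,j}, \]
so that, using submultiplicativity of the operator norm and the fact that the norms $\norm{\L_{\beta}}_{\B(U),\op}$ and $\norm{\L_{\varphi_m,\A_m,\beta}}_{\B(U),\op}$ are bounded uniformly in $m$ (which follows from the boundedness proposition together with the uniformity remark after Lemma \ref{lem:uniform_norm}, since $\var_k\varphi_m \le \var_k\varphi$ and $\var_k\A_m\le\var_k\A$ for all $k$, and $\norm{\varphi_m}_\infty\le\norm{\varphi}_\infty$, $\A_m(\S_T^+)$ still lies in the same dominated set up to shrinking constants), we get a bound of the form $n\, M^{n-1}\norm{\L_{\beta} - \L_{\varphi_m,\A_m,\beta}}_{\B(U),\op}$. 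Hence it suffices to show the single-step difference tends to $0$ as $m\to\infty$.

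For the single-step bound, I would revisit the proof of Lemma \ref{lem:variationinequality} and Lemma \ref{lem:uniform_norm} with $\varphi$ replaced by $\varphi - \varphi_m$ and $\A$ replaced by $\A - \A_m$ in the relevant slots. The point is that $\varphi - \varphi_m$ and $\A - \A_m$ vanish on the level of the first $m$ coordinates in the sense that $\norm{\varphi - \varphi_m}_\infty \le \var_m\varphi$ and $\norm{\A - \A_m}_\infty \le \var_m\A$, and moreover $\var_k(\varphi-\varphi_m) \le 2\var_k\varphi$ with the same holding for $\A$. Running the four-term splitting of \eqref{eq:terminsum} exactly as in Lemma \ref{lem:variationinequality} but tracking where $\varphi-\varphi_m$ and $\A-\A_m$ appear, I would get
\[ \norm{\L_{\beta} - \L_{\varphi_m,\A_m,\beta}}_{\B(U),\op} \le C\bigl(\var_m\A + \var_m\varphi + \abs{\A}_{\set{\kappa_k}}\,\kappa_m + \abs{\varphi}_{\set{\kappa_k}}\,\kappa_m\bigr), \]
where the first two terms come from the uniform ($\norm{\cdot}_\infty$-type) estimate of Lemma \ref{lem:uniform_norm} and the latter two from the Lipschitz-seminorm estimate, using that $\var_k(\A-\A_m)\le 2\kappa_{k+1}\abs{\A}_{\set{\kappa_k}}$ and likewise for $\varphi$. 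Since $\var_m\varphi,\var_m\A = O(\kappa_m) = O(e^{-cm^2}) \to 0$, the right-hand side tends to $0$ as $m\to\infty$.

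The main obstacle I anticipate is purely bookkeeping: one must be careful that replacing $\A$ by $\A_m$ does not disturb the structural hypothesis \textbf{(H1)} needed for $\L_{\varphi_m,\A_m,\beta}$ to even be defined on $\B(U)$ — i.e., that $\A_m(\S_T^+)$ is still multipositive with the same multicone $(\K_1,\dots,\K_m)$ and the same $U$, $K$. This holds because $\A_m(\S_T^+)\subseteq \ol{\mathrm{conv}}(\A(\S_T^+))$ (it is a set of values actually attained by $\A$), so the cone-mapping properties are inherited, and Proposition \ref{prop:locallyconstantcones} still applies after passing to a high enough block representation. Once that is in place the constant $C$ above can be taken uniform in $m$ by the remark following Lemma \ref{lem:uniform_norm}, and the telescoping estimate closes the argument. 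I would also remark that the same proof gives convergence at a quantitative rate $O(e^{-cm^2})$, which is what will actually be needed in Section \ref{sec:determinant}.
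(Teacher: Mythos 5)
Your overall strategy---reduce to $n=1$ by telescoping, then run a four-term splitting of the same shape as in Lemma~\ref{lem:variationinequality} on the difference $\L_\beta - \L_{\varphi_m,\A_m,\beta}$, together with the check that $\A_m$ still satisfies \textbf{(H1)} on the same multicone---is the route the paper takes in its appendix proof, and the qualitative conclusion (convergence in $\norm{\cdot}_{\B(U),\op}$) is indeed reached this way.

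The gap is in the Lipschitz-seminorm estimate and in the rate you claim. You invoke $\var_k(\A-\A_m)\leq 2\kappa_{k}\abs{\A}_{\set{\kappa_k}}$ (your $\kappa_{k+1}$ is a slip), but after dividing by $\kappa_k$ this gives $O(1)$, not $O(\kappa_m)$; while the alternative bound $\var_k(\A-\A_m)\leq 2\norm{\A-\A_m}_\infty=O(\kappa_m)$ yields $\kappa_m/\kappa_k$, which blows up as $k\to\infty$. Neither bound by itself controls $\sup_k \kappa_k^{-1}\var_k(\L_\beta-\L_{m,\beta})f(\cdot,z)$. The paper resolves this with a two-case analysis: for $k+1\geq m$ the locally constant pieces $\varphi_m,\A_m$ have zero $k$-th variation, so the inner difference collapses and one is left with $\var_{k+1}\varphi,\var_{k+1}\A = O(\kappa_{k+1})$, giving normalized size $\kappa_{k+1}/\kappa_k$; for $k+1<m$ one uses the sup-norm bound $O(\kappa_m)$, giving $\kappa_m/\kappa_k\leq\kappa_m/\kappa_{m-1}$. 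In both cases the supremum over $k$ is attained near $k\approx m$ and is $\approx e^{-c(m^p-(m-1)^p)}$, so the resulting operator-norm rate is $O(e^{-cp(m-1)^{p-1}})$---for $p=2$ this is $O(e^{-2c(m-1)})$, far slower than the $O(e^{-cm^2})$ asserted in your closing remark and in your displayed bound. Convergence still follows, so the lemma is proved, but your stated rate is incorrect; and for the record, the lemma following this one in Section~\ref{sec:traceformula} only uses convergence qualitatively, so no rate is needed there.
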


The proof of Lemma \ref{lem:convergence} is straight forward but long so we postpone it to the appendix.

\begin{lemma}
	For all $n \geq 1$
	\[ \lim_{m\to \infty}\sum_{k=1}^{\infty}a_{k}(\L_{\varphi_{m},\A_{m},\beta}^{n}- \L_{\beta}^{n}) = 0. \]
\end{lemma}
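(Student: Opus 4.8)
The plan is to combine the two facts we already have: Lemma \ref{lem:convergence}, which says $\L_{\varphi_{m},\A_{m},\beta}^{n} \to \L_{\beta}^{n}$ in operator norm on $\B(U)$, and Proposition \ref{prop:approxnum}, which tells us how approximation numbers interact with sums and products. The issue is that norm convergence alone does not imply convergence in the $\sum_k a_k$ sense, so we need to exploit the fact that all the operators involved live in $\mathfrak{L}^{(a)}_1$ in a \emph{uniformly summable} way. Concretely, I would first show that the approximation numbers $a_k(\L_{\varphi_m,\A_m,\beta}^{n})$ are dominated, uniformly in $m$, by a fixed summable sequence.

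The first step: reexamine the proof of Theorem \ref{thm:approxnumbers}. The finite-rank operators $E_{n',m'}$ and the resulting bounds $\norm{\L_\beta - \L_\beta E_{n',m'}}_{\B(U),\op}$ depend on $\varphi$ and $\A$ only through $\norm{\varphi}_\infty$, $\norm{\A}_\infty$, the variation bounds $\var_k\varphi, \var_k\A = O(\kappa_k)$, and the cone data from Theorem \ref{thm:conesmorris}. Since $\varphi_m$ is the locally constant truncation, $\var_k\varphi_m \leq \var_k\varphi$ for all $k$ and $\norm{\varphi_m}_\infty \leq \norm{\varphi}_\infty$, and likewise for $\A_m$ (and $\A_m(\S_T^+)$ is contained in the closed convex hull of $\A(\S_T^+)$, so it is still dominated with the same multicone — here one should note $\A_m$ satisfies \textbf{H1} for $m$ large enough, or pass to a higher block). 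Hence the same argument gives a single constant $C>0$, independent of $m$, with $a_k(\L_{\varphi_m,\A_m,\beta}^{n}) \leq C e^{-nkc(\log k)^{\,0}} \cdot (\text{something summable})$; more precisely, running the $p=2$ case of Theorem \ref{thm:approxnumbers} uniformly yields $a_k(\L_{\varphi_m,\A_m,\beta}^{n}) \leq C k^{-nc/h_{\tope}}$ with $C$ not depending on $m$, and $nc/h_{\tope} > 1$.

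The second step is to upgrade this to the desired limit. Fix $n$ and let $\eta>0$. Choose $N$ so large that $\sum_{k > N} C k^{-nc/h_{\tope}} < \eta$; then by Proposition \ref{prop:approxnum}(1) applied repeatedly, $\sum_{k>N} a_k(\L_{\varphi_m,\A_m,\beta}^{n} - \L_\beta^{n}) \leq \sum_{k>N}\big(a_{\lceil k/2\rceil}(\L_{\varphi_m,\A_m,\beta}^{n}) + a_{\lceil k/2\rceil}(\L_\beta^{n})\big) < 2\eta$ uniformly in $m$ (using that $\L_\beta^n$ itself satisfies the same bound, the $m=\infty$ case). For the finitely many indices $k \leq N$, each $a_k$ is $1$-Lipschitz in operator norm, so $a_k(\L_{\varphi_m,\A_m,\beta}^{n} - \L_\beta^{n}) \leq \norm{\L_{\varphi_m,\A_m,\beta}^{n} - \L_\beta^{n}}_{\B(U),\op} \to 0$ as $m \to \infty$ by Lemma \ref{lem:convergence}. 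Combining, $\limsup_{m\to\infty}\sum_{k=1}^\infty a_k(\L_{\varphi_m,\A_m,\beta}^{n} - \L_\beta^{n}) \leq 2\eta$, and since $\eta$ was arbitrary the sum tends to $0$.

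The main obstacle is the uniformity claim in the first step: one must go back through Lemma \ref{lem:variationinequality}, Lemma \ref{lem:uniform_norm}, Proposition \ref{prop:finite_dim_projections}, Lemma \ref{lem:lipbound}, and Proposition \ref{prop:finitedimapproxbound} and verify that every constant appearing there can be chosen independently of $m$ — the cone/Bergman-space constants $C_{K,W}$, $s_n(J)$, etc., are fixed once $U, W, \A$ are fixed and do not see the truncation, while the only $\varphi$- and $\A$-dependent quantities are monotone under truncation. The one genuine subtlety is ensuring $\A_m$ still satisfies \textbf{H1} (the $N=1$ multicone condition) for all large $m$; this follows because $\ol{\A_m(x)}$ converges uniformly to $\ol{\A(x)}$, so the argument of Proposition \ref{prop:locallyconstantcones} applies verbatim to $\A_m$ for $m$ sufficiently large, and for the finitely many small $m$ the statement is vacuous in the limit. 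Once uniformity is in hand, the rest is the routine $\eta/2$ splitting argument above.
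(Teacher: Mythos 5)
Your proposal is correct and takes essentially the same route as the paper: split the sum at a threshold $n_{0}$, bound the tail uniformly in $m$ via Proposition \ref{prop:approxnum} and the fact that $\var_{k}\varphi_{m},\var_{k}\A_{m}=O(e^{-ck^{2}})$ with an $m$-independent constant (so the approximation-number bounds from Theorem \ref{thm:approxnumbers} apply to $\L_{\varphi_{m},\A_{m},\beta}^{n}$ with the same constants), and control the finitely many remaining terms using $a_{k}(T)\leq\norm{T}$ together with the operator-norm convergence of Lemma \ref{lem:convergence}. The paper asserts the $m$-uniformity in a single line and does not explicitly flag the need for $\A_{m}$ to satisfy \textbf{(H1)} for large $m$, so the extra bookkeeping in your first paragraph is a legitimate strengthening of the exposition rather than a departure from it.
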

\begin{proof}
	The proof is the same as \cite[Corollary 5.4]{MR4147353} we provide it here for the sake of completeness. We will prove the result for $n=1$ the proof for $n \geq 1$ is similar. We will write $\L_{m,\beta}$ for $\L_{\varphi_{m}, \A_{m},\beta}$. Let $\e>0$. Recall that there exists and $N$ and a constant $C>0$ such that
	\[ a_{k}(\L_{\beta}) \leq C\exp\left[-2c \left(\frac{\log k}{2h_{\tope}} \right)\right] = Ck^{\frac{-c}{h_{\tope}}} \]
	As $c > h_{\tope}$ we can take $n_{0}$ such that 
	\[ \sum_{k \geq n_{0}} a_{k}(\L_{\beta}) < \e. \]
	Notice that as $\var_{n}\varphi_{m}, \var_{n}\A_{m} = O(e^{-cn^{2}})$ for the same $c>0$ we have that
	\[ \sum_{k \geq n_{0}} a_{k}(\L_{m,\beta}) < \e. \]
	By Lemma \ref{lem:convergence} we can take $m_{0}$ large enough such that 
	\[ n_{0}\norm{\L_{m,\beta}-\L_{\beta}}_{\B(U)} < \e  \]
	for all $m \geq m_{0}$. Now we write
	\[ \sum_{k=1}^{\infty}a_{k}(\L_{m,\beta} - \L_{\beta}) = \sum_{k=1}^{2n_{0}}a_{k}(\L_{m,\beta} - \L_{\beta}) + \sum_{k = 2n_{0}+1}^{\infty}a_{k}(\L_{m,\beta} - \L_{\beta}) \]
	and bound each term individually. Notice
	\[ \sum_{k=1}^{2n_{0}}a_{k}(\L_{m,\beta} - \L_{\beta})\leq 2n_{0}\norm{\L_{m,\beta}-\L_{\beta}}_{\B(U)} < 2 \e.  \]
	Furthermore as $a_{2k}(\L_{m,\beta} - \L_{\beta})\leq a_{2k-1}(\L_{m,\beta} - \L_{\beta}) \leq a_{k}(\L_{m,\beta}) + a_{k}(\L_{\beta})$ we have that
	\begin{align*}
	\sum_{k = 2n_{0}+1}^{\infty}a_{k}(\L_{m,\beta} - \L_{\beta})& = \sum_{l=n_{0}}^{\infty}a_{2k}(\L_{m,\beta} - \L_{\beta}) + \sum_{l=n_{0}+1}^{\infty}a_{2k-1}(\L_{m,\beta} - \L_{\beta}) \\
	&\leq 2\left[\sum_{l=n_{0}}^{\infty}a_{k}(\L_{m,\beta}) + \sum_{l=n_{0}}^{\infty}a_{k}(\L_{\beta})\right] < 4\e.
	\end{align*}
	Therefore
	\[ \sum_{k=1}^{\infty}a_{k}(\L_{m,\beta} - \L_{\beta}) < 6\e. \]
	Hence the result.
\end{proof}

Combining Proposition \ref{prop:tracecontinuity} and the previous lemma we have the following.

\begin{proposition}\label{prop:traceformula}
	If $\L_{\beta}^{n} \in \mathfrak{L}^{(a)}$ then 
	\[ \tr(\L_{\beta}^{n})= \sum_{x \in \per_{n}(\sigma)} e^{S_{n}\varphi(x)}\frac{\lambda_{1}(\A^{(n)}(x))^{d-1}\rho(\A^{(n)}(x))^{\beta}}{p_{\A^{(n)}(x)}'(\lambda_{1}(\A^{(n)}(x)))}. \]
\end{proposition}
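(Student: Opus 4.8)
The plan is to combine the preceding two lemmas with the continuity of the trace functional on $\mathfrak{L}_{1}^{(a)}$. First I would invoke the hypothesis that $\L_{\beta}^{n} \in \mathfrak{L}^{(a)}$ (which is guaranteed by the discussion following Theorem \ref{thm:approxnumbers}, since $a_{k}(\L_{\beta}) = O(k^{-c/h_{\tope}})$ with $c/h_{\tope}>1$), so that both $\tr(\L_{\beta}^{n})$ and $\tr(\L_{\varphi_{m},\A_{m},\beta}^{n})$ are defined and given by the sum of eigenvalues via Proposition \ref{prop:tracecontinuity}. The key observation is that the locally constant approximants $\varphi_{m}$ and $\A_{m}$ satisfy $\var_{j}\varphi_{m}, \var_{j}\A_{m} = O(e^{-cj^{2}})$ \emph{with the same constant $c$} (in fact $\var_{j}\varphi_{m}=0$ for $j \geq m$), so the bound $a_{k}(\L_{\varphi_{m},\A_{m},\beta}^{n}) = O(k^{-c/h_{\tope}})$ holds uniformly in $m$; this uniformity is exactly what the previous lemma exploits to conclude $\sum_{k}a_{k}(\L_{\varphi_{m},\A_{m},\beta}^{n} - \L_{\beta}^{n}) \to 0$ as $m \to \infty$.

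Given that, the argument is short. By Proposition \ref{prop:tracecontinuity}(3), the convergence of approximation numbers just established implies
\[ \lim_{m\to\infty}\tr(\L_{\varphi_{m},\A_{m},\beta}^{n}) = \tr(\L_{\beta}^{n}). \]
On the other hand, by the explicit trace formula for the locally constant operators (the lemma attributed to Ruelle),
\[ \tr(\L_{\varphi_{m},\A_{m},\beta}^{n}) = \sum_{x \in \per_{n}(\sigma)} e^{S_{n}\varphi_{m}(x)}\frac{\lambda_{1}(\A^{(n)}_{m}(x))^{d-1}\rho(\A^{(n)}_{m}(x))^{\beta}}{p_{\A^{(n)}_{m}(x)}'(\lambda_{1}(\A^{(n)}_{m}(x)))}. \]
So it remains to show that the right-hand side converges, as $m \to \infty$, to the same expression with $\varphi$ and $\A$ in place of $\varphi_{m}$ and $\A_{m}$. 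Since $\per_{n}(\sigma)$ is a \emph{finite} set, this reduces to a termwise limit: for each periodic point $x$ of period $n$, we have $\varphi_{m}(x) \to \varphi(x)$ and $\A_{m}(x) \to \A(x)$ (indeed, for $m \geq n$ one can arrange $x_{I}=x$ when $I$ is the length-$m$ prefix of $x$, or at worst use that $x_{I}$ lies in the same cylinder so the difference is controlled by $\var_{m}$, which tends to $0$). By continuity of $S_{n}\varphi$, of the matrix product $\A^{(n)}$, of the leading eigenvalue $\lambda_{1}$, of the spectral radius $\rho$, and of $p'_{(\cdot)}(\lambda_{1}(\cdot))$ — the last being nonzero by Theorem \ref{thm:conesmorris}(5), which applies since $\A(\S_{T}^{+})$ is dominated and hence the periodic matrix products lie in $\mathcal{S}(\mathsf{A})$ — each summand converges, and a finite sum of convergent sequences converges to the sum of the limits.

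The main obstacle, such as it is, lies in making the uniform-in-$m$ approximation-number bound precise, i.e. checking that the constants appearing in Theorem \ref{thm:approxnumbers} (through $h_{\tope}$, the cone data from Theorem \ref{thm:conesmorris}, and the constants in Lemma \ref{lem:lipbound} and Lemma \ref{lem:uniform_bound}) depend only on $c$ and on the ambient data $(T, \A(\S_{T}^{+}), U, K)$ and not on the particular locally constant approximants; since $\A_{m}$ takes values in the same dominated set $\A(\S_{T}^{+})$ and the same multicone works, this is genuinely automatic, but it is worth stating explicitly. Everything else — the convergence of the trace and the termwise limit of the finite periodic-point sum — is then immediate. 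One should also note that $\L_{\varphi_{m},\A_{m},\beta}$ genuinely acts on $\B(U)$ for all large $m$: this needs $\A_{m}$ to satisfy \textbf{(H1)}, which holds once $m$ exceeds the $N$ of Proposition \ref{prop:locallyconstantcones}, so the statement is understood for such $m$ and the limit $m\to\infty$ is unaffected.
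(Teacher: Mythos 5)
Your proof is correct and follows exactly the route the paper intends: the paper gives no explicit proof, merely noting that the proposition follows by ``combining Proposition \ref{prop:tracecontinuity} and the previous lemma.'' Your argument fills in the implicit steps — trace continuity applied to the approximation-number convergence, Ruelle's formula for the locally constant operators, and the termwise convergence of the finite periodic-point sum — and these are precisely what the paper's phrasing leaves to the reader.
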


\section{The  Determinant and a Formula for the Lyapunov Exponent}\label{sec:determinant}

In this section we will make the connection between the operators $\L_{\beta}$ and the Lyapunov exponent for the cocycle generated by $\A$ over the Gibbs state $\mu_{\varphi}$ and produce a formula for the Lyapunov exponent. The general method is the same as \cite{MR2651384}, \cite{MR4014663} so we will describe it somewhat briefly. The connection arises from the following fact, if $\rho(\L_{0})=1$
\[ \frac{d \rho(\L_{\beta})}{d \beta} \Big|_{\beta =0} = \gamma_{1}(\A, \mu_{\varphi}). \]
This fact is well known in our setup it can be deduced in the same way as \cite{park2020transfer}. Notice that in order to ensure that $\rho(\L_{\beta})$ is differentiable we need that $T$ is irreducible. In our case as the operators $\L_{\beta}$ are in $\mathfrak{L}^{(a)}$ there is an associated determinant $\delta(\zeta , \L_{\beta})$ for which $\delta(-\rho(\L_{\beta})^{-1}, \L_{\beta}) = 0$. Expanding $\delta(-\rho(\L_{\beta})^{-1}, \L_{\beta})$ as a Taylor series we have that
\[ 0 = \delta(-\rho(\L_{\beta})^{-1}, \L_{\beta}) = \sum_{n=0}^{\infty}(-1)^{n}\alpha_{n}(\L_{\beta})\rho(\L_{\beta})^{-n}. \]
Differentiating with respect to $\beta$ we find that
\[ \frac{d \rho(\L_{\beta})}{d \beta} = \frac{\sum_{n=0}^{\infty}(-1)^{n}\frac{d \alpha_{n}(\L_{\beta})}{d\beta}\rho(\L_{\beta})^{-n}}{\sum_{n=0}^{\infty}(-1)^{n} n \alpha_{n}(\L_{\beta}) \rho(\L_{\beta})^{-n-1}}  \]
If we assume that $\varphi$ is a $g$-function (so that $\rho(\L_{0}) =1$) and denote $\frac{d \alpha_{n}(\L_{\beta})}{d\beta}\Big|_{\beta = 0}$ by $\alpha_{n}'(0)$ and $\alpha_{n}(\L_{\beta})$ by $\alpha_{n}(\beta)$ we find that
\[ \gamma_{1}(\A,\mu_{\varphi}) =  \frac{\sum_{n=0}^{\infty}(-1)^{n}\alpha_{n}'(0)}{\sum_{n=0}^{\infty}(-1)^{n} n \alpha_{n}(0)}.  \]
Hence to obtain a method for approximating $\gamma_{1}(\A,\mu_{\varphi})$ we need a way to compute $\alpha_{n}(0)$ and $\alpha_{n}'(0)$ and also rates at which they converge to $0$.

First lets recall come facts about determinants, the following results can be found in \cite{MR890520}. The fact that $\mathfrak{L}_{1}^{(a)}$ admits a spectral trace gives us access to a suitable determinant theory. Recall the \emph{Fredholm resolvent} of $T$ is
\[ F(\zeta , T)=T(I+\zeta T)^{-1}. \]
Given a continuous determinant $\delta$ defined on an ideal $\mathfrak{U}$ the associated Fredholm denominator
\[ \delta(\zeta , T) = \delta(I+\zeta T) \]
is an entire function whose zeros $\zeta_{0}$ are related to the eigenvalues of $T$ by $\zeta_{0}=1/\lambda_{0}$. There exists an entire operator valued function, $D(\zeta , T)$ such that 
\[ F(\zeta , T)=\frac{D(\zeta , T)}{\delta(\zeta , T)}. \]

\begin{proposition}
	Let $\tau$ be the spectral trace defined on a quasi-Banach operator ideal $\mathfrak{U}$. If $T \in \mathfrak{U}(E)$ then the associated Fredholm denominator is given by the formula
	\[ \delta(\zeta , T) = \prod_{n=1}^{\infty}(1+\zeta \lambda_{n}(T)) \]
	for all $\zeta \in \C$.
\end{proposition}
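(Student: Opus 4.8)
The plan is to apply Hadamard's factorization theorem to the entire function $\zeta\mapsto\delta(\zeta,T)$ and then pin down the resulting exponential prefactor using the value and the derivative of $\delta(\cdot,T)$ at $\zeta=0$. Throughout write $\lambda_{n}=\lambda_{n}(T)$ and
\[ g(\zeta):=\prod_{n=1}^{\infty}(1+\zeta\lambda_{n}). \]

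First I would check that $\delta(\cdot,T)$ has order at most $1$ and identify its zeros. Recall that $\delta(\zeta,T)=\delta(I+\zeta T)$ is entire with $\delta(0,T)=\delta(I)=1$, and that its zeros are exactly the points $\zeta$ with $-1/\zeta\in\spec(T)\setminus\set{0}$, i.e.\ the numbers $-1/\lambda_{n}$; moreover, from the determinant theory on operator ideals (\cite{MR890520}) the order of the zero of $\delta(\cdot,T)$ at $-1/\lambda_{n}$ equals the algebraic multiplicity of $\lambda_{n}$, and there is a growth bound $\abs{\delta(I+S)}\leq\prod_{k}(1+a_{k}(S))\leq\exp\bigl(\sum_{k}a_{k}(S)\bigr)$; applied to $S=\zeta T$ this gives $\abs{\delta(\zeta,T)}\leq\exp\bigl(\abs{\zeta}\sum_{k}a_{k}(T)\bigr)$, so $\delta(\cdot,T)$ has order $\leq 1$. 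Since $T$ lies in an ideal on which a spectral trace is defined we have $\sum_{n}\abs{\lambda_{n}}<\infty$ (this is part of what it means to carry a spectral trace; cf.\ Proposition \ref{prop:tracecontinuity}(1) and \cite{MR890520}), so the zeros $\zeta_{n}=-1/\lambda_{n}$ satisfy $\sum_{n}\abs{\zeta_{n}}^{-1}<\infty$; hence the canonical (Weierstrass) product over the zeros of $\delta(\cdot,T)$ has genus $0$ and equals $\prod_{n}(1-\zeta/\zeta_{n})=g(\zeta)$, which in particular converges and is entire. Hadamard's factorization theorem therefore yields
\[ \delta(\zeta,T)=e^{p(\zeta)}\,g(\zeta) \]
for some polynomial $p$ with $\deg p\leq 1$.

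It remains to show $p\equiv 0$. Evaluating at $\zeta=0$ gives $1=\delta(0,T)=e^{p(0)}g(0)=e^{p(0)}$, so we may take $p(\zeta)=b\zeta$. For the linear coefficient, differentiate $\log\delta(\zeta,T)$: by the standard identity relating the determinant to its spectral trace, $\delta'(\zeta,T)/\delta(\zeta,T)=\tau\bigl(F(\zeta,T)\bigr)=\tau\bigl(T(I+\zeta T)^{-1}\bigr)$ (\cite{MR890520}), so at $\zeta=0$ this equals $\tau(T)$, and $\tau(T)=\sum_{n}\lambda_{n}$ by Proposition \ref{prop:tracecontinuity}(2). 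On the other hand $\frac{d}{d\zeta}\log\bigl(e^{b\zeta}g(\zeta)\bigr)\big|_{\zeta=0}=b+g'(0)/g(0)=b+\sum_{n}\lambda_{n}$. Comparing, $b=0$, so $p\equiv 0$ and $\delta(\zeta,T)=g(\zeta)$ for all $\zeta\in\C$.

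The substantive inputs are the two facts quoted from \cite{MR890520}: the growth estimate $\abs{\delta(I+S)}\leq\prod_{k}(1+a_{k}(S))$, which forces finite order and hence makes Hadamard applicable, and the identification of the order of each zero of $\delta(\cdot,T)$ with the algebraic multiplicity of the corresponding eigenvalue. If one wished to avoid Hadamard's theorem, an alternative is to approximate $T$ in the $\sum_{k}a_{k}(\cdot)$ sense by finite-rank operators $F_{j}$ (possible by the definition of the ideal), use $\delta(\zeta,F_{j})=\det(I+\zeta F_{j})=\prod_{n}(1+\zeta\lambda_{n}(F_{j}))$ from finite-dimensional linear algebra, and pass to the limit via continuity of $\delta$; the main obstacle in that route is proving $\prod_{n}(1+\zeta\lambda_{n}(F_{j}))\to\prod_{n}(1+\zeta\lambda_{n}(T))$, i.e.\ continuity of the eigenvalue sequence in the approximation-number norm, which itself relies on the Weyl-type inequalities underlying Proposition \ref{prop:tracecontinuity}.
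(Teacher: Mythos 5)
The paper does not prove this proposition; it is quoted verbatim as one of the background facts imported from Pietsch \cite{MR890520} (``the following results can be found in \cite{MR890520}''), so there is no in-paper argument for you to match. Your Hadamard route is correct in outline, and you have in fact already flagged the real issue yourself: most of the substance is routed back through \cite{MR890520}. Specifically you need (i) a growth estimate that forces $\delta(\cdot,T)$ to have order at most one, (ii) the fact that the zero of $\delta(\cdot,T)$ at $-1/\lambda_{n}(T)$ has order equal to the algebraic multiplicity of $\lambda_{n}(T)$, and (iii) the logarithmic-derivative identity $\delta'(\zeta,T)/\delta(\zeta,T)=\tau(F(\zeta,T))$. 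Item (ii) is already most of the theorem: once you know that $\delta(\cdot,T)$ is entire of order $\leq 1$, vanishes exactly at $\set{-1/\lambda_{n}(T)}$ with the correct multiplicities, and that $\sum_{n}\abs{\lambda_{n}(T)}<\infty$, the genus-zero infinite product is essentially forced, and Hadamard only contributes an exponential prefactor of degree $\leq 1$, which you then eliminate by evaluating $\delta$ and $\delta'/\delta$ at $\zeta=0$. That evaluation step is a clean elementary way to kill the linear term (an alternative is to observe that $\sum_{k}a_{k}(\zeta T)=\abs{\zeta}\sum_{k}a_{k}(T)$ gives order one of minimal type, so the genus-zero canonical product already accounts for all of the growth and no nonconstant exponential factor can appear). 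In sum: the argument is sound and a sensible reconstruction, but it is a proof modulo Pietsch rather than an independent one, which is consistent with how the paper itself treats the statement --- as a black box to be cited, not proved.
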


\begin{theorem}
	Let $\tau$ be the trace on $\mathfrak{L}_{1}^{(a)}$. If $T \in \mathfrak{L}_{1}^{(a)}$ then the coefficients of the associated Fredholm denominator 
	\[ \delta(\zeta , T)=\sum_{n=0}^{\infty}\alpha_{n}(T)\zeta^{n} \]
	and of the associated Fredholm numerator
	\[ D(\zeta , T)=\sum_{n=0}^{\infty}A_{n}\zeta^{n} \]
	are determined in the following way:
	\[ \alpha_{0}(T) = 1 \text{ and }A_{0}(T)=T. \]
	For $n \geq 1$ we have that 
	\begin{eqnarray}
	\alpha_{n}(T)=\frac{1}{n}\tau(A_{n-1}(T)) \\
	A_{n}(T) = \sum_{h=0}^{n}(-1)^{n-h}\alpha_{h}(T)T^{n-h+1}
	\end{eqnarray}
	In addition we have 
	\begin{eqnarray}
	\alpha_{n}(T) = \frac{1}{n!}\det\pmat{\tau(T)&n-1& & &0 \\ 
		\tau(T^{2})&\tau(T)&n-2& & \\
		\vdots & \vdots & &\ddots & \\
		\tau(T^{n-1})& \tau(T^{n-2}) & \cdots & \tau(T)& 1 \\
		\tau(T^{n})&\tau(T^{n-1})& \cdots &\tau(T^{2})&\tau(T) } \\
	A_{n}(T) = \frac{1}{n!}\det\pmat{\tau(T)&n& & &0 \\ 
		\tau(T^{2})&\tau(T)&n-1& & \\
		\vdots & \vdots & &\ddots & \\
		\tau(T^{n})& \tau(T^{n-1}) & \cdots & \tau(T)& 1 \\
		T^{n+1}&T^{n}& \cdots &T^{2}& T }.
	\end{eqnarray}
\end{theorem}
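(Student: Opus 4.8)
The plan is to reduce the whole statement to two generating-function identities that are already in hand: the product formula $\delta(\zeta,T)=\prod_{n\ge1}(1+\zeta\lambda_n(T))$ from the preceding proposition, and the defining relations $F(\zeta,T)=T(I+\zeta T)^{-1}$ and $D(\zeta,T)=F(\zeta,T)\delta(\zeta,T)$ of the Fredholm numerator. Everything to be proved is then the extraction of Taylor coefficients, and since $\delta(\cdot,T)$ and $D(\cdot,T)$ are entire it is enough to identify these coefficients on a neighbourhood of $\zeta=0$, where the product, its logarithmic derivative, and the Neumann series for $(I+\zeta T)^{-1}$ all converge.

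First I would differentiate the product logarithmically: for $\abs{\zeta}$ small,
\[ \frac{\delta'(\zeta,T)}{\delta(\zeta,T)} &= \sum_{n\ge1}\frac{\lambda_n(T)}{1+\zeta\lambda_n(T)} \\ &= \sum_{k\ge1}(-1)^{k-1}\Big(\sum_{n\ge1}\lambda_n(T)^k\Big)\zeta^{k-1} = \sum_{k\ge1}(-1)^{k-1}\tau(T^k)\,\zeta^{k-1}, \]
where termwise differentiation and the interchange of sums are legitimate because $\sum_n\abs{\lambda_n(T)}<\infty$ (Proposition \ref{prop:tracecontinuity}(1)), and where $\sum_n\lambda_n(T)^k=\tau(T^k)$ since $T^k\in\mathfrak{L}_1^{(a)}$ (an ideal) has eigenvalue sequence $\{\lambda_n(T)^k\}$ counted with algebraic multiplicity, so Proposition \ref{prop:tracecontinuity}(2) applies. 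Writing $\delta(\zeta,T)=\sum_m\alpha_m(T)\zeta^m$ and comparing coefficients of $\zeta^{n-1}$ in $\delta'=\delta\cdot(\delta'/\delta)$ produces the recursion $n\,\alpha_n(T)=\sum_{k=1}^n(-1)^{k-1}\tau(T^k)\,\alpha_{n-k}(T)$ with $\alpha_0(T)=1$; since $\tau$ is linear its right-hand side equals $\tau\big(\sum_{h=0}^{n-1}(-1)^{n-1-h}\alpha_h(T)T^{n-h}\big)=\tau(A_{n-1}(T))$ once $A_{n-1}(T)$ is defined by the stated formula, which is exactly $\alpha_n(T)=\tfrac1n\tau(A_{n-1}(T))$.

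Next, for the numerator, I would expand $F(\zeta,T)=T(I+\zeta T)^{-1}=\sum_{j\ge0}(-1)^j\zeta^jT^{j+1}$ (convergent in operator norm for $\abs{\zeta}<\norm{T}^{-1}$), multiply by $\delta(\zeta,T)=\sum_m\alpha_m(T)\zeta^m$, and read off that the coefficient of $\zeta^n$ in $D(\zeta,T)=F(\zeta,T)\delta(\zeta,T)$ is $\sum_{j+m=n}(-1)^j\alpha_m(T)T^{j+1}=\sum_{h=0}^n(-1)^{n-h}\alpha_h(T)T^{n-h+1}$; by uniqueness of the Taylor expansion of the entire function $D(\cdot,T)$ this is the claimed $A_n(T)$, and $A_0(T)=\alpha_0(T)T=T$. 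Finally the two determinant formulas are purely algebraic: the recursion $n\,\alpha_n(T)=\sum_{k=1}^n(-1)^{k-1}\tau(T^k)\alpha_{n-k}(T)$, $n\ge1$, is a lower-triangular linear system for $\alpha_1(T),\dots,\alpha_N(T)$ with diagonal $1,2,\dots,N$ and right-hand side built from $\tau(T),\dots,\tau(T^N)$, so Cramer's rule expresses $\alpha_N(T)$ as $\tfrac1{N!}$ times a determinant; rearranging signs and the superdiagonal integers into the displayed form — equivalently, expanding the displayed $N\times N$ determinant along its last column and inducting on $N$ to recover the recursion — finishes the $\alpha$-formula, and the same manipulation with the last row $T^{N+1},T^N,\dots,T$ gives the $A_N(T)$ determinant. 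These are the classical Newton--Girard / Faddeev--LeVerrier / Plemelj--Smithies identities.

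I expect the one genuinely delicate point to be the analytic justification of these manipulations in the Banach-space (rather than Hilbert-space) setting: that $\tau(T^k)$ is the $k$-th power sum of the eigenvalues of $T$, that the infinite product may be differentiated termwise, and that an identity of Taylor coefficients established near $\zeta=0$ persists for the entire functions $\delta(\cdot,T)$ and $D(\cdot,T)$. These follow from the Riesz spectral theory of operators in $\mathfrak{L}_1^{(a)}$ together with Proposition \ref{prop:tracecontinuity}, but the cleanest rigorous route is probably to verify all four identities first for finite-rank $T$ — where $\delta(\zeta,T)=\det(I+\zeta T)$ and $D(\zeta,T)=T\,\mathrm{adj}(I+\zeta T)$ literally and the identities reduce to classical matrix computations — and then pass to a general $T\in\mathfrak{L}_1^{(a)}$ by choosing finite-rank $F_m$ with $\sum_k a_k(T-F_m)\to0$, using the continuity of $\tau$ (Proposition \ref{prop:tracecontinuity}(3)) and the continuity in the same topology of $\alpha_n(\cdot)$, $A_n(\cdot)$ and of $\delta(\zeta,\cdot),D(\zeta,\cdot)$ (locally uniformly in $\zeta$) furnished by the determinant theory of \cite{MR890520}.
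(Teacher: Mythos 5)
This theorem is stated in the paper without proof; it is recalled verbatim from the reference \cite{MR890520} (together with the preceding proposition giving the product formula for $\delta$), so there is no in-paper argument to compare against. Your reconstruction is the correct and indeed classical route: taking the logarithmic derivative of $\prod_n(1+\zeta\lambda_n(T))$, interchanging the two absolutely convergent sums, invoking the spectral identity $\sum_n\lambda_n(T)^k=\tau(T^k)$, and matching Taylor coefficients yields the Newton--Girard recursion $n\,\alpha_n=\sum_{k=1}^n(-1)^{k-1}\tau(T^k)\alpha_{n-k}$; expanding $F(\zeta,T)=\sum_{j\ge0}(-1)^j\zeta^jT^{j+1}$ and multiplying by $\delta$ gives the $A_n$ formula, from which $\alpha_n=\tfrac1n\tau(A_{n-1})$ is the recursion reshuffled by linearity of $\tau$; and the two displayed determinants are the Cramer's-rule solution of that lower-triangular linear system. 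This is exactly the Plemelj--Smithies derivation carried out in Pietsch's book. The one step that genuinely deserves care, and which you correctly flag, is that $\tau(T^k)$ equals the $k$-th power sum of the $\lambda_n(T)$: this requires (i) that $\mathfrak{L}_1^{(a)}$ is an ideal so $T^k\in\mathfrak{L}_1^{(a)}$, and (ii) the Riesz spectral mapping theorem for the eigenvalue sequence with algebraic multiplicities of a Riesz operator on a Banach space. Your fallback strategy of proving everything for finite-rank operators (where $\delta(\zeta,T)=\det(I+\zeta T)$ and the identities are literal matrix algebra) and then passing to the limit using the $a$-summable convergence and the continuity properties of $\tau$, $\alpha_n$, and the Fredholm data is the standard rigorous way to discharge that point, and is essentially how the reference proceeds.
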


By Proposition \ref{prop:traceformula} we have a formula for $\tr(\L_{\beta}^{n})$ in terms of the $\varphi$ and the eigenvalues for $\A^{(n)}(p)$ where $p$ is a periodic point of period $n$. Hence using the proceeding theorem we may obtain a formula for $\alpha_{n}(0)$ and $\alpha_{n}'(0)$ in terms of $\varphi$ and the eigenvalues for $\A^{(n)}(p)$ where $p$ is a periodic point of period $\leq n$.

Next we turn our attention to determining the rate at which $\alpha_{n}(0)$ and $\alpha_{n}'(0)$ converge to $0$. Notice that because $\beta \mapsto \alpha_{n}(\beta)$ is analytic by Cauchy's integral formula if we can bound $\alpha_{n}(\beta)$ for all $\beta$ small we will have the same bound for $\alpha_{n}'(0)$. Before diving into the details of the proof we state the theorem.

\begin{theorem}\label{thm:taylorbound}
	Suppose that 
	\[  \kappa_{n}=O(e^{-cn^{2}}) \text{ where }c > h_{\tope}. \]
	For any $\beta$ the Taylor coefficients of the determinant for $\L_{\beta}$ satisfy
	\[ \alpha_{n}(\beta) = O(n^{-nk}). \]
	Where
	\[ k = \frac{2c - h_{\tope}}{4 h_{\tope}}. \]
\end{theorem}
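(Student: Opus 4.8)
The plan is to combine the determinant formula
\[ \alpha_n(\beta) = \frac{1}{n!}\det\pmat{\tau(\L_\beta)&n-1& & &0 \\ \tau(\L_\beta^{2})&\tau(\L_\beta)&n-2& & \\ \vdots & \vdots & &\ddots & \\ \tau(\L_\beta^{n-1})& \tau(\L_\beta^{n-2}) & \cdots & \tau(\L_\beta)& 1 \\ \tau(\L_\beta^{n})&\tau(\L_\beta^{n-1})& \cdots &\tau(\L_\beta^{2})&\tau(\L_\beta) } \]
with the eigenvalue decay rate from Theorem \ref{thm:eigenvalues}. First I would record that, by Theorem \ref{thm:approxnumbers} with $p=2$ and $\e=h_{\tope}$, we have $a_n(\L_\beta) = O(n^{-c/h_{\tope}})$ with $c/h_{\tope}>1$, so $\L_\beta \in \mathfrak{L}^{(a)}_1$ and the determinant theory of the previous theorem applies. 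More importantly, Theorem \ref{thm:eigenvaluestech} gives $\abs{\lambda_j(\L_\beta)} = O(j^{-k_0})$ where $k_0 = \frac{2c}{h_{\tope}}-\frac12$ (taking $\e=h_{\tope}$; one could also optimize $\e$, but this suffices). The standard route to bounding $\alpha_n$ is \emph{not} through the Hadamard-type determinant of traces but through the product formula $\delta(\zeta,\L_\beta) = \prod_{j=1}^\infty (1+\zeta\lambda_j(\L_\beta))$ from the proposition immediately preceding the last theorem: the $n$-th Taylor coefficient of this product is the $n$-th elementary symmetric function
\[ \alpha_n(\beta) = e_n(\lambda_1(\L_\beta),\lambda_2(\L_\beta),\ldots) = \sum_{j_1<j_2<\cdots<j_n}\lambda_{j_1}(\L_\beta)\cdots\lambda_{j_n}(\L_\beta). \]

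The key estimate is then a clean symmetric-function bound. With $\abs{\lambda_j} \le M j^{-k_0}$ for a constant $M$ (uniform in $\beta$ on bounded sets, since the constant in Theorem \ref{thm:approxnumbers} is continuous in $\beta$ by the Remark following Lemma \ref{lem:uniform_norm}), I would estimate
\[ \abs{e_n(\lambda_1,\lambda_2,\ldots)} \le e_n\big(\abs{\lambda_1},\abs{\lambda_2},\ldots\big) \le M^n \, e_n\!\left(1^{-k_0}, 2^{-k_0}, 3^{-k_0},\ldots\right). \]
To bound $e_n(1^{-k_0},2^{-k_0},\ldots)$ I would use the standard fact that among index tuples $j_1<\cdots<j_n$ the product $\prod (j_\ell)^{-k_0}$ is maximized when $(j_1,\ldots,j_n)=(1,2,\ldots,n)$ on any single term, but since there are $\binom{\text{many}}{n}$ terms one needs the sharper inequality obtained from Maclaurin/Newton or directly: for a decreasing nonnegative sequence $\mu_j$, $e_n(\mu) \le \frac{1}{n!}\big(\sum_j \mu_j\big)^n$ is too lossy, so instead I would use the bound $e_n(\mu_1,\mu_2,\ldots) \le \prod_{\ell=1}^n \big(\sum_{j\ge \ell}\mu_j - (\ell-1)\mu_\ell\big)$-type telescoping, or more simply the estimate that appears in the dimension-theory literature (Pollicott, Jenkinson--Pollicott): for $\mu_j = M j^{-k_0}$,
\[ e_n(\mu_1,\mu_2,\ldots) \le M^n \prod_{\ell=1}^{n}\left(\sum_{j=\ell}^{\infty} j^{-k_0}\right) \le M^n \prod_{\ell=1}^n \frac{C}{(\ell-1+1)^{k_0-1}} \]
(valid since $k_0>1$, using $\sum_{j\ge\ell}j^{-k_0} \le C\ell^{-(k_0-1)}$), which gives $\alpha_n(\beta) = O\!\big((MC)^n (n!)^{-(k_0-1)}\big) = O(n^{-(k_0-1)n})$ after Stirling. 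Finally I would compute $k_0 - 1 = \frac{2c}{h_{\tope}} - \frac32$; a slightly more careful bookkeeping — using that the term-by-term maximum uses $(1,\ldots,n)$ so one really gets $(n!)^{k_0}$ weakened by the number of competing tuples — yields the stated exponent $k = \frac{2c-h_{\tope}}{4h_{\tope}} = \frac{c}{2h_{\tope}} - \frac14$, i.e. roughly half of $k_0$; I would reconcile the exact constant by the counting argument that each of the $\binom{N}{n}$ tuples with entries $\le N$ contributes, and optimizing $N$ against $N^{-k_0 n}\binom{N}{n}$ produces the factor-of-two loss.

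The main obstacle is getting the \emph{exact} constant $k = \frac{2c-h_{\tope}}{4h_{\tope}}$ rather than a cruder one: the naive "plug in $(1,2,\ldots,n)$" bound overcounts, and the naive $\frac{1}{n!}(\sum\mu_j)^n$ bound undercounts the cancellation, so the right balance — matching the $\frac12$ that already appeared (via the $n^{n/2}$ in Weyl's inequality) in the passage from approximation numbers to eigenvalues in Theorem \ref{thm:eigenvaluestech} — requires care. Concretely I expect the proof to: (i) fix a cutoff $N = N(n)$, split $e_n$ into tuples with all indices $\le N$ and tuples using some index $>N$; (ii) bound the first part by $\binom{N}{n}\max_{j\le N}\abs{\lambda_j}^n$-style or better by $\frac{1}{n!}(\sum_{j\le N}\abs{\lambda_j})^n$; (iii) bound the tail part using the convergence $\sum a_k(\L_\beta)<\infty$; (iv) optimize $N$ in $n$. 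The bookkeeping in (ii)–(iv), together with Stirling, is where the exponent $-nk$ with the claimed $k$ emerges, and checking that $2c>h_{\tope}$ guarantees $k>0$ so the bound is genuinely decaying.
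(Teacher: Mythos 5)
Your route is genuinely different from the paper's, and its core is viable. The paper never touches elementary symmetric functions directly: it bounds $\abs{\delta(\zeta,\L_\beta)}\le\prod_n(1+\abs{\zeta}b_n)$ with $b_n=Cn^{-k_0}$, observes that this majorant is an entire function of genus zero, applies Lemma \ref{lem:Mbound} together with the zero-counting estimate $n(t)=O(t^{1/k_0})$ to get $\log M(r)=O(r^{1/k_0})$, hence order at most $1/k_0$, and then invokes the classical order--coefficient relation $\limsup_n \frac{n\log n}{-\log\abs{\alpha_n(\beta)}}\le 1/k_0$; the factor $\tfrac12$ in the final exponent comes from the slack $R=k_0^{-1}+\e$ with $\e=1/k_0$ in that last conversion, not from Weyl's $n^{n/2}$ or from any tuple counting, contrary to your guess. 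Your direct estimate $\abs{\alpha_n(\beta)}\le e_n(\abs{\lambda_1},\abs{\lambda_2},\ldots)\le\prod_{\ell=1}^n\sum_{j\ge\ell}\abs{\lambda_j}$ is correct (every strictly increasing tuple has $j_\ell\ge\ell$, and all terms are nonnegative) and, combined with Stirling, is arguably cleaner than the paper's argument; moreover, since the theorem only claims $O(n^{-nk})$, it suffices that your exponent $k_0-1$ (less an arbitrarily small loss absorbing the geometric factor $(MC)^n$) exceeds $\frac{2c-h_{\tope}}{4h_{\tope}}$, which it does once the $\e$ of Theorem \ref{thm:eigenvaluestech} is small, because $\frac{2c}{h_{\tope}}-\frac32>\frac{2c-h_{\tope}}{4h_{\tope}}$ whenever $c>h_{\tope}$. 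So your entire closing discussion about reconciling the exact constant via a cutoff $N$, $\binom{N}{n}$ counting, and a factor-of-two loss is unnecessary and is not an argument; it should be dropped rather than repaired.

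The one genuine gap is your parameter choice. With $\e=h_{\tope}$ you only get $k_0=\frac{c}{h_{\tope}}-\frac12$, and $c>h_{\tope}$ gives merely $k_0>\frac12$: if $c\le\frac32 h_{\tope}$ then $k_0\le 1$, the tails $\sum_{j\ge\ell}j^{-k_0}$ diverge, and the key bound collapses (and even when $k_0>1$, the exponent $k_0-1=\frac{c}{h_{\tope}}-\frac32$ beats the claimed $k$ only if $c\ge\frac52 h_{\tope}$). So the parenthetical ``this suffices'' is false in general; you must choose $\e$ small enough that $\frac{2c}{h_{\tope}+\e}-\frac12>1$ --- the paper does exactly this, taking $\e=h_{\tope}/3$ --- and then check $k_0-1\ge\frac{2c-h_{\tope}}{4h_{\tope}}$ for $\e$ sufficiently small. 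With that fix, together with the uniformity in $\beta$ of the constants that you already noted, your symmetric-function argument goes through and in fact yields a slightly stronger exponent than the one stated in the theorem.
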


We comment that the implied constant in Theorem \ref{thm:taylorbound} can be taken uniform for $\beta \leq 1$. This is because the implied constants used in the previous section depend continuously on $\beta$. Hence we may also conclude that
\[ \alpha_{n}'(0) = O(n^{-nk}) \]
for the same $k$. Once we establish Theorem \ref{thm:taylorbound} we will have the following which is the main result of the paper.

\begin{theorem}\label{thm:mainconcrete}
	Let $\S_{T}^{+}$ be the shift of finite type defined by an irreducible matrix $T$, $g:\S_{T}^{+} \to \R$ be a $g$-function, $\A:\S_{T}^{+} \to GL_{d}(\R)$ be such that $\A(\S_{T}^{+})$ is dominated and
	\[ \var_{n}\log g, \var_{n}\A = O(e^{-cn^{2}})\text{ where }c > h_{\tope}. \]
	Let $\alpha_{i}(0)$ and $\alpha_{i}'(0)$ be as above and define
	\[ \gamma_{1}^{(n)}(\A,\mu_{g}) = \frac{\sum_{i=0}^{n}(-1)^{i}\alpha_{i}'(0)}{\sum_{i=0}^{n} (-1)^{i}i \alpha_{i}(0)} \]
	Then $\gamma_{1}^{(n)}(\A,\mu_{g})$ is computable using the values of $g$ and $\A$ at periodic points of period $\leq n$ and
	\[ \abs{\gamma_{1}^{(n)}(\A,\mu_{g}) - \gamma_{1}(\A,\mu_{g})} = O(n^{-nk}) \]
	where
	\[ k = \frac{2c - h_{\tope}}{4 h_{\tope}}. \]
\end{theorem}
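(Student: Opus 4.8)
The plan is to assemble the proof of Theorem \ref{thm:mainconcrete} from the machinery already in place, with Theorem \ref{thm:taylorbound} as the one remaining substantive input. First I would record the computability claim: by Proposition \ref{prop:traceformula} we have $\tr(\L_\beta^n) = \sum_{x \in \per_n(\sigma)} e^{S_n\varphi(x)} \lambda_1(\A^{(n)}(x))^{d-1}\rho(\A^{(n)}(x))^\beta / p'_{\A^{(n)}(x)}(\lambda_1(\A^{(n)}(x)))$ with $\varphi = \log g$; since a period-$n$ orbit is determined by its first $n$ coordinates and $g$, $\A$ are evaluated there, each $\tr(\L_\beta^n)$ is a finite computation from the data at periodic points of period $\leq n$. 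Feeding these traces into the determinantal formulas (the $n!^{-1}$ determinant of the Hessenberg matrix of traces) expresses $\alpha_i(0)$ and, after differentiating in $\beta$ under the finite sum, $\alpha_i'(0)$ as finite expressions in the same data, so $\gamma_1^{(n)}(\A,\mu_g)$ is computable as claimed.

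Next I would establish that the infinite series actually represent $\gamma_1(\A,\mu_g)$. Since $T$ is irreducible, $\rho(\L_\beta)$ is simple and $\beta \mapsto \rho(\L_\beta)$ is real-analytic near $0$; because $g$ is a $g$-function, $\rho(\L_0)=1$ and $P(0)=0$, so $\delta(-\rho(\L_\beta)^{-1},\L_\beta)=0$ for $\beta$ near $0$, and expanding the entire function $\delta(\zeta,\L_\beta)=\sum_n \alpha_n(\beta)\zeta^n$ at $\zeta=-\rho(\L_\beta)^{-1}$ gives $\sum_n (-1)^n\alpha_n(\beta)\rho(\L_\beta)^{-n}=0$. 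Differentiating in $\beta$ at $0$ and using $\rho(\L_0)=1$ and $\frac{d}{d\beta}\rho(\L_\beta)|_0 = \gamma_1(\A,\mu_g)$ (the standard fact cited via \cite{park2020transfer}) yields $\gamma_1(\A,\mu_g) = \big(\sum_n (-1)^n\alpha_n'(0)\big)\big/\big(\sum_n (-1)^n n\,\alpha_n(0)\big)$, with both series absolutely convergent since $\L_\beta \in \mathfrak{L}_1^{(a)}$ (here $c>h_{\tope}$ is used). One should note that the denominator $\sum_n (-1)^n n\,\alpha_n(0) = -\rho(\L_0)\cdot\frac{d}{d\zeta}\delta(\zeta,\L_0)|_{\zeta=-1}$ is nonzero precisely because $\rho(\L_0)=1$ is a simple eigenvalue, so the quotient is well-defined.

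The error estimate is then a matter of truncation. By Theorem \ref{thm:taylorbound} (and the remark after it, giving the same bound for $\alpha_n'(0)$, uniformly for $\beta\leq 1$), both $|\alpha_n(0)|$ and $|\alpha_n'(0)|$ are $O(n^{-nk})$ with $k = (2c-h_{\tope})/(4h_{\tope})$; note $k>0$ since $c>h_{\tope}$. Hence $\big|\sum_{i>n}(-1)^i\alpha_i'(0)\big|$ and $\big|\sum_{i>n}(-1)^i i\,\alpha_i(0)\big|$ are both $O(n^{-nk})$ (the tail of a super-geometrically decaying series is dominated by a constant times its first term; the extra factor $i$ is harmless). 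Writing $\gamma_1^{(n)} - \gamma_1$ as a difference of the two quotients over a common denominator, and using that the true denominator is a fixed nonzero constant while the partial denominator converges to it, the difference is bounded by a constant times the sum of the two tail bounds, giving $|\gamma_1^{(n)}(\A,\mu_g) - \gamma_1(\A,\mu_g)| = O(n^{-nk})$.

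The main obstacle is Theorem \ref{thm:taylorbound} itself, which is not yet proved in the excerpt: bounding the Taylor coefficients $\alpha_n(\beta)$ of the determinant. The route is the determinant identity $\alpha_n(T) = \frac{1}{n!}\det(\cdot)$ with entries $\tr(T^j)$, combined with Weyl's inequality and the approximation-number bound of Theorem \ref{thm:approxnumbers}: from $a_n(\L_\beta) = O(n^{-c/h_{\tope}})$ one controls $\sum_{j}|\lambda_j(\L_\beta)|$ and more refined products, then a Hadamard-type estimate on the determinant of traces (or equivalently the product formula $\delta(\zeta,\L_\beta)=\prod_n(1+\zeta\lambda_n)$ together with $|\lambda_n(\L_\beta)| = O(n^{-k'})$ for $k' = 2c/h_{\tope} - 1/2$ from Theorem \ref{thm:eigenvaluestech}) yields $|\alpha_n(\beta)| \leq \binom{?}{n}^{\!?}\prod_{j=1}^n |\lambda_j| $-type bounds that, after Stirling, collapse to $O(n^{-nk})$ with the stated $k$. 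The bookkeeping relating the exponent $2c/h_{\tope}-1/2$ on the eigenvalues to the final $(2c-h_{\tope})/(4h_{\tope})$ on the coefficients (roughly, $\alpha_n$ picks up $\prod_{j\le n} j^{-k'} \approx (n!)^{-k'}$, i.e. growth like $n^{-nk'}$, then a factor-of-two loss from the $n^{n/2}$ in Weyl and from choosing $\e=h_{\tope}$) is the delicate part and where I would concentrate the care.
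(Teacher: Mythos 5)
Your assembly of Theorem \ref{thm:mainconcrete} from the three ingredients --- the implicit-function derivation of $\gamma_1 = \bigl(\sum(-1)^n\alpha_n'(0)\bigr)\big/\bigl(\sum(-1)^n n\alpha_n(0)\bigr)$, computability via Proposition \ref{prop:traceformula} together with the $\tfrac{1}{n!}\det$ identities for the Fredholm coefficients, and the truncation rate from Theorem \ref{thm:taylorbound} (plus the Cauchy-formula remark giving the same bound on $\alpha_n'(0)$) --- is exactly how the paper proceeds; the paper never writes a displayed proof for Theorem \ref{thm:mainconcrete}, but the paragraphs preceding and following Theorem \ref{thm:taylorbound} are precisely this argument, and your note that the denominator is nonzero because $\rho(\L_0)=1$ is a simple zero is a correct addition.

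Two small caveats. First, Theorem \ref{thm:taylorbound} is in fact proved in the paper, and not by the route your sketch contemplates: rather than a Hadamard-type estimate on the determinant of traces, or a Stirling collapse of $\prod_{j\le n}|\lambda_j|$, the paper takes the product form $\delta(\zeta,\L_\beta)=\prod(1+\zeta\lambda_n)$, bounds the zero-counting function $n(t) \lesssim t^{1/\tilde k}$ from the eigenvalue decay $|\lambda_n|\le C n^{-\tilde k}$ (with $\tilde k$ as in Theorem \ref{thm:eigenvalues}), applies Lemma \ref{lem:Mbound} to get $\log M(r) = O(r^{1/\tilde k})$ so the determinant is an entire function of order $\le \tilde k^{-1}$, and then reads off the coefficient decay from the order theorem $\mu = \limsup n\log n/\log(1/|a_n|)$; the final $k=\tilde k/2$ arises only from the explicit (and lossy) choice $\e = 1/\tilde k$ in that last step. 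Second, your assertion that ``the extra factor $i$ is harmless'' in $\sum_{i>n}i\,\alpha_i(0)$ deserves a word of justification: the first term of that tail is of size $(n+1)^{1-(n+1)k}$, which exceeds $n^{-nk}$ by a polynomial factor $(n+1)^{1-k}$ whenever $k<1$, and the hypothesis $c>h_{\tope}$ does permit $k<1$. This is easily repaired because the order argument actually yields $|\alpha_n| = O\bigl(n^{-n(\tilde k-\delta)}\bigr)$ for every $\delta>0$, i.e.\ there is slack to absorb the polynomial; but it is a genuine (if small) gap in the argument as written, and the paper itself does not address it either.
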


To prove the previous theorem we will need some facts about analytic functions. The following results can be found in \cite{alma9917579304202441}.

\begin{notation}
	Let $f$ be an entire function. Define
	\[ M(r)=\max_{\abs{z}=r}\abs{f(z)} \]
	Define $n(t)$ be the number number of zeros of $f$ in $\abs{z}\leq t$ counted with multiplicity. Define
	\[ N(r)=\int_{0}^{r}t^{-1}n(t) dt. \]
\end{notation}

\begin{definition}
	The entire function $f$ is of order $\rho$ if 
	\[ \rho = \limsup_{r \to \infty}\frac{\log \log M(r)}{\log r}. \]
\end{definition}

\begin{theorem}
	The entire function 
	\[ f(z)=\sum_{n=0}^{\infty} a_{n}z^{n} \]
	is of finite order if and only if 
	\[ \mu = \limsup_{n \to \infty}\frac{n \log n}{\log(1/\abs{a_{n}})} \]
	is finite. In this case the order of $f$ is equal to $\mu$.
\end{theorem}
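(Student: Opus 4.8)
The plan is to prove the two inequalities $\rho\le\mu$ and $\mu\le\rho$, where $\rho$ denotes the order of $f$; together they show that one quantity is finite precisely when the other is and that the two agree. Both directions exploit the classical duality between the growth of $M(r)$ and the decay of the Taylor coefficients: in one direction through Cauchy's estimate $\abs{a_{n}}\le M(r)r^{-n}$, valid for every $r>0$, and in the other through the trivial bound $M(r)\le\sum_{n}\abs{a_{n}}r^{n}$.

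First I would establish $\mu\le\rho$, assuming $\rho<\infty$ (there is nothing to prove otherwise). Fix $\e>0$. By the definition of order there is $r_{0}$ with $M(r)\le\exp(r^{\rho+\e})$ for all $r\ge r_{0}$, hence $\abs{a_{n}}\le r^{-n}\exp(r^{\rho+\e})$ for every such $r$. Minimising the right-hand side over $r$ — the optimal choice being $r=(n/(\rho+\e))^{1/(\rho+\e)}$ — yields $\log(1/\abs{a_{n}})\ge\tfrac{n}{\rho+\e}\bigl(\log\tfrac{n}{\rho+\e}-1\bigr)$ for $n$ large, a quantity asymptotic to $\tfrac{n\log n}{\rho+\e}$. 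Dividing, $\tfrac{n\log n}{\log(1/\abs{a_{n}})}$ is eventually bounded by a quantity tending to $\rho+\e$ (indices with $a_{n}=0$ contribute $0$), so $\mu\le\rho+\e$; letting $\e\downarrow 0$ gives $\mu\le\rho$. This direction is a clean one-variable optimisation with no summation to control.

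Conversely I would establish $\rho\le\mu$, assuming $\mu<\infty$. Fix $\e>0$; then for $n$ large $\log(1/\abs{a_{n}})\ge\tfrac{n\log n}{\mu+\e}$, i.e. $\abs{a_{n}}\le n^{-n/(\mu+\e)}$, and the finitely many remaining terms contribute only a polynomial in $r$. Thus $M(r)\le\sum_{n}\abs{a_{n}}r^{n}\le P(r)+\sum_{n}\exp\bigl(n\log r-\tfrac{n\log n}{\mu+\e}\bigr)$. Regarding the exponent $g(n)=n\log r-\tfrac{n\log n}{\mu+\e}$ as a function of a continuous variable, $g'(n)=\log r-\tfrac{\log n+1}{\mu+\e}$ vanishes at $n^{\ast}=r^{\mu+\e}/e$, where $g(n^{\ast})=\tfrac{r^{\mu+\e}}{e(\mu+\e)}$, so every term is at most $\exp\bigl(\tfrac{r^{\mu+\e}}{e(\mu+\e)}\bigr)$. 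Splitting the series at $n=(2r)^{\mu+\e}$: the head has at most $O(r^{\mu+\e})$ terms, each bounded by this maximum, while on the tail the ratio of consecutive terms is eventually $\le 1/2$ (since $g'(n)\le-\log 2$ there), so the tail is dominated by a geometric series. Both pieces are $\le\exp(Cr^{\mu+\e})$ for $r$ large, whence $\log\log M(r)\le(\mu+\e)\log r+O(1)$ and $\rho\le\mu+\e$; letting $\e\downarrow 0$ completes the proof.

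The main obstacle is the bookkeeping in this last step: one must locate the continuous maximiser of $g$ correctly and verify that the head/tail split genuinely yields a bound of the form $\exp(Cr^{\mu+\e})$ uniformly in $r$ — in particular that the factor $O(r^{\mu+\e})$ from counting the terms in the head is harmless, which it is because after taking logarithms twice it contributes only a $\log\log r$, negligible against $\log r$. No subtleties arise from zero coefficients or from $f$ being a polynomial, since then $a_{n}=0$ for all large $n$ forces $\mu=0=\rho$.
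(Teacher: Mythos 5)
Your argument is correct. Note that the paper does not prove this statement at all: it is quoted as a classical fact about entire functions from the cited reference on entire function theory, so there is no in-paper proof to compare against. What you have written is essentially the standard textbook derivation of that fact: for $\mu\le\rho$ the Cauchy estimate $\abs{a_{n}}\le M(r)r^{-n}$ optimised at $r=(n/(\rho+\e))^{1/(\rho+\e)}$, and for $\rho\le\mu$ the bound $M(r)\le\sum_{n}\abs{a_{n}}r^{n}$ with the exponent $n\log r-\tfrac{n\log n}{\mu+\e}$ maximised at $n^{*}=r^{\mu+\e}/e$ and a head/tail split to control the sum; both steps, including the handling of zero coefficients and the harmlessness of the polynomial prefactor after taking two logarithms, are carried out correctly.
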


\begin{lemma}\label{lem:Mbound}
	If $f(z)$ is an entire function of genus $0$ with $f(0)=1$ we have 
	\[ \log M(r) \leq N(r) + Q(r) \]
	where 
	\[ Q(r) = r \int_{r}^{\infty}t^{-2}n(t)dt. \]
\end{lemma}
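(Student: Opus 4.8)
The plan is to derive this from the Hadamard factorization theorem. Since $f$ is entire of genus $0$ with $f(0)=1$, it has no zero at the origin, no exponential factor, and no monomial factor, so it factors as $f(z)=\prod_{n}\left(1-z/z_{n}\right)$, where $z_{n}$ runs over the zeros of $f$ repeated with multiplicity and, by the genus-$0$ hypothesis, $\sum_{n}\abs{z_{n}}^{-1}<\infty$. Writing $r_{n}=\abs{z_{n}}$, the first step is the elementary pointwise estimate: for $\abs{z}=r$,
\[ \log\abs{f(z)}=\sum_{n}\log\abs{1-\frac{z}{z_{n}}}\le\sum_{n}\log\left(1+\frac{r}{r_{n}}\right), \]
using $\abs{1-z/z_{n}}\le 1+r/r_{n}$, and since the right-hand side does not depend on $\arg z$, taking the supremum over the circle $\abs{z}=r$ gives $\log M(r)\le\sum_{n}\log(1+r/r_{n})$.

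Next I would express the sum $\sum_{n}\log(1+r/r_{n})$ as the Stieltjes integral $\int_{0}^{\infty}\log(1+r/t)\,dn(t)$ against the zero-counting function $n(t)$ and integrate by parts. Using the identity $\frac{d}{dt}\log(1+r/t)=-\frac{r}{t(t+r)}$, and checking that the boundary contributions vanish — at $t=0$ because $n(t)\equiv 0$ near the origin, at $t=\infty$ because $n(t)\log(1+r/t)\to 0$ — this yields
\[ \sum_{n}\log\left(1+\frac{r}{r_{n}}\right)=\int_{0}^{\infty}n(t)\,\frac{r}{t(t+r)}\,dt. \]
I would then split this integral at $t=r$ and apply the two trivial bounds $\frac{r}{t(t+r)}\le\frac{1}{t}$ for $t\in(0,r)$ and $\frac{r}{t(t+r)}\le\frac{r}{t^{2}}$ for $t\in(r,\infty)$, which give respectively $\int_{0}^{r}n(t)\frac{r}{t(t+r)}\,dt\le\int_{0}^{r}\frac{n(t)}{t}\,dt=N(r)$ and $\int_{r}^{\infty}n(t)\frac{r}{t(t+r)}\,dt\le r\int_{r}^{\infty}\frac{n(t)}{t^{2}}\,dt=Q(r)$. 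Adding the two halves gives $\log M(r)\le N(r)+Q(r)$, as claimed.

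The one step that genuinely requires care is the justification of the integration by parts, and in particular that the boundary term at infinity vanishes: this uses $n(t)=o(t)$, which is a direct consequence of $\sum_{n}r_{n}^{-1}<\infty$ — the genus-$0$ condition, as opposed to mere finiteness of order — and the same fact also ensures that $N(r)$ and $Q(r)$ are finite. The degenerate cases, namely $f$ a nonconstant polynomial or $f\equiv 1$ with no zeros at all, are handled verbatim by the same computation, the relevant Stieltjes integrals then being supported on a bounded set or identically zero.
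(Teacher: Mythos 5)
Your proof is correct. The paper does not actually prove this lemma --- it is quoted from the cited reference on entire functions --- and your argument is precisely the standard one found there: Hadamard/genus-zero factorization $f(z)=\prod_n(1-z/z_n)$ with $\sum_n r_n^{-1}<\infty$, the bound $\log M(r)\le\sum_n\log(1+r/r_n)$, conversion to $\int_0^\infty \frac{r\,n(t)}{t(t+r)}\,dt$ by Stieltjes integration by parts, and the split at $t=r$. You also correctly flag the only delicate points: the vanishing boundary term at infinity via $n(t)=o(t)$ (a consequence of $\sum_n r_n^{-1}<\infty$) and the finiteness of $N(r)$ and $Q(r)$, so nothing is missing.
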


\begin{proof}[proof of Thereom \ref{thm:taylorbound}]
	By Theorem \ref{thm:eigenvalues} we have that $\abs{\lambda_{n}}\leq b_{n}=Cn^{-k}$ where $k>1$ is as in Theorem \ref{thm:eigenvalues} counted with multiplicity. To see that we can take $k>1$ notice that we can set $\e = h_{\tope}/3$ then 
	\[ \frac{2c}{(4/3)h_{\tope}}-\frac{1}{2} > \frac{6}{4}-\frac{1}{2} = 1.  \]
	Thus $b_{n}\in \ell^{1}$. Notice
	\[ \abs{\delta(\zeta , T)}=\prod_{n=1}^{\infty}\abs{(1+\zeta \lambda_{n}(T))} \leq \prod_{n=1}^{\infty}1+\abs{\zeta}\abs{\lambda_{n}(T)} \leq \prod_{n=1}^{\infty}1+\abs{\zeta}b_{n}.  \]
	Thus we have that for any $z \in \C$ with $\abs{z}=r$
	\[ \abs{\delta(z,t)}\leq \prod_{n=1}^{\infty}1+rb_{n}. \]
	Define
	\[ f(\zeta) = \prod_{n=1}^{\infty}1+\zeta b_{n} \]
	as $b_{n}\in \ell^{1}$ we have that $f$ is well defined and is an entire function moreover for any $r$
	\[  M_{\delta(\cdot , t)}(r) \leq M_{f}(r).  \]
	So to bound $M_{\delta(\cdot , T)}$ it suffices to bound $M_{f}(r)$. Notice that $f$ is a genus $0$ function. Thus by Lemma \ref{lem:Mbound} we have that 
	\[ \log M_{\delta(\cdot , T)}(r)\leq N_{f}(r) + Q_{f}(r). \]
	Thus to bound $M_{f}(r)$ we must bound $n_{f}(t)$. Notice that the zeros of $f(z)$ are $-1/b_{n}= -b_{n}^{-1}$ and that $\abs{-b_{n}^{-1}} = \abs{b_{n}}^{-1}=C^{-1}n^{k}$
	\begin{gather*}
	Cn^{k} = t\\
	\log C + k \log n =  \log t \\
	\log n  = k^{-1}\log(t/C) \\
	n = \exp \left[  k^{-1}\log(t/C) \right] = \exp \left[  \log(t/C)^{k^{-1}} \right] = (t/C)^{k^{-1}}
	\end{gather*}
	thus $n(t) \leq C't^{1/k}$
	\begin{align*}
	N(r) \leq C'\int_{0}^{r}t^{-1+1/k}dt = kC'r^{1/k}
	\end{align*}
	
	\begin{align*}
	Q(r)\leq C'r \int_{r}^{\infty} t^{-2+1/k}dt = \frac{k}{1-k}C'r (r^{-1+1/k}) = \frac{k}{1-k}C'r^{1/k}
	\end{align*}
	
	Thus
	\[ \log M_{\delta(\cdot , T)}(r) \leq C'' r^{1/k}. \]
	Therefore
	\begin{align*}
	\rho &= \limsup_{r \to \infty}\frac{\log \log M_{\delta(\cdot , T)}}{\log r} \\
	&\leq \limsup_{r \to\infty}\frac{\log C''r^{1/k}}{\log r} \\
	& = \limsup_{r \to \infty}\frac{\log C''}{\log r} + \limsup_{r \to \infty}\frac{1/k\log r}{\log r}\\
	&=k^{-1}
	\end{align*}
	
	Thus
	\[ \limsup_{n \to \infty} \frac{n \log n}{-\log \abs{\alpha_{n}(T)}} \leq k^{-1} \]
	
	For all large $n$ we have that $R=k^{-1}+\e$
	\begin{gather*}
	\frac{n \log n}{-\log \abs{\alpha_{n}(T)}} \leq R\\
	\log n^{n} \leq -R \log \abs{\alpha_{n}(T)} \\
	\log n^{-n/R} \geq \log \abs{\alpha_{n}(T)} \\
	n^{-n/R} \geq \abs{\alpha_{n}(T)}
	\end{gather*}
	Thus
	\[ \abs{\alpha_{n}(T)} \leq n^{-n/(k^{-1}+\e)} \]
	Therefore there exists a constant such that take $\e = 1/k$
	\[ \abs{\alpha_{n}(T)}\leq C n^{-n/(k^{-1}+\e)} = C n^{\frac{-nk}{1+\e k}} = Cn^{\frac{-nk}{2}} \]
	for all $n \geq 1$.
\end{proof}

\section{An Example}\label{sec:example}

In this section we will give an example which demonstrates the effectiveness of using these approximation in practice. The code used to perform these computations can be found at https://github.com/mpiraino/fastApproxLE. Consider the following example:
\[ \varphi = 1/2 \text{ and }\A(x)=\begin{cases}
\begin{bmatrix}2 & 1+2^{-m^{3}}\\ 1+3^{-m^{3}} & 1\end{bmatrix}& x = 0^{m}1\\ \hspace{4em}
\begin{bmatrix}2 & 1\\ 1 & 1\end{bmatrix} & x =0^{\infty}
\end{cases} \]
That is $\mu_{\varphi}$ is the $(1/2, 1/2)$ Bernoulli measure and $\A$ is some cocycles which is not locally constant. Notice that $\var_{n}\A = O(2^{-m^{3}})$ and $\A(x)$ is positive for any $x$ so that it satisfies the assumptions of Theorem \ref{thm:mainconcrete}.

The naive method for estimating $\gamma_{1}(\A , \mu_{\varphi})$ would be to use the approximation
\begin{equation}\label{eq:basicapprox}
\frac{1}{n} \sum_{\abs{I}=n}e^{S_{n}\varphi(x_{I})}\log\norm{\A^{(n)}(x_{I})} 
\end{equation}
where $x_{I}$ is some arbitrary point in $[I]$. The logic being that
\[ \sum_{\abs{I}=n}e^{S_{n}\varphi(x_{I})}\log\norm{\A^{(n)}(x_{I})} \approx \int \log\norm{\A^{(n)}(x)}d\mu_{\varphi}. \]
In practice the quality of this approximation is dependent on the choice of norm, so we have used a few common norms. The following table summarizes the results of our computations. We have highlighted the number of digits for which the approximation appears to be accurate.

\begin{center}
	\begin{tabular}{l | c | c | c | c}
		max period	&   Theorem \ref{thm:mainconcrete} & Equation \eqref{eq:basicapprox} $\norm{\cdot}_{2}$ &   Equation \eqref{eq:basicapprox} $\norm{\cdot}_{1}$ &   Equation \eqref{eq:basicapprox} $\norm{\cdot}_{\infty}$ \\ \hline
		1          &  \colorbox{Mycolor1}{1.}09308925851915 &  \colorbox{Mycolor1}{1.1}2771487662921 &  \colorbox{Mycolor1}{1.}77767403074471 &  0.693147180559945 \\
		2          &   \colorbox{Mycolor1}{1.113}99675194920 & \colorbox{Mycolor1}{1.11}501540995010 &  \colorbox{Mycolor1}{1.}44557108726526 &  0.909049799071256 \\
		3          &  \colorbox{Mycolor1}{1.11336}708955451 & \colorbox{Mycolor1}{1.11}435697806841 &  \colorbox{Mycolor1}{1.}33483695545302 &  0.977223409851798 \\
		4          &  \colorbox{Mycolor1}{1.11336692026}619 &  \colorbox{Mycolor1}{1.11}410727056611 &  \colorbox{Mycolor1}{1.}27946945445769 &   \colorbox{Mycolor1}{1.}01126018442876 \\
		5          &  \colorbox{Mycolor1}{1.11336692026723} &  \colorbox{Mycolor1}{1.113}95915553119 &  \colorbox{Mycolor1}{1.}24624894772951 &   \colorbox{Mycolor1}{1.}03168154423466 \\
		6          &  \colorbox{Mycolor1}{1.11336692026723} &  \colorbox{Mycolor1}{1.113}86044871997 &  \colorbox{Mycolor1}{1.}22410194315408 &   \colorbox{Mycolor1}{1.}04529577375891 \\
		7          &  \colorbox{Mycolor1}{1.11336692026723} &  \colorbox{Mycolor1}{1.113}78994463571 &  \colorbox{Mycolor1}{1.}20828265417027 &    \colorbox{Mycolor1}{1.}05502022326290 \\
		8          &  \colorbox{Mycolor1}{1.11336692026723} & \colorbox{Mycolor1}{1.113}73706658924 &  \colorbox{Mycolor1}{1.1}9641818743239 &   \colorbox{Mycolor1}{1.}06231356038848 \\
		9          &  \colorbox{Mycolor1}{1.11336692026723} & \colorbox{Mycolor1}{1.113}69593922012 &  \colorbox{Mycolor1}{1.1}8719026885848 &   \colorbox{Mycolor1}{1.}06798615593057 \\
		10         &  \colorbox{Mycolor1}{1.11336692026723} & \colorbox{Mycolor1}{1.113}66303732483 &  \colorbox{Mycolor1}{1.1}7980793399935 &   \colorbox{Mycolor1}{1.}07252423236423 \\
		11         &  \colorbox{Mycolor1}{1.11336692026723} & \colorbox{Mycolor1}{1.113}63611759232 &  \colorbox{Mycolor1}{1.1}7376784184189 &   \colorbox{Mycolor1}{1.}07623720399178 \\
		12         &  \colorbox{Mycolor1}{1.11336692026723} & \colorbox{Mycolor1}{1.113}61368448190 &  \colorbox{Mycolor1}{1.1}6873443171067 &   \colorbox{Mycolor1}{1.}07933134701473
	\end{tabular}
\end{center}

We can see that the approximations from Theorem \ref{thm:mainconcrete} appear to be accurate to about 15 digits using periodic points of length $\leq 5$ whereas even using cylinders of length 12 the approximation in Equation \eqref{eq:basicapprox} only appears to be accurate to about 4 digits.

\section{Appendix}\label{sec:appendix}

We collect a number of technical results in this appendix which we use throughout the paper. We also include some proofs which we have deferred in our exposition.

\begin{lemma}\label{lem:analyticinequalities}
	Let $U \subseteq \C^{k}$ be open and $f:U \to \C$ analytic.
	\begin{enumerate}
		\item 
		Suppose that $B(z, \e )\subseteq U$ then 
		\[ \abs{f(z)}\leq \sqrt{\frac{k!}{\pi^{k}\e^{k}}}\norm{f}_{A^{2}(U)}. \]
		
		\item 
		Suppose that $K \subseteq U$ is compact and convex. There exists a constant $C_{K}$ depending only on $K$ such that
		\[ \abs{f(z_{1})-f(z_{2})}\leq C_{K} \norm{z_{1}-z_{2}}\norm{f}_{A^{2}(U)} \]
		for all $z_{1},z_{2}\in K$.
	\end{enumerate}
\end{lemma}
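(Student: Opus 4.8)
The plan is to prove the two estimates by standard several-complex-variables mean-value arguments, reducing everything to the elementary fact that an analytic function equals the average of its values over a polydisc (or ball), and then controlling that average by Cauchy--Schwarz against the $L^2$ norm.

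For part (1), I would start from the sub-mean-value property: if $B(z,\e)\subseteq U$ then $f(z)$ is the average of $f$ over the ball $B(z,\e)$, so
\[
|f(z)| \le \frac{1}{\mathrm{vol}(B(z,\e))}\int_{B(z,\e)}|f(w)|\,dm(w),
\]
where $dm$ is Lebesgue measure on $\C^k\cong\R^{2k}$. Applying Cauchy--Schwarz on $B(z,\e)$ gives $|f(z)|\le \mathrm{vol}(B(z,\e))^{-1/2}\|f\|_{A^2(B(z,\e))}\le \mathrm{vol}(B(z,\e))^{-1/2}\|f\|_{A^2(U)}$. It then remains to plug in $\mathrm{vol}(B(z,\e)) = \pi^k\e^{2k}/k!$ (the volume of the Euclidean ball in $\R^{2k}$ of radius $\e$) to obtain the stated constant $\sqrt{k!/(\pi^k\e^{2k})}$; I note the exponent on $\e$ in the statement should be $2k$ rather than $k$, or else $\e$ must be interpreted as the radius in the complex sense — in any case the constant is $\mathrm{vol}(B(z,\e))^{-1/2}$.

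For part (2), fix $z_1,z_2\in K$ and let $\e>0$ be small enough that the $\e$-neighbourhood of $K$ is contained in $U$ (such $\e$ exists by compactness of $K$ and openness of $U$); by convexity the segment $[z_1,z_2]$ lies in $K$. Parametrise $\gamma(t) = (1-t)z_1 + tz_2$ and write $f(z_2)-f(z_1) = \int_0^1 \frac{d}{dt}f(\gamma(t))\,dt = \int_0^1 \langle \nabla f(\gamma(t)), z_2-z_1\rangle\,dt$, so that $|f(z_2)-f(z_1)| \le \|z_1-z_2\|\sup_{w\in K}\|\nabla f(w)\|$. The gradient components $\partial f/\partial z_j$ are themselves analytic, and each is bounded on $K$ by its $A^2$ norm over a fixed $\e/2$-neighbourhood $K_{\e/2}$ of $K$ via part (1) applied at radius $\e/2$; moreover by Cauchy's estimate in one variable at a time, $\|\partial f/\partial z_j\|_{A^2(K_{\e/2})} \le C_\e \|f\|_{A^2(K_\e)} \le C_\e\|f\|_{A^2(U)}$. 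Collecting constants (all depending only on $K$, through $\e$ and $k$) yields $|f(z_1)-f(z_2)|\le C_K\|z_1-z_2\|\,\|f\|_{A^2(U)}$.

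The only mildly delicate point is the passage from an $A^2$ bound on $f$ to an $A^2$ (or sup) bound on its derivatives with a constant independent of $f$ — i.e. the Cauchy-estimate step — but this is routine: differentiation under the integral in the reproducing-kernel representation, or a one-variable Cauchy integral in each coordinate on a slightly smaller polydisc, does it, and the constant depends only on the distance from $K$ to $\partial U$ and on the dimension $k$. No genuine obstacle is expected; the lemma is purely a packaging of classical Bergman-space estimates in the form needed for Lemma \ref{lem:variationinequality}.
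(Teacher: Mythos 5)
Your proposal is correct, and the two parts are handled by approaches that overlap with the paper's on part~(1) but genuinely diverge on part~(2). For part~(1) the paper applies the mean-value property directly to the analytic function $f^2$ rather than to $|f|$ followed by Cauchy--Schwarz; both give $|f(z)|\le \mathrm{vol}(B(z,\e))^{-1/2}\|f\|_{A^2(U)}$, and your observation that the exponent in the displayed constant should be $\e^{2k}$ rather than $\e^{k}$ (the volume of a ball of radius $\e$ in $\R^{2k}$ being $\pi^k\e^{2k}/k!$) correctly flags a typo in the statement. For part~(2) the paper does not estimate $\nabla f$ at all: it restricts $f$ to the complex line through $z_1,z_2$, obtaining a one-variable analytic function $\bar f$ on a thin complex neighbourhood of the real segment, and then represents the difference $\bar f(b)-\bar f(a)$ directly by the partial-fractions Cauchy integral
\[
\bar f(b)-\bar f(a)=\frac{b-a}{2\pi i}\int_\gamma \frac{\bar f(w)}{(w-b)(w-a)}\,dw ,
\]
bounding $|\bar f|$ on $\gamma$ via part~(1). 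Your route instead parametrises the real segment, applies the fundamental theorem of calculus, and controls $\sup_K\|\nabla f\|$ by a one-variable Cauchy estimate in each coordinate followed by part~(1) on a shrunken neighbourhood. Both produce a constant depending only on $\dist(K,\partial U)$, $\diam K$, and $k$, so the conclusion is the same. The paper's contour trick avoids any mention of derivatives and gets the Lipschitz constant in a single step; your argument is slightly longer but more modular and avoids the awkward line-parametrisation (the paper's $z_t=tz_1+(\norm{z_1-z_2}-t)z_2$ as written only traces the segment when $\norm{z_1-z_2}=1$, evidently a typo you sidestep entirely). One small tightening you could make: rather than bounding $\|\partial f/\partial z_j\|_{A^2(K_{\e/2})}$ and then applying part~(1) to $\partial f/\partial z_j$, it is simpler to bound $\sup_K|\partial_j f|$ pointwise by the one-variable Cauchy estimate $\sup_K|\partial_j f|\le (2/\e)\sup_{K_{\e/2}}|f|$ and then apply part~(1) once to $f$ itself; this avoids any $A^2$ estimate on the derivatives.
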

\begin{proof}
	\begin{enumerate}
		\item 
		This result is well known (see for instance \cite{morris2018fast}). Notice
		\begin{align*}
		\abs{f(z_{0})}^{2}&=\abs{\frac{1}{V(B(z_{0},\e))}\int_{B(z_{0},\e)}f(z)^{2}dV(z)}\\
		&\leq \frac{1}{V(B(z_{0},\e))}\int_{B(z_{0},\e)}\abs{f(z)}^{2}dV(z)\\
		&\leq \frac{1}{V(B(z_{0},\e))}\int_{U}\abs{f(z)}^{2}dV(z)\\
		&= \frac{k!}{\pi^{k} \e^{k}}\norm{f}_{A^{2}(U)}^{2}.
		\end{align*}
		
		\item 
		Take $\e$ such that $B(z,\e)\subseteq U$ for all $z \in K$. Let $z_{1},z_{2}\in K$ and set $z_{t} = tz_{1}+(\norm{z_{1}-z_{2}}-t)z_{2}$ and define the function $\ol{f}(t)=f(z_{t})$. We claim that $\ol{f}$ is analytic on the set $\bigcup_{0\leq t \leq \norm{z_{1}-z_{2}}}B(t,\e/4S) \subseteq \C$ were $S= \sup_{z \in K}\norm{z}$. To see this notice that for any $w \in \bigcup_{0\leq t \leq \norm{z_{1}-z_{2}}}B(t,\e/4S)$ there exists a $t \in [0, \norm{z_{1}-z_{2}}]$ such that
		\[ \norm{z_{t}-z_{w}}\leq 2 S \abs{t-w} < \e/2. \]
		Thus $z_{w} \in B(z_{t}, \e/2) \subset U$. Thus $\ol{f}$ is a composition of $t \mapsto tz_{1}+(\norm{z_{1}-z_{2}}-t)z_{2}$ and $f$ both of which are analytic. 
		
		Let $\gamma$ be the boundary of the set $\bigcup_{0\leq t \leq \norm{z_{1}-z_{2}}}B(t,\e/4S)$. The $\gamma$ is a piecewise smooth curve and by the Cauchy integral formula we have that
		\begin{align*}
		f(z_{1})-f(z_{2})= \ol{f}(\norm{z_{1}-z_{2}})-\ol{f}(0) &=\frac{\norm{z_{1}-z_{2}}}{2 \pi i}\int_{\gamma}\frac{\ol{f}(w)}{(w-\norm{z_{1}-z_{2}})w}dw
		\end{align*}
		Notice that for any $w \in \gamma$ we have that $\ol{f}(w)=f(z_{w})$ and that $z_{w} \in \ol{B(z_{t},\e/2)}\subseteq B(z_{t}, \e) \subseteq U$ for some $z_{t}\in K$ thus $\dist(z_{w}, \partial U)\geq \e/2$
		\[ \abs{f(z)}\leq \sqrt{\frac{k!2^{k}}{\pi^{k} \e^{k}}}\norm{f}_{A^{2}(U)}^{2}. \]
		Notice that for any $w \in \gamma$ we have that
		\[ \abs{w} \geq \e/4S \text{ and }\abs{w-\norm{z_{1}-z_{2}}} \geq \e/4S \]
		hence
		\[ \abs{(w-\norm{z_{1}-z_{2}})w} \geq \e^{2}/16S. \]
		We can compute that the length of $\gamma$ is
		\[ 2\norm{z_{1}-z_{2}}+ \frac{\pi \e}{16S} \leq 2 \diam(K)+\pi \e. \]
		Thus
		\begin{align*}
		&\abs{\int_{\gamma}\frac{f(w)}{(w-\norm{z_{1}-z_{2}})w}dw}\\
		&\leq \left(2\norm{z_{1}-z_{2}}+ \frac{\pi \e}{16S}\right) \sup\set{\abs{\frac{f(w)}{(w-\norm{z_{1}-z_{2}})w}}:w \in \gamma}\\
		&\leq \norm{f}_{A^{2}(U)}\frac{16S(2\norm{z_{1}-z_{2}}+ \frac{\pi \e}{16S})}{\e^{2}}\sqrt{\frac{k!2^{k}}{\pi^{k} \e^{k}}}.
		\end{align*}
		Hence the result.
	\end{enumerate}
\end{proof}

\begin{lemma}\label{lem:techbounds}
	Suppose that $\A$ satisfies \textbf{(H1)} and let $\ol{\A(x)}$ be as in Theorem \ref{thm:conesmorris}.
	\begin{enumerate}
		\item 
		There is a constant $C_{\A}$ such that
		\[ \norm{\ol{\A(x)}-\ol{\A(y)}} \leq C_{\A} \var_{k}\A \]
		
		\item 
		There is a constant $C_{\A}$ such that
		\[ \norm{ \ol{\A_{m}(x)} - \ol{\A(x)} } \leq C_{\A} \var_{m}\A  \]
		
		\item
		There is a constant $C_{\A, \beta}$ such that 
		\[ \abs{\norm{\A(x)\frac{z}{\norm{z}}}^{\beta} - \norm{\A(y)\frac{z}{\norm{z}}}^{\beta}} \leq C_{\A, \beta} \var_{k}\A \]
		
		\item
		There is a constant $C_{\A, \beta}$ such that 
		\[ \abs{\norm{\A(x)\frac{z}{\norm{z}}}^{\beta}-\norm{\A_{m}(x)\frac{z}{\norm{z}}}^{\beta}} \leq C_{\A, \beta}\var_{m}\A \]

		\item 
		For any $f \in \B(U, \kappa_{n})$
		\[ \abs{f(iy,\ol{\A(iy)}z) - f(ix,\ol{\A(ix)}z)}\leq \norm{f}_{\B(U)}(C_{K}+ 1)\kappa_{k+1}. \]
	\end{enumerate}
\end{lemma}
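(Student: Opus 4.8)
The plan is to prove (1)--(4) as Lipschitz-continuity estimates for two nonlinear maps on the compact set $\mathsf{A}=\A(\S_{T}^{+})\subset GL_{d}(\R)$, and to deduce (5) from (1) together with the Cauchy-type bound of Lemma~\ref{lem:analyticinequalities}(2). First I would record the constants supplied by the hypotheses: since $\A$ is continuous on the compact space $\S_{T}^{+}$ with values in $GL_{d}(\R)$, the numbers $m_{1}=\inf_{x}\norm{\A(x)}$ and $m_{2}=\inf_{x}\sigma_{d}(\A(x))$ are positive and $M=\sup_{x}\norm{\A(x)}<\infty$, while $R=\sup_{z\in U}\norm{z}<\infty$ because $U$ is bounded (Theorem~\ref{thm:conesmorris}(3)). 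Moreover, if $x_{i}=y_{i}$ for $0\le i\le k-1$ then $\norm{\A(x)-\A(y)}\le\var_{k}\A$, and $\A_{m}(x)=\A(x')\in\mathsf{A}$ for a point $x'$ agreeing with $x$ on its first $m$ symbols, so $\norm{\A(x)-\A_{m}(x)}\le\var_{m}\A$. Thus in each of (1)--(4) it suffices to establish a Lipschitz bound on $\mathsf{A}$ and then substitute; and in (1)--(2) the norm $\norm{\ol{\A(x)}-\ol{\A(y)}}$ is read as $\sup_{z\in U}\norm{\ol{\A(x)}z-\ol{\A(y)}z}$, which is how it is used elsewhere.

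For (1)--(2): writing $a=\inn{Az,w}$ and $b=\inn{Bz,w}$ for $A,B\in\mathsf{A}$ and $z\in U$, a short computation gives
\[ \ol{A}z-\ol{B}z=\frac{Az-Bz}{a}+\frac{\inn{(B-A)z,w}}{ab}\,Bz. \]
By Theorem~\ref{thm:conesmorris}(\ref{thm:conesmorris/conenorminequality}) we have $\abs{a},\abs{b}\ge C^{-1}m_{1}$, while $\abs{\inn{(B-A)z,w}}\le R\norm{A-B}$ and $\norm{Bz}\le MR$; hence $\norm{\ol{A}z-\ol{B}z}\le(CRm_{1}^{-1}+C^{2}MR^{2}m_{1}^{-2})\norm{A-B}=:C_{\A}\norm{A-B}$ for all $z\in U$, and (1), (2) follow from the two substitutions above. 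For (3)--(4): for any unit vector $u$ and $A\in\mathsf{A}$ one has $m_{2}\le\norm{Au}\le M$, so $t\mapsto t^{\beta}$ is Lipschitz on the range of $z\mapsto\norm{\A(x)z/\norm{z}}$ with a constant $L_{\beta}=\abs{\beta}\max\{m_{2}^{\beta-1},M^{\beta-1}\}$ (and $L_{0}=0$), whence
\[ \Bigl|\,\norm{\A(x)\tfrac{z}{\norm{z}}}^{\beta}-\norm{\A(y)\tfrac{z}{\norm{z}}}^{\beta}\,\Bigr|\le L_{\beta}\,\norm{(\A(x)-\A(y))\tfrac{z}{\norm{z}}}\le L_{\beta}\norm{\A(x)-\A(y)}, \]
giving (3) with $C_{\A,\beta}=L_{\beta}$, and the same estimate with $\A_{m}(x)$ in place of $\A(y)$ gives (4); one also notes that $C_{\A}$ and $L_{\beta}$ depend continuously on $\beta$, as needed for the Remark after Lemma~\ref{lem:uniform_norm}.

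For (5): assume $x_{i}=y_{i}$ for $0\le i\le k-1$, so $ix$ and $iy$ agree on their first $k+1$ symbols and in particular on their first symbol. I would split
\[ \abs{f(iy,\ol{\A(iy)}z)-f(ix,\ol{\A(ix)}z)}&\le\abs{f(iy,\ol{\A(iy)}z)-f(ix,\ol{\A(iy)}z)}\\ &\quad+\abs{f(ix,\ol{\A(iy)}z)-f(ix,\ol{\A(ix)}z)}. \]
The first term only moves the $\S_{T}^{+}$-variable, and its second argument $w=\ol{\A(iy)}z$ lies in $K\subseteq U$, so it is at most $\var_{k+1}f(\cdot,w)\le\abs{f(\cdot,w)}_{\set{\kappa_{n}}}\kappa_{k+1}\le\norm{f}_{\B(U)}\kappa_{k+1}$ (this is the ``$+1$''). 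For the second term, since $ix$ and $iy$ share their first symbol, Proposition~\ref{prop:locallyconstantcones} applied with $N=1$ places $\ol{\A(ix)}z$ and $\ol{\A(iy)}z$ in one convex slice $U_{\ell}$; applying Lemma~\ref{lem:analyticinequalities}(2) to the analytic function $f(ix,\cdot)$ on a fixed compact convex $K'\subseteq U$ containing these images, using $\norm{f(ix,\cdot)}_{A^{2}(U)}\le\operatorname{vol}(U)^{1/2}\norm{f}_{\B(U)}$, and then invoking part (1) of the present lemma, this term is $\le C_{K}C_{\A}\var_{k+1}\A\,\norm{f}_{\B(U)}$; since $\A$ satisfies \textbf{H2}$(\set{\kappa_{n}})$ one has $C_{\A}\var_{k+1}\A\le\kappa_{k+1}$ after absorbing the implied constant into $C_{K}$, and summing gives $(C_{K}+1)\kappa_{k+1}\norm{f}_{\B(U)}$.

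The content of (1)--(4) is routine; the only place that needs care is (5). There the obstacle is bookkeeping rather than analysis: one must use the cone structure of Proposition~\ref{prop:locallyconstantcones} (with $N=1$) to guarantee that $\ol{\A(ix)}z$ and $\ol{\A(iy)}z$ lie in a single convex piece of $U$ before the analytic Cauchy estimate of Lemma~\ref{lem:analyticinequalities}(2) can be applied uniformly, and one must notice that passing from the natural displacement bound $\norm{\ol{\A(ix)}z-\ol{\A(iy)}z}\le C_{\A}\var_{k+1}\A$ of (1) to a bound in terms of $\kappa_{k+1}$ is precisely where the regularity hypothesis \textbf{H2}$(\set{\kappa_{n}})$ on $\A$ enters.
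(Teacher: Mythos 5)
Your proposal is correct and follows essentially the same route as the paper: (1)--(2) via an add-and-subtract estimate for $\ol{A}z$ using the bound $\abs{\inn{Az,w}}\geq C^{-1}\norm{A}$ from Theorem \ref{thm:conesmorris}, (3)--(4) via the Lipschitz constant of $t\mapsto t^{\beta}$ on $[\inf_x\sigma_d(\A(x)),\sup_x\norm{\A(x)}]$, and (5) by the same two-term triangle-inequality split (merely in the opposite order), bounding one term by $\var_{k+1}f$ and the other by Lemma \ref{lem:analyticinequalities}(2) together with part (1) and \textbf{H2}. Your extra remarks on the convex slice $U_{\ell}$ and on absorbing $\abs{\A}_{\set{\kappa_n}}$ and $\operatorname{vol}(U)^{1/2}$ into the constants only make explicit what the paper's proof leaves implicit.
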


\begin{proof}
	\begin{enumerate}
		\item
		
		Notice that
		\begin{align*}
		&\norm{\inn{\A(iy)z,w}^{-1}\A(iy)z- \inn{\A(ix)z,w}^{-1}\A(ix)z }\\
		&\leq \norm{\inn{\A(iy)z,w}^{-1}\A(iy)- \inn{\A(ix)z,w}^{-1}\A(ix)}\\
		&=\norm{\frac{\A(iy)\inn{\A(ix)z,w} -\A(ix)\inn{\A(iy)z,w}}{\inn{\A(ix)z,w}\inn{\A(iy)z,w}}}\\
		&=\norm{\frac{\A(iy)\inn{\A(ix)z,w} -\A(iy)\inn{\A(iy)z,w}+\A(iy)\inn{\A(iy)z,w}-\A(ix)\inn{\A(iy)z,w}}{\inn{\A(ix)z,w}\inn{\A(iy)z,w}}}\\
		&\leq \inn{\A(iy)z,w}^{-1}\inn{\A(ix)z,w}^{-1} (\inn{(\A(ix)-\A(iy))z,w}\norm{\A(iy)}\\
		&\;\;\;\;\;\;\;\;\;\;+\inn{\A(iy)z,w}\norm{\A(iy)-\A(ix)})\\
		&\leq C\left(\frac{\norm{\A(iy)}}{\sigma_{d}(\A(iy))\sigma_{d}(\A(ix))}+\sigma_{d}(\A(ix))^{-1} \right)\norm{\A(ix)-\A(iy)}\\
		&\leq C\left(\frac{\sup_{x}\norm{\A(x)}}{(\inf_{x}\sigma_{d}(\A(x)))^{2}} + (\inf_{x}\sigma_{d}(\A(x)))^{-1} \right)\var_{k+1}\A.
		\end{align*}
		
		\item 
		Follows from (1).
		
		\item 
		\begin{align*}
		&\abs{\norm{\A(x)\frac{z}{\norm{z}}}^{\beta} - \norm{\A(y)\frac{z}{\norm{z}}}^{\beta}}\\
		&\leq \beta \max\set{(\inf_{x}\sigma_{d}(\A(x)))^{\beta -1}, (\sup_{x}\norm{\A(x)})^{\beta -1}}	\abs{\norm{\A(x)\frac{z}{\norm{z}}} - \norm{\A(y)\frac{z}{\norm{z}}}}\\
		&\leq \beta \max\set{(\inf_{x}\sigma_{d}(\A(x)))^{\beta -1}, (\sup_{x}\norm{\A(x)})^{\beta -1}}\norm{\A(x)-\A(y)}\\
		&\leq \beta \max\set{(\inf_{x}\sigma_{d}(\A(x)))^{\beta -1}, (\sup_{x}\norm{\A(x)})^{\beta -1}} \var_{k}\A.
		\end{align*}
		
		\item 
		Follows from (3).
		
		\item 
		Notice
		\begin{align*}
		&\abs{f(iy,\ol{\A(iy)}z) - f(ix,\ol{\A(ix)}z)} \\
		&\leq \abs{f(iy,\ol{\A(iy)}z) - f(iy,\ol{\A(ix)}z)} +\abs{f(iy,\ol{\A(ix)}z) - f(ix,\ol{\A(ix)}z)} \\
		&\leq C_{K}\norm{f(iy, \cdot)}_{A^{2}(U)}\norm{\ol{\A(ix)}z-\ol{\A(iy)}z}+ \var_{k+1}f(\cdot , \ol{\A(ix)}z)\\
		&\leq \norm{f}_{\B(U)}(C_{K}+ 1)\kappa_{k+1}.
		\end{align*}
	\end{enumerate}
\end{proof}

\begin{proof}[Proof of Lemma \ref{lem:convergence}]
	First notice that it suffices to prove the result for $n=1$. We will write $\L_{m,\beta} = \L_{\varphi_{m},\A_{m},\beta}$. Let $f \in \B(U)$. Let $x,y \in \S_{T}^{+}$ with $x_{0}=y_{0}$ and $z \in U$ then
	\begin{align*}
		&\abs{(\L_{m,\beta}-\L_{\beta})f(x,z) -(\L_{m,\beta}-\L_{\beta})f(y,z)}\\
		&= \abs{\L_{m,\beta}f(x,z)-\L_{\beta}f(x,z) -\L_{m,\beta}f(y,z)+\L_{\beta}f(y,z)} \\
		&\leq \sum_{i:ix \in \S_{T}^{+}} \Big| e^{\varphi_{m}(ix)}\norm{\A_{m}(ix)\frac{z}{\norm{z}}}^{\beta}f(ix, \ol{\A_{m}(ix)}z) - e^{\varphi(ix)}\norm{\A(ix)\frac{z}{\norm{z}}}^{\beta}f(ix, \ol{\A(ix)}z)  \\
		& \hspace{7em} - e^{\varphi_{m}(iy)}\norm{\A_{m}(iy)\frac{z}{\norm{z}}}^{\beta}f(iy, \ol{\A_{m}(iy)}z) + e^{\varphi(iy)}\norm{\A(iy)\frac{z}{\norm{z}}}^{\beta}f(iy, \ol{\A(iy)}z) \Big|
	\end{align*}
	for each $i$ we have that
	\begin{align*}
		&\Big| e^{\varphi_{m}(ix)}\norm{\A_{m}(ix)\frac{z}{\norm{z}}}^{\beta}f(ix, \ol{\A_{m}(ix)}z) - e^{\varphi(ix)}\norm{\A(ix)\frac{z}{\norm{z}}}^{\beta}f(ix, \ol{\A(ix)}z)  \\
		& \hspace{7em} - e^{\varphi_{m}(iy)}\norm{\A_{m}(iy)\frac{z}{\norm{z}}}^{\beta}f(iy, \ol{\A_{m}(iy)}z) + e^{\varphi(iy)}\norm{\A(iy)\frac{z}{\norm{z}}}^{\beta}f(iy, \ol{\A(iy)}z) \Big|
	\end{align*}
	Can be bounded above by the sum of the following terms:
	\begin{eqnarray}
		&\hspace{2em}\abs{e^{\varphi(iy)}\norm{\A(iy)\frac{z}{\norm{z}}}^{\beta}-e^{\varphi_{m}(iy)}\norm{\A_{m}(iy)\frac{z}{\norm{z}}}^{\beta}} \cdot \abs{f(iy,\ol{\A(iy)}z) - f(ix,\ol{\A(ix)}z)} \label{eq:convergence1}\\
		&\abs{f(ix,\ol{\A(ix)}z)} \times \Big| -e^{\varphi(ix)}\norm{\A(ix)\frac{z}{\norm{z}}}^{\beta} + e^{\varphi_{m}(ix)}\norm{\A_{m}(ix)\frac{z}{\norm{z}}}^{\beta} \label{eq:convergence2} \\
		&\hspace{15em}- \left( -e^{\varphi(iy)}\norm{\A(iy)\frac{z}{\norm{z}}}^{\beta} + e^{\varphi_{m}(iy)}\norm{\A_{m}(iy)\frac{z}{\norm{z}}}^{\beta}\right) \Big| \nonumber \\
		&e^{\varphi_{m}(iy)}\norm{\A_{m}(iy)\frac{z}{\norm{z}}}^{\beta} \times \Big| f(iy,\ol{A(iy)}z) - f(iy, \ol{\A_{m}(iy)}z)\label{eq:convergence3} \\
		&\hspace{18em} -\left(  f(ix,\ol{A(ix)}z) - f(ix, \ol{\A_{m}(ix)}z)  \right) \Big| \nonumber \\
		&\abs{e^{\varphi_{m}(ix)}\norm{\A_{m}(ix)\frac{z}{\norm{z}}}^{\beta} -e^{\varphi_{m}(iy)}\norm{\A_{m}(iy)\frac{z}{\norm{z}}}^{\beta} } \label{eq:convergence4} \\
		&\times \abs{f(ix, \ol{\A_{m}(ix)}z) - f(ix, \ol{\A(ix)}z)} \nonumber
	\end{eqnarray}
	Let $k\geq 1$. Suppose that $x_{i}=y_{i}$ for $0\leq i \leq k-1$. First we bound \eqref{eq:convergence1}. Notice that
	
	\begin{align*}
		&\abs{e^{\varphi(iy)}\norm{\A(iy)\frac{z}{\norm{z}}}^{\beta}-e^{\varphi_{m}(iy)}\norm{\A_{m}(iy)\frac{z}{\norm{z}}}^{\beta}} \\
		&\leq  \norm{e^{\varphi(x)}\norm{\A(x)\frac{z}{\norm{z}}}^{\beta}-e^{\varphi_{m}(x)}\norm{\A_{m}(x)\frac{z}{\norm{z}}}^{\beta}}_{\infty} \\
		&\leq C\kappa_{m}
	\end{align*}
	Thus there exists a constant $C>0$ such that for all $k\geq 1$ and all  $x,y$ with $x_{i}=y_{i}$ for $0\leq i \leq k-1$ we have that \eqref{eq:convergence1} can be bounded above by $C\norm{f}_{\B(U)}\kappa_{k+1}\kappa_{m}$.
	
	Next we bound \eqref{eq:convergence2}. If $k+1\geq m$ then 
	\begin{align*}
		&\Big| -e^{\varphi(ix)}\norm{\A(ix)\frac{z}{\norm{z}}}^{\beta} + e^{\varphi_{m}(ix)}\norm{\A_{m}(ix)\frac{z}{\norm{z}}}^{\beta}\\
		&\hspace{5em}- \left( -e^{\varphi(iy)}\norm{\A(iy)\frac{z}{\norm{z}}}^{\beta} + e^{\varphi_{m}(iy)}\norm{\A_{m}(iy)\frac{z}{\norm{z}}}^{\beta}\right) \Big| \\
		&= \abs{e^{\varphi(iy)}\norm{\A(iy)\frac{z}{\norm{z}}}^{\beta} - e^{\varphi(ix)}\norm{\A(ix)\frac{z}{\norm{z}}}^{\beta}}\\
		&\leq  \abs{e^{\varphi(iy)}\norm{\A(iy)\frac{z}{\norm{z}}}^{\beta} - e^{\varphi(iy)}\norm{\A(ix)\frac{z}{\norm{z}}}^{\beta}}+ \abs{e^{\varphi(iy)}\norm{\A(ix)\frac{z}{\norm{z}}}^{\beta} - e^{\varphi(ix)}\norm{\A(ix)\frac{z}{\norm{z}}}^{\beta}}\\
		&\leq e^{\varphi(iy)}\abs{\norm{\A(iy)\frac{z}{\norm{z}}}^{\beta} -\norm{\A(ix)\frac{z}{\norm{z}}}^{\beta}} + \norm{\A(ix)\frac{z}{\norm{z}}}^{\beta} \abs{e^{\varphi(iy)}- e^{\varphi(ix)}}\\
		&\leq C\kappa_{k+1}.
	\end{align*}
	As $k+1\geq m$ we have that
	\begin{align*}
		\kappa_{k+1}&= \frac{\kappa_{k+1}}{\kappa_{k}}\kappa_{k}\\
		&= e^{-c[(k+1)^{p} - k^{p}]}\kappa_{k}\\
		&\leq  e^{-cpk^{p-1}}\kappa_{k}\\
		&\leq e^{-cp(m-1)^{p-1}}\kappa_{k}
	\end{align*}
	for some $C>0$. If $k+1<m$ then 
	\begin{align*}
		&\Big| -e^{\varphi(ix)}\norm{\A(ix)\frac{z}{\norm{z}}}^{\beta} + e^{\varphi_{m}(ix)}\norm{\A_{m}(ix)\frac{z}{\norm{z}}}^{\beta}\\
		&\hspace{5em}- \left( -e^{\varphi(iy)}\norm{\A(iy)\frac{z}{\norm{z}}}^{\beta} + e^{\varphi_{m}(iy)}\norm{\A_{m}(iy)\frac{z}{\norm{z}}}^{\beta}\right) \Big| \\
		&\leq 2\norm{ e^{\varphi_{m}(x)}\norm{\A_{m}(x)\frac{z}{\norm{z}}}^{\beta}-e^{\varphi(x)}\norm{\A(x)\frac{z}{\norm{z}}}^{\beta}}_{\infty}\\
		&\leq 2 C \kappa_{m}.
	\end{align*}
	As $k+1<m$ we have that
	\begin{align*}
		\kappa_{m}&=\frac{\kappa_{m}}{\kappa_k}\kappa_{k}\\
		&\leq  \frac{\kappa_{m}}{\kappa_{m-1}}\kappa_{k}\\
		&\leq  e^{-c[m^{p}-(m-1)^{p}]}\kappa_{k}\\
		&\leq e^{-cp(m-1)^{p-1}}\kappa_{k}.
	\end{align*}
	Thus there exists a constant $C>0$ such that for all $k\geq 1$ and all  $x,y$ with $x_{i}=y_{i}$ for $0\leq i \leq k-1$ we have that \eqref{eq:convergence2} can be bounded above by $C\norm{f}_{\B(U)}\kappa_{k}e^{-cp(m-1)^{p-1}}$.
	
	Next we bound \eqref{eq:convergence3}. Suppose that $k+1\geq m$ then
	\[ \ol{\A_{m}(iy)}=\ol{\A_{m}(ix)}  \]
	and thus
	\begin{align*}
		&\abs{ f(iy,\ol{\A(iy)}z) - f(iy, \ol{\A_{m}(iy)}z) -\left(  f(ix,\ol{A(ix)}z) - f(ix, \ol{\A_{m}(ix)}z)  \right)}\\
		&\leq \abs{ f(iy,\ol{\A(iy)}z) - f(ix,\ol{\A(ix)}z)} +\abs{f(ix, \ol{\A_{m}(iy)}z)-f(iy, \ol{\A_{m}(iy)}z)}\\
		&\leq \norm{f}_{\B(U)}(C_{K}+ 1)\kappa_{k+1} + \norm{f}_{\B(U)}\kappa_{k+1}
	\end{align*}
	If on the other hand $k+1<m$ then 
	\begin{align*}
		&\abs{ f(iy,\ol{\A(iy)}z) - f(iy, \ol{\A_{m}(iy)}z) -\left(  f(ix,\ol{A(ix)}z) - f(ix, \ol{\A_{m}(ix)}z)  \right)}\\
		&\leq 2\norm{  f(x,\ol{\A(x)}z) - f(x, \ol{\A_{m}(x)}z)}_{\infty}\\
		&\leq 2 C  \norm{f}_{\B(U)}\sup_{x}\norm{\ol{\A(x)}-\ol{\A_{m}(x)}}\\
		&\leq 2 C \norm{f}_{\B(U)} \var_{m}\A \\
		&\leq 2 C \norm{f}_{\B(U)} \abs{\A}_{\set{\kappa_{n}}} \kappa_{m}
	\end{align*}
	Following the same argument from bounding \eqref{eq:convergence2} we find that there exists a constant $C>0$ such that for all $k\geq 1$ and all  $x,y$ with $x_{i}=y_{i}$ for $0\leq i \leq k-1$ we have that \eqref{eq:convergence3} can be bounded above by $C\norm{f}_{\B(U)}\kappa_{k}e^{-cp(m-1)^{p-1}}$.
	
	Finally we bound \eqref{eq:convergence4}. First notice that if $k+1\geq m$ then 
	\[ \ol{\A_{m}(iy)}=\ol{\A_{m}(ix)}  \text{ and } \varphi_{m}(ix)=\varphi(iy) \]
	thus \eqref{eq:convergence4} is $0$. If $k+1<m$ then 
	\begin{align*}
		\abs{f(ix, \ol{\A_{m}(ix)}z) - f(ix, \ol{\A(ix)}z)} \leq C \norm{f}_{\B(U)} \abs{\A}_{\set{\kappa_{n}}} \kappa_{m}
	\end{align*}
	Following the same argument from bounding \eqref{eq:convergence2} we find that there exists a constant $C>0$ such that for all $k\geq 1$ and all  $x,y$ with $x_{i}=y_{i}$ for $0\leq i \leq k-1$ we have that \eqref{eq:convergence4} can be bounded above by $C\norm{f}_{\B(U)}\kappa_{k}e^{-cp(m-1)^{p-1}}$.
	
	Putting these together we find that there exists a constant $C$ such that
	\[ \abs{\L_{m,\beta}f-\L_{\beta}f}_{\set{\kappa_{n}}} \leq C \norm{f}_{\B(U)} e^{-cp(m-1)^{p-1}}. \]
	Next we need to bound the uniform norm. Let $x  \in \S_{T}^{+}$ and $z \in U$ then
	\begin{align*}
		&\abs{\L_{m,\beta}f(x,z) - \L_{\beta}f(x,z)}\\
		&\leq \sum_{i:ix \in \S_{T}^{+}} \abs{ e^{\varphi_{m}(ix)}\norm{\A_{m}(ix)\frac{z}{\norm{z}}}^{\beta}f(ix, \ol{\A_{m}(ix)}z) - e^{\varphi(ix)}\norm{\A(ix)\frac{z}{\norm{z}}}^{\beta}f(ix, \ol{\A(ix)}z) }
	\end{align*}
	Notice that
	\begin{align*}
		&\abs{ e^{\varphi_{m}(ix)}\norm{\A_{m}(ix)\frac{z}{\norm{z}}}^{\beta}f(ix, \ol{\A_{m}(ix)}z) - e^{\varphi(ix)}\norm{\A(ix)\frac{z}{\norm{z}}}^{\beta}f(ix, \ol{\A(ix)}z) }\\
		&\leq \abs{f(ix, \ol{\A_{m}(ix)}z)}\abs{ e^{\varphi_{m}(ix)}\norm{\A_{m}(ix)\frac{z}{\norm{z}}}^{\beta}- e^{\varphi(ix)}\norm{\A(ix)\frac{z}{\norm{z}}}^{\beta} }\\
		&\hspace{5em}+e^{\varphi(ix)}\norm{\A(ix)\frac{z}{\norm{z}}}^{\beta}\abs{ f(ix, \ol{\A_{m}(ix)}z) - f(ix, \ol{\A(ix)}z) }\\
		&\leq \norm{f}_{\B(U)}\abs{ e^{\varphi_{m}(ix)}\norm{\A_{m}(ix)\frac{z}{\norm{z}}}^{\beta}- e^{\varphi(ix)}\norm{\A(ix)\frac{z}{\norm{z}}}^{\beta} }\\
		&\hspace{5em}+e^{\varphi(ix)}\norm{\A(ix)\frac{z}{\norm{z}}}^{\beta}\norm{f}_{\B(U)}\norm{ \ol{\A_{m}(ix)}z - \ol{\A(ix)}z }\\
		&\leq C \norm{f}_{\B(U)} \kappa_{m}
	\end{align*}
	for some $C>0$. Putting all of this together we arrive at the conclusion that 
	\[ \norm{\L_{m,\beta} - \L_{\beta}}_{\B(U),\op} \leq C e^{-cp(m-1)^{p-1}}. \]
\end{proof}

\bibliography{MatrixTransferOpBib}{}
\bibliographystyle{abbrv}

\end{document}